\documentclass[a4paper,11pt]{amsart}
\usepackage{graphicx} % Required for inserting images
\usepackage{amsmath,amssymb,amsthm}
\usepackage{paralist}
\usepackage{quiver}
\usepackage{framed}

\let\oldtocsection=\tocsection
\let\oldtocsubsection=\tocsubsection
\let\oldtocsubsubsection=\tocsubsubsection
 
\renewcommand{\tocsection}[2]{\hspace{0em}\oldtocsection{#1}{#2}\bfseries}
\renewcommand{\tocsubsection}[2]{\hspace{1.8em}\oldtocsubsection{#1}{#2}}
\renewcommand{\tocsubsubsection}[2]{\hspace{4.4em}\oldtocsubsubsection{#1}{#2}}
\makeatletter
\renewcommand\subsection{\@startsection{subsection}{2}%
  \z@{-.5\linespacing\@plus-.7\linespacing}{.5\linespacing}%
  {\normalfont\scshape}}
\renewcommand\subsubsection{\@startsection{subsubsection}{3}%
  \z@{.5\linespacing\@plus.7\linespacing}{.5\linespacing}%
  {\normalfont\scshape}}

\makeatother
\usepackage[framemethod=tikz]{mdframed}

\usepackage[pdftex]{hyperref}
\hypersetup{
    colorlinks=true, %set true if you want colored links
    linktoc=all,     %set to all if you want both sections and subsections linked
    linkcolor=blue,  %choose some color if you want links to stand out
    allcolors=blue,
}

\newcommand{\bb}[1]{\mathbb{#1}}
\newcommand{\cl}[1]{\mathcal{#1}}
\newcommand{\fk}[1]{\mathfrak{#1}}

\newcommand{\deff}[1]{\emph{#1}}

\newcommand{\N}{\bb{N}}
\newcommand{\M}{\bb{M}}

\newcommand{\1}{1_{\B}}
\newcommand{\0}{0_{\B}}

\newcommand{\FPHP}{F^0_{\mathrm{PHP}}}
\newcommand{\GPHP}{G^0_{\mathrm{PHP}}}

\newcommand{\Ffull}{F_{\mathrm{PHP}}}
\newcommand{\Gfull}{G_{\mathrm{PHP}}}

\newcommand{\OPHP}{\Omega^0_{\mathrm{PHP}}}
\newcommand{\PHP}{\mathrm{PHP}}
\newcommand{\ontoPHP}{\mathrm{ontoPHP}}
\newcommand{\ontoFPHP}{\mathrm{ontoFPHP}}

\renewcommand{\Mc}{\cl{M}}
\newcommand{\A}{\cl{A}}
\newcommand{\Ic}{\cl{I}}
\newcommand{\B}{\cl{B}}
\newcommand{\RR}{\bb{R}}
\newcommand{\abs}[1]{\lvert #1\rvert}
\def\llbracket{{[\![}}
\def\rrbracket{{]\!]}}
\newcommand{\bbl}{\llbracket}
\newcommand{\bbr}{\rrbracket}
\newcommand{\Land}{\bigwedge}
\newcommand{\Lor}{\bigvee}

\newcommand{\forcingFrame}{(P, \preceq)}
\newcommand{\forcingAlgebra}{\cl{B}(\Vdash)}
\newcommand{\Th}{{\sf{Th}}}

\newcommand{\ZF}{\mathrm{ZF}}
\newcommand{\GB}{\mathrm{GB}}
\newcommand{\ACA}{\mathrm{ACA}_0}
\newcommand{\PA}{\mathrm{PA}}

\theoremstyle{definition}
\newtheorem{theorem}{Theorem}[section]
\newtheorem{definition}[theorem]{Definition}
\newtheorem{lemma}[theorem]{Lemma}
\newtheorem{corollary}[theorem]{Corollary}
\newtheorem{proposition}[theorem]{Proposition}
\newtheorem{remark}[theorem]{Remark}

\newtheorem{problem}[theorem]{Problem}

\newtheorem*{theorem*}{Theorem}

\newcommand{\bv}[1]{\llbracket{#1}\rrbracket}

\newcommand{\XiM}{F^{\mathrm{max}}}
\renewcommand{\Xi}{F}
\newcommand{\AU}{\A/U}
\newcommand{\Then}{\Longrightarrow}
\renewcommand{\A}{\mathcal{A}}

\newcommand{\Km}{\mathcal{M}_K^\Xi}
\newcommand{\Kmax}{\mathcal{M}_K^{\XiM}}
\newcommand{\UKm}{\mathcal{M}_K^\Xi/U}
\newcommand{\UKmG}{\mathcal{M}_K^\Xi/{U_g}}
\newcommand{\LUKmG}{(\mathcal{M}_K^\Xi/{U_g},<)}

\newcommand{\UKmPHP}{\mathcal{M}_K^{\FPHP}/{U_g}}

\newcommand{\UKmax}{\mathcal{M}_K^{\XiM}/U}
\newcommand{\UKmaxG}{\mathcal{M}_K^{\XiM}/{U_g}}
\newcommand{\LUKmaxG}{(\mathcal{M}_K^{\XiM}/{U_g},<)}

\renewcommand{\M}{\mathcal{M}}
\newcommand{\Mg}{M[G]^{\mathcal{R}}}

\newcommand{\UltMMax}{\mathrm{Ult}(\M,\XiM,U)}
\newcommand{\UltMM}{\mathrm{Ult}(\M,\Xi,U)}

\newcommand{\UltMMaxG}{\mathrm{Ult}(\M,\XiM,{U_g})}
\newcommand{\UltMMG}{\mathrm{Ult}(\M,\Xi,{U_g})}

\newcommand{\sub}{\subseteq}
\newcommand{\brm}{\begin{remark}\begin{rm}}
\newcommand{\erm}{\end{rm}\end{remark}}
\newcommand{\bce}{\begin{compactenum}}
\newcommand{\ece}{\end{compactenum}}

\newcommand{\set}[2]{\{#1 \: ; \: #2\}}
\newcommand{\seq}[2]{\langle#1 \: ; \: #2\rangle}

\newcommand{\AMno}{Atserias and M{\"u}ller}

\newcommand{\AMc}{Atserias and M{\"u}ller~\cite{atseriasmuller15} }
\newcommand{\Kr}{Krajíček }
\newcommand{\Krno}{Krajíček}

\newcommand{\Krcno}{Krajíček \cite{krajicek11}}

\newcommand{\Los}{Łoś }
\newcommand{\Bi}{\B_{\aleph_1}}

\newcommand{\Bo}{\B_{\omega}}
\newcommand{\idU}{[id]_{U_g}}
\newcommand{\aU}{[\alpha]_{U_g}}
\newcommand{\Bk}{\mathcal{B}_\kappa}

\renewcommand{\P}{\mathbb{P}}
\newcommand{\Co}{\mathcal{C}_{\aleph_0}}

\newcommand{\Pfin}{\tilde{\mathcal{P}}(I)/\mathrm{Fin}}
\newcommand{\Pomega}{\tilde{\mathcal{P}}(\omega)/\mathrm{Fin}}
\newcommand{\fin}{\mathrm{Fin}}

\newcommand{\CH}{\mathrm{CH}}
\newcommand{\CoRO}{\mathrm{RO}(\Co)}

\begin{document}

\title[Forcing in bounded arithmetic and in set theory \ldots]{Forcing in bounded arithmetic and set theory: analogies and differences}

\author{Radek Honzik}
\address[Honzik]{
Charles University, Faculty of Arts,
Celetn{\' a} 20, Prague~1, 
116 42, Czech Republic
}
\email{radek.honzik@ff.cuni.cz}
\urladdr{https://www1.cuni.cz/~honzikr/}

\author{Ondřej Ježil}
\address[Ježil]{
University of Warwick, Department of Computer Science
Coventry
CV4 7AL, United Kingdom
}
\email{ondrej.jezil@email.cz}
\urladdr{ojezil.github.io}

\author{Mykyta Narusevych} \address[Narusevych]{ Charles University, Faculty of Mathematics and Physics, Czechia } \email{mykyta.narusevych@gmail.com} \urladdr{}

\thanks{
All three authors were supported by Czech Science Foundation (GA{\v C}R) grant ``The role of set theory in modern mathematics'' (24-12141S). The second author received support from the UKRI Frontier Research Guarantee Grant EP/Y007999/1 and the Centre for Discrete Mathematics and its Applications (DIMAP) at the University of Warwick. The second author would also like to thank the Isaac Newton Institute for Mathematical Sciences, Cambridge, for support and hospitality during the programme Logical Foundations of Complexity Theory (LFC), where part of the work on this paper was undertaken. The LFC programme was supported by EPSRC grant EP/Z000580/1.}

\begin{abstract}
We provide a modern overview of several forcing methods in bounded arithmetic and compare them with forcing in set theory. We specifically discuss the countable forcing framework of Atserias and M{\"u}ller \cite{atseriasmuller15}, the restricted ultrapowers of Bydžovský and M{\"u}ller \cite{bydvzovsky2020polynomial}, and forcing with random variables developed by \Kr \cite{krajicek11}. We illustrate these forcing arguments by reviewing the Paris--Wilkie forcing and by giving a self-contained construction of the shallow PHP model from \cite{krajicek11}. We observe that, from a formal point of view, \Krno's forcing with uncountable structures can be interpreted in the framework of Atserias and M{\"u}ller after a countable collapse using a L{\"o}wenheim--Skolem argument. We further observe that the countable restricted ultrapowers from \cite{bydvzovsky2020polynomial}, used to obtain models of the theory $T_{PV}$, can also be interpreted in this framework. This part serves as a primer on modern forcing methods in bounded arithmetic. In the next part, we discuss the analogies and differences between forcing in arithmetic and set theory. Building on Honzik \cite{RH:bounded}, we embed \Krno's models into generic extensions of models of set theory. In our interpretation, set theory offers a convenient framework for formalizing forcing arguments with uncountable structures. We end with a question whether analogies with set theory can facilitate the discovery of new uncountable partial orders for arithmetic.\end{abstract}

\maketitle

\tableofcontents

\section{Introduction}\label{sec:1}

Since the discovery of forcing in set theory by Cohen \cite{COHch}, researchers have actively explored the possibilities for extending forcing to other theories. In this article we focus on the status of this project in bounded arithmetic---a collective name for weak subtheories of Peano Arithmetic in which the schema of induction is limited to formulas with (appropriately specified) bounded quantifiers. Our primary goal is to provide a unified exposition of the existing forcing methods in bounded arithmetic and critically analyze their analogies and differences with forcing in set theory; ultimately, we want to explore whether the extensive range of forcing arguments in set theory can provide new insights into forcing in arithmetic. 

As a starting point, we consider the framework of \emph{partially definable forcings} defined by Atserias and M{\"u}ller in \cite{atseriasmuller15} (we will denote this framework AM) and verify that it can be used to interpret another forcing framework developed by Kraj{\'i}{\v c}ek in \cite{krajicek11} using \emph{random variables} (framework K). To satisfy the formal constraints of framework AM, this interpretation of K involves taking a countable elementary submodel yielding a countable version of Kraj{\'i}{\v c}ek's forcing. Additionally, we observe that certain \emph{countable restricted ultrapowers} (framework U) provide another way to look at forcing in bounded arithmetic, also interpretable in framework AM.

The frameworks AM and U are naturally based on countable (nonstandard) models of arithmetic. While the framework K can also be collapsed into a countable setting, it was originally developed using uncountable $\aleph_1$-saturated models and uncountable complete Boolean algebras; the uncountable setting is more natural and intuitive. We observe that forcing with uncountable models is conveniently and elegantly interpreted using the standard notion of set-theoretic forcing and generic ultrafilters. In this interpretation, forcing K can be viewed as yielding uncountable restricted ultrapowers (in the arithmetic language) existing in generic extensions of transitive models of set theory. Moreover, in this perspective, it was much easier to observe that the measure algebra used in \cite{krajicek11} is in fact isomorphic to a standard random measure algebra associated with the non-separable product measure space $2^\kappa$ for an appropriate uncountable $\kappa$ (see the article \cite{RH:bounded} by the first author for more details).

Since all countable non-trivial partial orders are forcing-equivalent\footnote{We say that two partial orders are forcing-equivalent if they have isomorphic Boolean completions.} to the Cohen partial order composed of finite binary sequences ordered by the reverse inclusion (denoted $\Co$), these facts lead to an interesting observation that in bounded arithmetic only two types of partial orders, or equivalently, two types of complete Boolean algebras, have been used for forcing arguments: the countable Cohen forcing $\Co$ (Frameworks AM\footnote{From the external perspective mentioned in the bullet in the main text, it supports all forcing relations despite its countable setting. For instance, as we show in Section \ref{sec:33} and comment in Footnote \ref{ft:ext}, it can interpret \Krno's forcing $\Vdash^M_{\Bk}$ with the random algebra. However, it is not clear how to define $\Vdash^M_{\Bk}$ internally in the countable setting of AM.}, U) and the random algebras associated with the measure spaces $2^\kappa$ (denoted $\Bk$) (Framework K). This contrasts with the apparent richness of partial orders $\P$ used in set-theoretic forcing.

The richness of partial orders in set theory can viewed from two complementary perspectives:

\begin{itemize}

\item \emph{The internal perspective.} If $M$ is a transitive model of set theory, it contains many uncountable forcing non-equivalent partial orders $\P \in M$ (uncountable and non-equivalent in the sense of $M$). The forcing relations $\Vdash^M_\P$ corresponding to $\P$ are defined internally in $M$ and provide in $M$ a proper class of forcing notions $\Vdash^M_\P$.

\item \emph{The external perspective.} Externally in $V$, all non-trivial partial orders $\P$ in countable transitive models of set theory are equivalent to Cohen forcing $\Co$. However, since $\Vdash^M_\P$ are developed internally in $M$, the external countability of $\P$ does not play a role in their definition; consequently,  the countably many forcing relations $\Vdash^M_\P$ provide a range of forcing relations different from the Cohen forcing $\Vdash^M_{\Co}$.
\end{itemize}

In contrast, if $M$ is a model of arithmetic, uncountability cannot be expressed internally in $M$, which makes it more difficult to come up with forcing relations different from the Cohen forcing (Framework K is an exception to this rule). By embedding models of arithmetic in transitive models of set theory, this restriction seems less pressing and dealing with uncountable models becomes more natural (see Section \ref{sec:4} here and the article of the first author \cite{RH:bounded} where this point is developed in detail). In our interpretation, using the set-theoretic forcing framework may be helpful in finding new candidates for uncountable forcing notions applicable to uncountable models of arithmetic, with some of the methods and techniques in set theory being partly transferable to forcing in arithmetic (see Section \ref{sec:open} with open questions).
 
The article is structured as follows. 

In Section \ref{sec:2} we briefly review theories in bounded arithmetic relevant to forcing arguments in order to make the article self-contained.

In Section  \ref{sec:3} we discuss three frameworks for forcing in bounded arithmetic. In Section \ref{sec:31} we briefly review the framework proposed by \AMc for the specific use in bounded arithmetic, with an example of the Paris--Wilkie forcing. In Section \ref{sec:32} we give a stand-alone presentation of forcing with random variables from \Krcno, with the focus on presenting the construction of the shallow PHP model $K(\FPHP,\GPHP)$ originally from~\cite[Chapter 20]{krajicek11}.  Unlike in Krajíček's original book, we prove all the required results about this structure explicitly without referencing earlier parts of the book (this part can work as a primer to forcing with random variables). In Section~\ref{restricted} we discuss countable restricted ultrapowers and the results of Bydžovský and Müller on models of the theory $T_{PV}$~\cite{bydvzovsky2020polynomial}.
 
In Section \ref{sec:33} we observe that if the forcing with random variables from \cite{krajicek11} is collapsed into a countable system, it can be---from  the formal point of view---interpreted in the framework of \AMno.\footnote{\label{ft:ext} Thus providing a countable forcing relation $\Vdash^M_{\Bk}$ different from $\Vdash^M_{\Co}$ in the sense of the external perspective discussed above.} However, this (formal) interpretation does not provide additional insights into the forcing from \cite{krajicek11}. Moreover, we show that countable restricted ultrapowers can also be interpreted in the framework of \AMno. These results suggest that, at the moment at least, the framework developed in \AMc is sufficiently general to serve as a general forcing framework in arithmetic.

In Section \ref{sec:4} we look at forcing with random variables from the perspective of set-theoretic forcing and uncountable structures. We first make a few notes regarding the (faithful) collapse of Boolean-valued models to $\{0,1\}$-valued models, based on the notion of (restricted) fullness of the models, and generic ultrafilters (Sections \ref{sec:41} and \ref{sec:42}). As the first author showed in \cite{RH:bounded}, the complete Boolean algebras from \cite{krajicek11} are isomorphic to standard random measure algebras $\Bk$. This provides parallels between forcing with these algebras in set theory and in arithmetic which we discuss in some detail in Sections \ref{sec:set}.

In Section \ref{sec:open} we state some open questions.

\medskip

\textbf{Acknowledgement.} The authors would like to thank Moritz M{\"u}ller and Ján Pich for helpful comments regarding a draft of this paper.

\section{Forcing in bounded arithmetic}\label{sec:2}

\subsection{A short history} \label{sec:history}

Bounded arithmetic is a collective name for weak subtheories of Peano Arithmetic~\cite{buss85,krajicek95,cook2010logical}. These theories have close connections to proof complexity of propositional proof systems, that is to the task of establishing bounds on the size of any proof of a propositional tautology and each such theory is in a precise technical sense related to a specific complexity class. For example lower bounds on the proof size in the constant depth Frege system originally trace to a forcing construction in bounded arithmetic by Ajtai~\cite{ajtai1994complexity}, and state-of-the-art Boolean circuit size lower bounds~\cite{chen2024symmetric,li2024symmetric} trace back to provability in bounded arithmetic~\cite{jevrabek2004dual}. See the monograph~\cite{cook2010logical} for systematical presentation of this connection. 

Historically, first use of forcing in bounded arithmetic was by Paris and Wilkie in~\cite{paris2006counting}, followed by its use in Ajtai's theorem~\cite{ajtai1994complexity}. The result of Paris and Wilkie has been subsequently generalized by Riis~\cite{riis94}, which was in turn generalized by Müller~\cite{muller20} as an instantiation of the framework of Atserias and Müller \cite{atseriasmuller15}. Another use of this framework was by Narusevych~\cite{narusevych24} to extend results of Müller.

A version of Boolean-valued forcing in bounded arithmetic was considered by Takeuti and Yasumoto~\cite{takeuti1996forcing,takeuti1998forcing}. Kuroda provided an overview of their approach in~\cite{kuroda2018developing} and showed that an earlier conditional forcing construction of Krajíček~\cite{krajicek95onfrege} can be cast in the framework of Takeuti and Yasumoto. An extension of Ajtai's theorem has been proved by Zambella~\cite{zambella1996forcing} using a forcing construction which translates combinatorial statements used by Ajtai into direct model-theoretic arguments. 

In \cite{krajicek11}, Krajíček presented a well-developed Boolean-valued framework for constructing models of both one-sorted and two-sorted arithmetic; one particular result in \cite{krajicek11} is the construction of the model $K(\Ffull,\Gfull)$ which implies a stronger version of Ajtai's theorem. The framework of Krajíček has been cast as a way of expanding a limit of finite structures into a model of two sorted arithmetic by the second author~\cite{jezil2026limits}.

\subsection{Theories}\label{sec:prelim}

Let us briefly review some important theories of bounded arithmetic which motivate our exposition. For a more general context, additional theories and their relationships, the reader can consult the monographs~\cite{cook2010logical, krajicek95} and the dissertation thesis~\cite{buss85}, where a large portion of the modern systems of bounded arithmetic were introduced. Sections about bounded arithmetic can also be found in \cite[Chapter V]{hajekpudlak1995} and~\cite[Section 6.2]{ajtai1994complexity}.

\subsubsection{The theory $I\Delta_0$ and the Macintyre's problem}
The first theory of bounded arithmetic $I\Delta_0$ was introduced by Parikh~\cite{parikh1971existence} as a simple restriction of Peano Arithmetic where we only accept the induction axiom 
\[(\varphi(0)\land (\forall x)(\varphi(x)\to \varphi(x+1)))\to (\forall x)(\varphi(x)),\]
if $\varphi$ is a bounded formula. That is, $\varphi$ is for instance of the form
\[(\forall x_1\leq t_1)(\exists x_2\leq t_2)\dots (\forall x_{k-1}\leq t_{k-1})(\exists x_k\leq t_k)(\varphi_0(x_1,\dots,x_k,y_1,\dots,y_l)),\]
for some terms $t_1,\dots, t_k$, where none of the terms $t_i$ contains $x_i$ as a variable. Known unprovability results for $I\Delta_0$ are limited to Gödel's incompleteness theorem and consequences of what is now known as Parikh's theorem which essentially asserts that functions of superpolynomial growth rate are not provably total in $I\Delta_0$. Neither of these approaches can be used in to show unprovability of universal closures of bounded formulas in suitable generality. The $\Delta_0$-pigeonhole principle scheme, which can be written as a countable collection of universal closures of bounded formulas, states that no bounded formula defines the graph of an injection of $\{0,\dots,a+1\}$ to $\{0,\dots,a\}$. It is still unknown whether $I\Delta_0$ proves this scheme (the question is known as the \emph{Macintyre's problem}). 

\subsubsection{Relativized, Two-sorted and One-sorted Theories}

In this section, we describe different variants of bounded arithmetic theories and tie their constructions back to Macintyre's problem, which we use as an anchor point. A non-trivial unprovability result can be obtained if the basic language of Peano Arithmetic, $L_{PA}=\{0,1,+,\cdot,\leq\},$ is extended by a new binary relational symbol $R$ that can appear in the induction formulas, but there are no other restrictions for its interpretation.  The resulting theory, denoted  $I\Delta_0(R)$, can prove some formulas related to the relation $R$. For example, since $R(x,y)$ satisfies induction, it also has to satisfy the least number principle in models of $I\Delta_0(R)$. A theory which is expanded in this way is called a \emph{relativized theory}.

By a well-known result of Ajtai~\cite{ajtai1994complexity}, it follows that Macintyre's problem for $I\Delta_0(R)$ has a negative solution. That is, $I\Delta_0(R)$ cannot prove that for every number $a$ the relation $R$ does not represent the graph of an injective function from $\{0,\dots,a+1\}$ to $\{0,\dots,a\}$. It follows there is a model $(M,R)$ of $I\Delta_0(R)$, where $R$ is a graph of function from $\{0,\dots,n+1\}$ to $\{0,\dots,n\}$ for some non-standard number $n\in M$.

If $n$ is non-standard, it is easy to find the graph of an injection from  $n+1$ to $n$ since both the sets $\{0,\dots,n+1\}$ and $\{0,\dots,n\}$ are of the same (infinite) cardinality. The hardness stems from the fact that we need the relation $R$ to satisfy bounded induction. To construct such a relation for any suitably ``short'' model $M$ of $I\Delta_0$, Ajtai employs a version of Robinson's model-theoretic forcing combined with an involved combinatorial argument (a switching lemma).

Let us now postpone the discussion of how relativized theories relate to the original Macintyre's problem, and let us introduce another theory. The theory $I\Delta_0(R)$ proves statements about numbers and the fixed binary relation $R$. We can capture this duality more explicitly by a  two-sorted theory $V^0_1$, which can quantify both numbers and sets. Its axiomatization starts with 15 axioms forming a theory denoted $2$-\textbf{BASIC} that gives basic properties of the function symbols in $L_{PA}=\{0,1,+,\cdot,\leq\}$ and of the set variables denoted by capital letters $X,Y,\dots$ that represent bounded sets of numbers. Additionally, there is the length function $\abs{X}$ defined to map the set sort to the number sort and output the succcessor of the largest number in $X$. This set of axioms is extended by the $\Sigma^B_0$-\textbf{COMP} scheme that asserts for every formula $\varphi$ without set quantifiers, with every number quantifier bounded, the following formula
\[(\exists Y\leq y)(\forall z<y)(X(z)\leftrightarrow \varphi(z)),\]
is an axiom of $V^0_1$ (where $X(z)$ means that $z$ is a member of $X$ and the bounded set quantifier $(\exists Y\leq y)(\dots)$ is a short for $(\exists Y)(\abs{Y}\leq y\land \cdots)$).\footnote{For more details about the definition of $V^0_1$ and its extensions $V^i_1$, $i\geq 1$, see~\cite[Section V.1]{cook2010logical}}

The theory $V^0_1$ proves induction for formulas with bounded number quantifiers and moreover it holds that
\begin{equation} \label{exp} V^0_1 \vdash ( \forall X)\varphi(X) \iff I\Delta_0(R)\vdash \varphi(R),\end{equation}
where $\varphi(X)$ is a formula with all quantification bounded over the number sort and with a free set variable $X$ and $\varphi(R)$ is the resulting formula obtained by replacing every occurence of the elementhood predicate\footnote{The theory $I\Delta_0$ can prove the basic properties of the pairing function and by $(x,y)$ we simply denote the number coding the pair.} $X((x,y))$ by $R(x,y)$. Furthermore, a model $(M,R)\models I\Delta_0(R)$ can be expanded to a model of $V^0_1$ by introducing a new set for every predicate definable using a bounded $L_{PA}(R)$-formula.\footnote{Analogous constructions can be carried out with theories $\ZF$ and $\GB$, or $\PA$ and $\ACA$. See \cite[Chapter III(b)]{hajekpudlak1995} for comparison.}

One might be worried whether provability in two-sorted arithmetic theories still has some relation with provability in the unrelativized one-sorted arithmetic, let alone if there is any hope of solving the Macintyre's problem for $I\Delta_0$ this way. Fortunately a construction known as the \emph{$RSUV$-isomorphism}~\cite{takeuti1993rsuv} (see also~\cite[Section 5.5]{krajicek95}) lets us take a model of a two-sorted theory and transform it to a model of a one-sorted theory, by mapping (i) the bounded sets to numbers whose binary expansions code the characteristic function of the set, and (ii) the numbers to their \emph{lengths}, where lengths are identified with numbers $x$ for which $2^x$ provably exists in the theory (the length of a number is thus logarithmic to its magnitude). The strength of the two-sorted theory we start with determines the strength of the one-sorted theory we end up with. Unfortunately, to get a theory which is at least as strong as $I\Delta_0$, we need to start with a model of $V_1$, a~theory which extends $V^0_1$ by allowing both bounded number quantification and bounded set quantification in formulas used in its comprehension scheme. However, forcing a new bounded set $X$ into existence so that it preserves even one bounded existential quantifier for sets in the induction formulas seems currently out of reach. The progress has actually stalled at preserving induction for quantifiers counting elements of the number sort modulo $2$ (the corresponding theory is sometimes denoted $Q_2V^0_1$ and it is known that it is much weaker than $V_1$, see~\cite[Chapters 13 and 22]{krajicek11} for details).

In Figure~\ref{fig:diagramth} we give a diagram depicting some important theories of bounded arithmetic. Currently, the only known forcing construction (in the sense of this article) is known for $I\Delta_0(R)$, and thus for $V^0_1$.  Already for $Q_2V^0_1$, which allows quantifiers counting modulo $2$ in its comprehension scheme, there are no known forcing constructions (and similarly for the extensions of $V^0_1$ by faster growing functions $V^0_k$,  $k\in\mathbb{N}$, which we omit from the diagram). The next theory $V^1_1$ allows bounded existential quantification over the set sort in its comprehension scheme. The theory $V^1_1$ is $RSUV$-isomorphic to the well known theory $S^1_2$ which is often described as a theory formalizing reasoning with polynomial-time concepts. Separating $S^1_2$ from $S_2$ (or, equivalently, $V^1_1$ from $V_1$) is considered one of the main problems of bounded arithmetic. Showing unprovability of a bounded two-sorted formula in $V^1$ translates by the $RSUV$-isomorphism to the unprovability of a bounded one-sorted formula in $S_2$. Finally, $S_2$ is conservative over an extension of $I\Delta_0$ by the totality of the function $x^{\log(x)}$.~\footnote{For details regarding the theories $S^i_2$,$i\geq 1$; $S_2$ and their relationship to $I\Delta_0$ see~\cite[Chapter 5]{krajicek95}.}

\begin{figure}
    \centering
    \[\fbox{
    \begin{tikzcd}[ampersand replacement=\&]
	{I\Delta_0(R)} \&\&\&\&\& \\
	{V^0_1} \& {Q_2V^0_1} \& {V^1_1} \& {V^2_1} \& \dots \& {V_1} \\
	\&\& {S^1_2} \& {S^2_2} \& \dots \& {S_2} \\
	\&\&\&\&\& {I\Delta_0}
	\arrow[from=1-1, to=2-1]
	\arrow["\leq"{description}, draw=none, from=2-1, to=2-2]
	\arrow["\leq"{description}, draw=none, from=2-2, to=2-3]
	\arrow["\leq"{description}, draw=none, from=2-3, to=2-4]
	\arrow["{\scriptscriptstyle\underset{RSUV}{\cong}}"{marking, allow upside down}, draw=none, from=2-3, to=3-3]
	\arrow["\leq"{description}, draw=none, from=2-4, to=2-5]
	\arrow["{\scriptscriptstyle\underset{\tiny RSUV}{\cong}}"{marking, allow upside down}, draw=none, from=2-4, to=3-4]
	\arrow["\leq"{description}, draw=none, from=2-5, to=2-6]
	\arrow["{\scriptscriptstyle\underset{\tiny RSUV}{\cong}}"{marking, allow upside down}, draw=none, from=2-6, to=3-6]
	\arrow["\leq"{description}, draw=none, from=3-3, to=3-4]
	\arrow["\leq"{description}, draw=none, from=3-4, to=3-5]
	\arrow["\leq"{description}, draw=none, from=3-5, to=3-6]
	\arrow["\leq"{marking, allow upside down}, draw=none, from=4-6, to=3-6]
\end{tikzcd}
    }\]

    \caption{\small A diagram of relevant theories.  The arrow from $I\Delta_0(R)$ to $V^0_1$ denotes the expansion in the sense of (\ref{exp}). The relation $S \leq T$ says that $T$ is a stronger theory than $S$ in the sense that for every formula $\varphi$ in the language of $S$ which is also a formula in the language of $T$ if $S\vdash \varphi$ then $T \vdash \varphi$. The relation $V{\scriptscriptstyle\underset{RSUV}{\cong}}S$ means that $V$ can be transformed into $S$ and vice-versa using the $RSUV$-isomorphism.}
    \label{fig:diagramth}
\end{figure}

\section{Frameworks for the arithmetical forcing}\label{sec:two}\label{sec:3}

In this section we survey three important forcing frameworks developed for bounded arithmetic. The first framework is an extension of Abraham Robinson's model-theoretic forcing developed by  Atserias and  Müller in~\cite{atseriasmuller15}, which is presented in Section~\ref{sec:partially definable forcing} (framework AM). In Section~\ref{sectionkrajicek} we discuss the framework based on Boolean-valued models of arithmetic, as developed by Krajíček in~\cite{krajicek11} (framework K). In Section \ref{restricted} we discuss countable restricted ultrapowers (framework U).

The most difficult step in the proofs involving arithmetical forcing  is to show that the induction for certain classes of formulas is preserved by the forcing: the details of this task in a given scenario can be quite challenging, and trace to one of the combinatorial statements known as \emph{switching lemmas}.\footnote{\label{ft:sw}The switching lemmas are certain combinatorial lemmas used in proof complexity, circuit complexity and the related forcing arguments which describe simplification of a computational model under a random restriction on its input bits. The switching lemma for PHP-trees was developed in~\cite{krajivcek2008exponential,pitassi1993exponential} to improve upon Ajtai's lower bound. The PHP-tree switching lemma is analogous to an earlier result of Håstad~\cite{hastad1986almost} concerning to Boolean circuits and which itself is related to earlier results of Subbotovskaya~\cite{subbotovskaya1961realization}, Furst, Saxe and Sipser~\cite{furst1984parity}, Ajtai~\cite{ajtai198311} and Yao~\cite{yao1985separating}.  For more details, see an overview of Beame~\cite{beame1994switching} and of Thapen~\cite{thapen2022notes}.} While this part of the argument is often understood as the heart of the matter in the technical sense, we feel that it can be omitted for the expository purposes for the comparison of the forcing methods in bounded arithmetic and set theory. 

\subsection{Framework AM: Partially definable forcing} \label{sec:31}
\label{sec:partially definable forcing}
In this section we briefly review the framework proposed by Atserias and M{\"u}ller for forcing in bounded arithmetics \cite{atseriasmuller15}. We focus on properties which we will later use for comparison with other forcing frameworks.

\subsubsection{Definitions and main properties}

Throughout this section, we fix a countable model $M$ over a countable language $L$, a countable extension $L^*$ of $L$, and a non-empty countable partially ordered set $(P, \preceq)$ called a \deff{forcing frame}\footnote{The definition of forcing frame in \cite[Section 2.2]{atseriasmuller15} also includes a countable collection $\cl{D}$ of dense subsets of $P$ called \textit{padding} as a part of the frame. The padding does not influence the forcing relation $\Vdash$, but does affect the notion of \textit{genericity} because it determines which dense open sets should a generic filter meet.}. Elements of $P$ are called \deff{conditions}, and the fact that $p \preceq q$ is expressed as $p$ being an \deff{extension of} $q$. The language $L^*(M)$ ($L^*$ enriched by names for all the elements of $M$) is called a \deff{forcing language}. When referring to formulas, sentences, terms, etc., we implicitly fix the underlying language as $L^*(M)$, unless stated otherwise.

In the forcing context, one can imagine conditions being some partial specifications of a would-be structure. If $P$ does not contain $\preceq$-minima (which is the case in all the applications), then no condition specifies such a structure completely. Through these lenses, if two conditions $p, q$ lack a common extension, they are thought to specify contradictory information, reflected in calling them \deff{incompatible} (denoted as $p \perp q$). Compatibility of $p$ and $q$ is denoted as $p \| q$.

The approach to forcing taken in \cite{atseriasmuller15} is to start with a syntactically defined relation $\Vdash$ between conditions and sentences. The fact that $p \Vdash \varphi$ is expressed as $p$ \deff{forces} $\varphi$. The set of all conditions forcing a sentence $\varphi$ is denoted as $[\varphi]$.

What makes an arbitrary relation $\Vdash$ a forcing is a number of useful recurrences and topological properties of sets of the form $[\varphi]$.

\begin{definition}[{\cite[Definition 2.5]{atseriasmuller15}}]
\label{forcing}
    A binary relation $\Vdash$ between elements of $P$ and $L^*(M)$-sentences is called a \deff{forcing} if it satisfies the following properties for all sentences $\varphi, \psi$, all formulas $\chi(x)$, and all conditions $p, q$.
    \begin{enumerate}
        \item \deff{Universal pre-forcing recurrence}\footnote{There is a dual notion of \textit{existential pre-forcing recurrence} (\cite[Definition 2.3]{atseriasmuller15}) which, roughly speaking, takes $\exists, \lor, \neg$ as primitive connectives and expresses $\land, \forall$ via the classical dualities.}:
        \begin{itemize}
            \item $p \Vdash \neg \varphi \:$ if and only if  $\: \forall q \preceq p: \: q \nVdash \varphi$;
            \item $p \Vdash \varphi \land \psi \:$ if and only if  $p \Vdash \varphi$ and $p \Vdash \psi$;
            \item $p \Vdash \varphi \lor \psi \:$ if and only if $\: p \Vdash \neg(\neg \varphi \land  \neg \psi)$;
            \item $p \Vdash \forall x \chi(x) \:$ if and only if, for all $a \in M$, $\: p \Vdash \chi(a)$;
            \item $p \Vdash \exists x \chi(x) \:$ if and only if $\: p \Vdash \neg \forall x \neg \chi(x)$.
        \end{itemize}

        \item \deff{Extension}: if $p \Vdash \varphi$ and $q \preceq p$, then $q \Vdash \varphi$; i.e. the set $[\varphi]$ is \deff{downwards closed}.

        \item \deff{Stability}: if $\forall q \preceq p \: \exists r \preceq q \: : \: r \Vdash \varphi$, then $p \Vdash \varphi$.
    \end{enumerate}
\end{definition}

Already the universal pre-forcing recurrence is enough to derive several other useful properties of $\Vdash$, e.g., that the intersection $[\varphi] \cap [\neg \varphi]$ is empty. So as not to clutter the exposition, we are not listing them all, and instead will state any property with the corresponding reference only when we explicitly need it.

To build up \textit{semantics} tailored to a specific forcing relation $\Vdash$, we turn our attention to \textit{generic filters}. Recall that a \deff{filter} $G$ is a set of conditions satisfying 
\begin{itemize}
    \item for any $p, q \in G$, there exists $r \in G$ so that $r \preceq p, q$ (in particular, any two members of $G$ are compatible);
    \item for any $p \in G$, the filter $G$ contains all the conditions $q$ so that $p \preceq q$ (i.e., $G$ is \deff{upwards closed}).
\end{itemize}
A set $D \subseteq P$ is called \deff{dense} if any condition $p \in P$ can be extended to a condition $q \in D$. Then, \textit{generic filters} are the ones that intersect \textit{sufficiently many} dense sets (to be appropriately defined below). Note that filters cannot in general  intersect all dense sets, but as long as one is concerned with a countable family, such filters do exist (Rasiowa--Sikorsky lemma; essentially \cite[Lemma 2.12]{atseriasmuller15}).

\begin{definition}[{\cite[Definition 2.9]{atseriasmuller15} and also \cite{stern75}}]
\label{forcing algebra}
    The pair $(P, M)$ denotes a two-sorted structure, with the first sort carrying the frame $\forcingFrame$, and the second sort carrying the $L$-structure $M$. 
    
    For each atomic $L^*$-formula $\varphi(\overline{x})$ of arity $r$, we let $R_\varphi$ be a relation symbol of arity $r + 1$ of sort $P \times M^r$. We denote by $(P, M)^\Vdash$  the expansion of $(P, M)$ which interprets each $R_\varphi$ as the relation $\{(p, \overline{a}) \: ; \: p \Vdash \varphi(\overline{a})\}$.

    We define the \deff{forcing algebra} $\forcingAlgebra$ as the set of all subsets of $P$ definable in $(P, M)^\Vdash$. Finally, we say that a filter $G$ is \deff{generic}, if it intersects all dense subsets of $P$ which are  in $\forcingAlgebra$.\footnote{\label{ft:typ} In the \textit{typical} case, $P$ is a non-definable subset of $M$, $\preceq$ is definable in $M$, and all $R_\varphi$ are definable in $(M, P)$---an expansion of $M$ by a unary predicate defining $P$. Then, $\forcingAlgebra$ simply consists of subsets of $P$ definable in $(M, P)$.}
\end{definition}

For a fixed generic $G$, we consider the theory $\Th(G)$ of all sentences which are forced by some condition from $G$. It is then an easy exercise to show that $\Th(G)$ is complete and \textit{formally consistent} (i.e., $\varphi \in \Th(G)$, iff $\neg \varphi \notin \Th(G)$), and so one aims towards defining a suitable $L^*(M)$-structure $M[G]$ satisfying $\Th(G)$. To do so, let $\fk{T}(\Th(G))$ denote the \textit{Herbrand term structure}, which is an $L^*(M)$-structure on the universe of all closed terms, interpreting functions, relations and constants from $L^*(M)$ in the obvious way. We then consider a binary relation $\sim_{\Th(G)} := \{(s, t) \: ; \: s = t \in \Th(G)\}$. If $\sim_{\Th(G)}$ is a \textit{congruence} and all closed terms are $\sim_{\Th(G)}$-congruent to a constant from $M$, then we define the structure $M[G]$ as $\fk{T}(\Th(G))/\sim_{\Th(G)}$ and call it a \deff{generic associate} of $M$.\footnote{Note that it makes sense to define $\fk{T}(\Th(G))/\sim_{\Th(G)}$ even when closed terms are not necessarily congruent to constants from $M$. The latter property, however, is used in the proof of the crucial \textit{truth lemma} \cite[Theorem 2.19]{atseriasmuller15}.} It is clear that the universe of $M[G]$ can be described as $\{[a]_{\sim_{\Th(G)}} \: ; \: a \in M\}$; in other words, members of $M$ ``name" all the elements of $M[G]$.

All the above discussions lead to the following and crucial theorem relating the ``syntactical" forcing relation $\Vdash$ to its ``semantical" counterpart $M[G]$.

\begin{theorem}
\label{truth and completeness lemma}
    Assume $\Vdash$ is such that $M[G]$ is defined for every generic filter $G$. Let $\varphi$ be an arbitrary sentence and $p$ an arbitrary condition.
    \begin{enumerate}
        \item \deff{Truth Lemma}: $M[G]$ satisfies $\Th(G)$.

        \item \deff{Forcing Completeness}: $$p \Vdash \varphi \iff M[G] \vDash \varphi,$$ for all generic filters $G$ containing $p$.
    \end{enumerate}
\end{theorem}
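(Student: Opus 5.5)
We prove the Truth Lemma first, by induction on the complexity of an $L^*(M)$-sentence $\varphi$, showing simultaneously that $M[G]\vDash\varphi$ iff $\varphi\in\Th(G)$. Forcing Completeness then follows quickly. We may take $\neg,\land,\forall$ as primitive: the universal pre-forcing recurrence reduces $\lor$ and $\exists$ to them, and $M[G]$ interprets $\lor,\exists$ classically via the same dualities.

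\textbf{Preliminary facts about $\Th(G)$.} We use two properties.
\begin{enumerate}
\item Formal consistency: $\varphi\in\Th(G)$ iff $\neg\varphi\notin\Th(G)$. For ``not both'', if $p\Vdash\varphi$ and $q\Vdash\neg\varphi$ with $p,q\in G$, take a common extension $r\in G$. By Extension $r\in[\varphi]\cap[\neg\varphi]$, which is empty by the recurrence for $\neg$. For ``at least one'', the set $[\varphi]\cup[\neg\varphi]$ is dense: any $p$ either has an extension forcing $\varphi$, or by the recurrence forces $\neg\varphi$. This set is definable in $(P,M)^\Vdash$, because the relations $[\varphi(\bar a)]$ for arbitrary $\varphi$ are obtained from the atomic $R_\varphi$ by first-order definitions following the recurrence clauses (quantifiers over $M$ for $\forall$, over $P$ and $\preceq$ for $\neg$). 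Hence it lies in $\forcingAlgebra$, and genericity gives $G\cap([\varphi]\cup[\neg\varphi])\neq\emptyset$.
\item Conjunction: $\varphi\land\psi\in\Th(G)$ iff $\varphi,\psi\in\Th(G)$. This uses the recurrence together with a common extension in $G$ and Extension.
\end{enumerate}

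\textbf{The induction.} For atomic $\varphi$, closed terms are $\sim_{\Th(G)}$-congruent to constants from $M$ and $\sim_{\Th(G)}$ is a congruence. So $M[G]\vDash R(t_1,\dots,t_n)$ iff $R(a_1,\dots,a_n)\in\Th(G)$ for the corresponding constants $a_i$. This matches $R(t_1,\dots,t_n)\in\Th(G)$ because equality substitution is compatible with $\Th(G)$; that compatibility is exactly what the congruence hypothesis encodes. The steps for $\neg$ and $\land$ follow from the two preliminary facts.

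The main obstacle is $\forall$. We need $\forall x\chi(x)\in\Th(G)$ iff $\chi(a)\in\Th(G)$ for all $a\in M$; the induction hypothesis and the fact that every element of $M[G]$ is named by some $a\in M$ then finish.
\begin{itemize}
\item Left to right: $p\Vdash\forall x\chi$ gives $p\Vdash\chi(a)$ for every $a$.
\item Right to left, suppose $\forall x\chi\notin\Th(G)$. By consistency some $p\in G$ forces $\neg\forall x\chi$. The set $D=\{q\,;\,q\Vdash\forall x\chi\ \text{or}\ \exists a\in M\ q\Vdash\neg\chi(a)\}$ is definable in $(P,M)^\Vdash$ with a quantifier over the $M$-sort. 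It is dense: if $q$ has no extension forcing some $\neg\chi(a)$, then for every $a$ no extension of $q$ forces $\neg\chi(a)$. So every extension of $q$ has a further extension forcing $\chi(a)$, and Stability gives $q\Vdash\chi(a)$. Hence $q\Vdash\forall x\chi$.
\end{itemize}
Genericity gives some $r\in G\cap D$. Now $r\Vdash\forall x\chi$ is impossible, since $r$ is compatible with $p$ inside $G$ and that would give a condition in $[\varphi]\cap[\neg\varphi]$. So $r\Vdash\neg\chi(a)$ for some $a$, and $\chi(a)\notin\Th(G)$.

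\textbf{Forcing Completeness.} The direction $\Rightarrow$ is immediate from the Truth Lemma. For $\Leftarrow$, suppose $p\nVdash\varphi$. By Stability some $q\preceq p$ has no extension forcing $\varphi$, so $q\Vdash\neg\varphi$. Countability of $\forcingAlgebra$ and the Rasiowa--Sikorski lemma give a generic $G\ni q$. Then $p\in G$ by upward closure, and $M[G]\vDash\neg\varphi$, contradicting the assumption that $\varphi$ holds in $M[G]$ for every generic $G$ containing $p$.
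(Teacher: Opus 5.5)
Your proof is correct and is essentially the standard argument behind Atserias and M{\"u}ller's Theorem 2.19, which the paper cites rather than reproves. It has the same ingredients: formal consistency and completeness of $\Th(G)$ from the definable dense sets $[\varphi]\cup[\neg\varphi]$; the witnessing property from the definable dense set $[\forall x\,\chi]\cup\bigcup_{a\in M}[\neg\chi(a)]$; induction over the Herbrand quotient, using that every element of $M[G]$ is named by some $a\in M$; and Rasiowa--Sikorski for the converse direction of completeness. The only point worth making explicit is that your definability claims use parameters from $M$ (e.g.\ the constants occurring in $\varphi$), which is the intended reading of ``definable in $(P,M)^\Vdash$''.
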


Let us now address several aspects of this forcing framework, relevant for our exposition.

\subsubsection{Principal Theorem and the definability of the forcing relation}

One of the motivations behind framework AM was to formalize correspondence between the definability of the forcing relation $\Vdash$ in the ground model $M$ of arithmetic and the class of formulas for which the $<$-least number principle (induction) holds in generic extensions of $M$. Atserias and M{\"u}ller managed to show this correspondence for an important class of forcing notions which they called \emph{conservative}. Generic extension of conservative forcing notions are actually \emph{expansions}:   models $M[G]$ which have the same domain as $M$ (no new integers are added), but they have new relations. 

\brm
We note that frameworks K and U, which we discuss later, are non-conservative, hence the definability result of AM does not apply to them.
\erm

The following definitions and results are from \cite{atseriasmuller15}:
\begin{definition}
\label{forcing expansion and conservative forcing}
    Let $M[G]$ be a generic associate structure. We say $M[G]$ is a \deff{forcing expansion} if the map $a \mapsto [a]_{\sim_{\Th(G)}}$ is an isomorphism between $L(M)$-structures $M$ and $M[G]$.   Let $\Vdash$ be forcing. We say $\Vdash$ is \deff{conservative} if, for every condition $p$ and every atomic $L(M)$-sentence $\varphi$, $p$ forces $\varphi$ if and only if $\varphi$ is true in $M$ \footnote{While not mentioned in the paper \cite{atseriasmuller15} explicitly, it is not hard to show that a conservative forcing $\Vdash$ satisfies that every $p$ forces \textit{any} $L(M)$-sentence $\varphi$ which is true in $M$.}.
\end{definition}

\begin{theorem}[{\cite[Proposition 2.26]{atseriasmuller15}}]
\label{conservative forcing leads to forcing expansion}
    Let $\Vdash$ be forcing such that $M[G]$ is defined for every generic $G$. Then, $\Vdash$ is conservative if and only if each $M[G]$ is a forcing expansion.
\end{theorem}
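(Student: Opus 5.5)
We prove both directions using the Forcing Completeness part of Theorem~\ref{truth and completeness lemma} and the Rasiowa--Sikorski lemma, which provides a generic filter through any given condition. Throughout, let $\pi_G \colon a \mapsto [a]_{\sim_{\Th(G)}}$. This map is always surjective onto the universe of $M[G]$, because every element of $M[G]$ is named by some $a \in M$.

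\emph{Conservative $\Rightarrow$ forcing expansion.} Fix a generic $G$. It suffices to show that $\pi_G$ preserves and reflects every atomic $L(M)$-sentence. A map that does this is injective, since it preserves and reflects $a=b$, and it is surjective as noted, so it is an isomorphism of $L(M)$-structures. Consider an atomic $L(M)$-sentence $\varphi$, built from names of elements of $M$ and symbols of $L$.

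First we record that the interpretation of $\varphi$ in $M[G]$ is exactly the evaluation of $\varphi$ at the classes $\pi_G(a)$. This holds because $M[G]$ is the Herbrand term structure modulo $\sim_{\Th(G)}$, and each $L$-term is congruent to the constant naming its value: the atomic sentence $f(\bar a)=b$ holds in $M$, so conservativity puts it in $\Th(G)$.

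Now $M[G]\vDash\varphi$ iff $\varphi\in\Th(G)$, using the Truth Lemma together with completeness and formal consistency of $\Th(G)$. By definition, $\varphi\in\Th(G)$ iff some $p\in G$ forces $\varphi$. By conservativity, this holds iff $M\vDash\varphi$, where we use that $G$ is nonempty. Hence $\pi_G$ preserves and reflects atomic sentences, and $M[G]$ is a forcing expansion.

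\emph{Forcing expansion $\Rightarrow$ conservative.} Let $p$ be a condition and $\varphi$ an atomic $L(M)$-sentence.

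Suppose first that $M\vDash\varphi$. For every generic $G\ni p$, the map $\pi_G$ is an isomorphism of $L(M)$-structures, so $M[G]\vDash\varphi$. Forcing Completeness then gives $p\Vdash\varphi$.

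Conversely, suppose $p\Vdash\varphi$. Take any generic $G\ni p$; one exists by Rasiowa--Sikorski, since $\forcingAlgebra$ is countable because $P$, $M$ and $L^*$ are countable. Then $M[G]\vDash\varphi$, and pulling back along the isomorphism $\pi_G$ gives $M\vDash\varphi$.

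\emph{Main obstacle.} The delicate point is the bookkeeping in the first direction: identifying the $L(M)$-structure that $M[G]$ inherits with the evaluation at the classes $[a]$. In particular, one must ensure that $L$-function symbols applied to names behave correctly. Conservativity on the atomic equalities $f(\bar a)=b$ handles this. Everything else follows directly from Theorem~\ref{truth and completeness lemma}.
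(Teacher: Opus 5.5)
Your proof is correct. The paper gives no argument of its own for this statement; it only cites Atserias--M\"uller. Your route is the natural derivation from the results the paper does state: completeness of $\Th(G)$ and the Truth Lemma for the forward direction, and Forcing Completeness from Theorem~\ref{truth and completeness lemma} plus a Rasiowa--Sikorski generic filter through $p$ for the converse.

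Your ``first we record'' paragraph is unnecessary. $M[G]$ interprets the name $a$ as $[a]_{\sim_{\Th(G)}}$ by construction, so preservation and reflection of the atomic $L(M)$-sentences $a=b$, $f(\bar a)=b$ and $R(\bar a)$ is literally the isomorphism condition. Also, $G\neq\emptyset$ holds because $G$ meets the definable dense set $P$.
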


A particular motivation behind developing such a forcing framework is to provide a general method for the construction of expansions of models of true arithmetic to models violating certain true arithmetic principles while at the same time preserving the \textit{least number principle} for as large a formula class as possible. To this end, \cite[Section 3]{atseriasmuller15} develops the so-called \textit{Principal Theorems}, providing certain definability conditions on conservative forcings for which the models $M[G]$ satisfy the mentioned principle.

Through the rest of this section, we assume $L$ contains a binary relation symbol $<$, and $M$ interprets $<$ as a strict linear ordering. We assume $M$ satisfies the least number principle with respect to $<$ for all $L(M)$-formulas.

For a term $t$ and a formula $\varphi$, notations $\forall x < t \: \varphi$ and $\exists x < t \: \varphi$ are shorthand for $\forall x (x < t \to \varphi)$, resp. $\exists x (x < t \land \varphi)$.

\begin{definition}
\label{partially definable forcing}
    Let $\varphi$ be a sentence and $p$ be a condition. Notation $p \| \varphi$ stands for $p \nVdash \neg \varphi$. Assume $\Vdash$ is a conservative forcing and $\varphi(\overline{x})$ is a formula. We say $\Vdash$ is \deff{definable for $\varphi(\overline{x})$} if, for any condition $p$, the set $\{ \overline{a} \: ; \: p \| \overline{a}\}$ is definable in $M$.
\end{definition}

\begin{theorem}[Principal Theorem {\cite[Theorem 3.5]{atseriasmuller15}}]
\label{principal theorem}
    Let $\Phi$ be a class of formulas and $\Vdash$ be a conservative forcing. If $\Vdash$ is definable for all formulas from $\Phi$, then $M[G]$ satisfies the $<$-least number principle for $\Phi$ for any generic $G$ for which $M[G]$ is defined.
\end{theorem}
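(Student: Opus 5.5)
We argue by contradiction using the truth lemma and forcing completeness (Theorem~\ref{truth and completeness lemma}), together with the fact that, by Theorem~\ref{conservative forcing leads to forcing expansion}, $M[G]$ has the same universe and the same $L(M)$-structure as $M$ (identifying $a$ with $[a]_{\sim_{\Th(G)}}$). Fix $\varphi(x,\overline{y})\in\Phi$, parameters $\overline{b}\in M$ and a generic $G$. Suppose $M[G]\models \exists x\,\varphi(x,\overline{b})$ but that $\varphi(x,\overline{b})$ has no $<$-least witness in $M[G]$. That is,
\[
M[G]\models \exists x\,\varphi(x,\overline{b}) \;\land\; \forall x\,\bigl(\varphi(x,\overline{b})\to \exists z<x\,\varphi(z,\overline{b})\bigr).
\]
By the truth lemma and forcing completeness, some $p\in G$ forces this conjunction. (Strictly, we should check that $\Phi$ is closed enough that the relevant instance has parameters $\overline{b}$; treating the $\overline{b}$ as names, the set in Definition~\ref{partially definable forcing} is taken with $\overline{b}$ fixed.)

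The key step is to apply definability to $p$. Let $D=\{a\in M \;;\; p\,\|\,\varphi(a,\overline{b})\}$, which is definable in $M$ by the assumption on $\Phi$.

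First, $D$ is nonempty. Since $p\Vdash\exists x\,\varphi$, unfolding the recurrence gives $p\Vdash\neg\forall x\neg\varphi$. So every $q\preceq p$ fails to force $\forall x\neg\varphi$, hence fails to force $\neg\varphi(a,\overline{b})$ for some $a$. Taking $q=p$, there is $a$ with $p\nVdash\neg\varphi(a,\overline{b})$, i.e.\ $a\in D$.

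By the least number principle in $M$ for $L(M)$-formulas, $D$ has a $<$-least element $a_0$. Since $p\nVdash\neg\varphi(a_0,\overline{b})$, the recurrence for $\neg$ yields some $q\preceq p$ with $q\Vdash\varphi(a_0,\overline{b})$.

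By extension, $q$ also forces $\forall x(\varphi\to\exists z<x\,\varphi)$, hence the instance at $a_0$. Using the recurrence for $\lor$ and $\neg$, together with stability, this gives $q\Vdash\exists z<a_0\,\varphi(z,\overline{b})$. Unfolding as above, there is $r\preceq q$ and some $c$ with $r\Vdash c<a_0\land\varphi(c,\overline{b})$. By conservativity, forcing the atomic sentence $c<a_0$ means $c<a_0$ holds in $M$. Moreover, $r\preceq p$ forces $\varphi(c,\overline{b})$, so $r\nVdash\neg\varphi(c,\overline{b})$ (since $[\psi]\cap[\neg\psi]=\emptyset$). Then $p\nVdash\neg\varphi(c,\overline{b})$ as well, because by extension, if $p$ forced $\neg\varphi(c,\overline{b})$ then so would $r$. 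Hence $c\in D$ with $c<a_0$, contradicting the minimality of $a_0$.

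The main obstacle is the bookkeeping in unfolding the forcing of the implication and the bounded existential into an actual condition $r$ and witness $c$. This requires the derived properties of universal pre-forcing (forcing $\neg\neg\psi$ is equivalent to forcing $\psi$ via stability, and forcing a disjunction densely yields one disjunct). Only conservativity is needed to transfer the atomic fact $c<a_0$ back to $M$.
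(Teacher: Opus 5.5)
Your argument is correct. It is essentially the standard proof from Atserias and M\"uller, which the paper cites without reproducing: definability makes $\{a \,;\, p \,\|\, \varphi(a,\overline{b})\}$ definable in $M$, the least number principle in $M$ gives its least element $a_0$, some extension of $p$ forces $\varphi(a_0,\overline{b})$, and conservativity settles the atomic sentences $c<a_0$. The only difference is framing: you derive a contradiction from a condition forcing the failure of the principle, whereas the same ingredients also show directly that the conditions forcing $\neg\exists x\,\varphi(x,\overline{b})$ or $\varphi(a_0,\overline{b})\land\forall z<a_0\,\neg\varphi(z,\overline{b})$ are dense.

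One step should be tightened: the passage from $M[G]$ satisfying the failure to some $p\in G$ forcing it. This uses completeness of $\Th(G)$ (hence genericity) together with the truth lemma, not forcing completeness alone.
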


A convenient general fact is that the class of formulas $\Phi$ for which $\Vdash$ is definable is \textit{closed under the existential quantification}. This yields the following form of the Principal Theorem.

\begin{corollary}
\label{strong principal theorem}
    Let $\Phi$ be a class of formulas and $\Vdash$ a conservative forcing. If $\Vdash$ is definable for all formulas from $\Phi$, then $M[G]$ satisfies the $<$-least number principle for all formulas $\exists \overline{x} \: \varphi(x, \overline{x})$, where $\varphi(x, \overline{x}) \in \Phi$, for any generic $G$ for which $M[G]$ is defined.
\end{corollary}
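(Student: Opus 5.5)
The corollary follows from the Principal Theorem (Theorem \ref{principal theorem}) once we know that the class of formulas for which $\Vdash$ is definable is closed under existential quantification. So the plan has two steps. First, set $\Phi' := \{\exists \overline{x}\, \varphi(x,\overline{x}) \; ; \; \varphi \in \Phi\}$ and show that $\Vdash$ is definable for every formula in $\Phi'$. Then apply Theorem \ref{principal theorem} to the class $\Phi'$; the least number principle for $\Phi'$ in every defined $M[G]$ is exactly the claim. By induction on the length of $\overline{x}$, it suffices to treat a single quantifier: if $\Vdash$ is definable for $\varphi(\overline{y},z)$, then it is definable for $\exists z\, \varphi(\overline{y},z)$.

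For the single-quantifier step, fix a condition $p$. We need to show that $\{\overline{a} \; ; \; p \| \exists z\, \varphi(\overline{a},z)\}$ is definable in $M$. Unfolding the recurrences,
\[
p \| \exists z\, \varphi(\overline{a},z) \iff p \nVdash \neg\neg\forall z\neg\varphi(\overline{a},z) \iff p \nVdash \forall z \neg \varphi(\overline{a},z),
\]
where the last equivalence uses the fact that $[\neg\neg\psi]=[\psi]$. This fact follows from the negation clause together with Stability and Extension. By the universal clause, the right-hand side holds iff there is some $b\in M$ with $p\nVdash\neg\varphi(\overline{a},b)$, i.e. $p \| \varphi(\overline{a},b)$. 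Hence
\[
\{\overline{a} \; ; \; p \| \exists z\,\varphi(\overline{a},z)\} = \{\overline{a} \; ; \; \exists b\, (\overline{a},b)\in \{(\overline{a}',b') \; ; \; p\|\varphi(\overline{a}',b')\}\},
\]
which is the projection of a set that is $M$-definable by hypothesis. Projections of definable sets are definable, so the step is done.

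The only point needing care is the identity $[\neg\neg\psi]=[\psi]$. Extension and the negation clause give $[\psi]\subseteq[\neg\neg\psi]$. For the converse, suppose $p\Vdash\neg\neg\psi$. Then every $q\preceq p$ satisfies $q\nVdash\neg\psi$, so some $r\preceq q$ forces $\psi$, and Stability then yields $p\Vdash\psi$. I do not expect any real obstacle here; the corollary is essentially bookkeeping on top of Theorem \ref{principal theorem}.
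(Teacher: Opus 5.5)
Your proposal is correct and follows the paper's route. The paper obtains the corollary by applying the Principal Theorem together with the fact, stated there without proof, that the class of formulas for which $\Vdash$ is definable is closed under existential quantification. Your projection argument, which uses $[\neg\neg\psi]=[\psi]$ (from Stability and Extension) to show that $p \| \exists z\,\varphi(\overline{a},z)$ holds iff $p \| \varphi(\overline{a},b)$ for some $b \in M$, supplies exactly that verification.
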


\begin{remark}
    In \cite[Definition 3.3]{atseriasmuller15}, a weaker notion of \textit{dense definability of $\Vdash$ up to a parameter $b \in M$} is considered. Principal Theorem \cite[Theorem 3.5]{atseriasmuller15} still applies, and if $\Vdash$ is (densely) definable up to $b$, then $M[G]$ satisfies the corresponding least number principle only up to $b$.

    Dense definability is instrumental in carrying the \textit{Ajtai forcing} \cite[Section 4.3]{atseriasmuller15}, while a more straightforward application (\cite[Section 4.1]{atseriasmuller15}) allows us to establish already the full definability of $\Vdash$.
\end{remark}

\subsubsection{An example: Paris-Wilkie forcing}

In this section, we exemplify the usage of Principal Theorem \ref{strong principal theorem} following~\cite[Section 4.1]{atseriasmuller15} which itself is a presentation of~\cite[Theorem 21]{paris2006counting}. Let $L$ be the language of \textit{Peano Arithmetic}, containing unary constants $0, 1$, binary functions $+, \times$, and binary relation $<$. Let $M$ be a countably infinite \textit{nonstandard model of true arithmetic}, and let $n$ be a \textit{nonstandard number}. Finally, $L^*$ extends $L$ by a single binary relation symbol $R$.

Let $\neg \ontoPHP(n, R)$ be sentence stating that ``$R$ is a perfect matching of the complete bipartite graph $K_{n+1,n}$", formally expressed as
\begin{gather*}
    \forall p < n + 1 \: \exists h < n : R(p, h)
    \\
    \land
    \\
    \forall h < n \: \exists p < n + 1 : R(p, h)
    \\
    \land
    \\
    \forall p < n + 1 \: \forall h, h' < n : R(p, h) \land R(p, h') \to h = h'
    \\
    \land
    \\
    \forall p, p' < n + 1 \: \forall h < n : R(p, h) \land R(p', h) \to p = p'.
\end{gather*}

Our goal is to expand $M$ by $R$ so that $\neg \ontoPHP(n, R)$ holds, and, at the same time, the least number principle is preserved for as large a formula class as possible. To this end, we define a frame $(P, \preceq)$, with conditions being finite matching of $K_{n+1, n}$ and $\preceq$ being a reverse inclusion. We now define a forcing relation $\Vdash$. Since we are aiming at universal conservative forcing (\ref{forcing}, \ref{forcing expansion and conservative forcing}), it is enough to specify it for sentences of the form $R(s, t)$, where $s, t$ are closed $L(M)$-terms. We let
\begin{align*}
    p \Vdash R(s, t) \iff \{s^M, t^M\} \in p,
\end{align*}
where $s^M, t^M$ denote values of the terms $s, t$ in $M$.

\begin{lemma}[{\cite[Section 4.1]{atseriasmuller15}}]
    The $\Vdash$ relation is a conservative forcing, and $M[G]$ is defined for any generic filter $G$. Furthermore, any condition $p$ (or, equivalently, the $\preceq$-largest condition $\varnothing$) forces $\neg \ontoPHP(n, R)$.
\end{lemma}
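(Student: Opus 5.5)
The plan is to first fix how $\Vdash$ is defined on all $L^*(M)$-sentences, then check the three clauses of Definition~\ref{forcing}, then conservativity, then the hypotheses for $M[G]$, and finally compute the forcing of $\neg\ontoPHP(n,R)$. On atomic sentences, $L(M)$-atomic sentences are forced by every $p$ exactly when true in $M$. A sentence $R(s,t)$ is forced by $p$ iff the edge $\{s^M,t^M\}$ belongs to $p$. Equality $s=t$ between closed terms is treated as an $L(M)$-atomic sentence, so it is forced iff $s^M=t^M$. The relation is then extended to all sentences by the universal pre-forcing recurrence, which serves as a recursive definition on the complexity of sentences (with $\neg,\land,\forall$ primitive). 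Conservativity is then immediate from this definition of the atomic case.

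For extension and stability, I would argue by induction on the sentence. For atomic $R(s,t)$, extension holds because $\preceq$ is reverse inclusion. Stability is the one nontrivial atomic check: if every $q\preceq p$ has an $r\preceq q$ containing the edge $e=\{s^M,t^M\}$, then $e\in p$. Otherwise, either $p\cup\{e\}$ is still a finite matching, or it is not. In the first case, let $q=p\cup\{e'\}$ for an edge $e'$ sharing a vertex with $e$ but different from it, which exists since $n$ is nonstandard (indeed $n\ge 2$ suffices). In the second case, take $q=p$ itself. In either case, no extension of $q$ contains $e$ (if $e$ is not even an edge of $K_{n+1,n}$, no condition contains it). For $L(M)$-atomic sentences both properties are trivial. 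For $\neg$, $\land$ and $\forall$ the inductive steps are the standard ones: a sentence of the form $\neg\varphi$ is always extension-closed and stable by definition, and $\land$ and $\forall$ preserve both properties as intersections.

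For $M[G]$ to be defined, note that every closed $L^*(M)$-term is an $L(M)$-term, since $R$ is a relation symbol, so it equals the constant for its value in $M$ in $\Th(G)$. To see that $\sim_{\Th(G)}$ is a congruence, I would show that the equality axioms for $R$ are forced by every condition: if $s^M=s'^M$ and $t^M=t'^M$, then $p\Vdash R(s,t)$ iff $p\Vdash R(s',t')$ by definition. Alternatively, one can appeal to Theorem~\ref{conservative forcing leads to forcing expansion} once definedness is established.

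The main work, and where I expect care is needed, is showing that $\varnothing\Vdash\neg\ontoPHP(n,R)$. Since the sentence is a conjunction, I would force each conjunct separately, unwinding $\forall$ and $\exists$ through the recurrences. For the totality clause $\forall p<n+1\,\exists h<n\,R(p,h)$, fix $a<n+1$ and a condition $q$. By stability it suffices that densely below $q$ some condition forces $R(a,b)$ with $b<n$. This holds because the finite matching $q$ either already covers $a$, or it misses $a$, in which case it leaves some $b<n$ uncovered since $n$ is nonstandard and $q$ is finite in the external sense. Adding $\{a,b\}$ gives the required extension, and $\neg\forall\neg$ is then forced by density. The surjectivity clause is symmetric. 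The two injectivity clauses reduce to the following: no condition extending $q$ can force both $R(a,h)$ and $R(a,h')$ with $h\neq h'$, because conditions are matchings. Hence every condition forces $\neg(R(a,h)\land R(a,h')\land h\neq h')$, which yields the implication via the recurrence for $\lor$ and $\neg$. A delicate point is the use of genuinely finite (external) matchings, so that the density arguments work for each parameter $a\in M$.
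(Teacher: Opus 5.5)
Your proposal is correct and follows essentially the standard verification behind the cited result; the paper itself gives no proof and defers to Atserias--M\"uller, \S4.1. As there, you check extension and stability for the atomic sets $[R(s,t)]$, propagate them through the universal recurrence, note that closed terms are $L(M)$-terms so $M[G]$ is defined, and get $\neg\ontoPHP(n,R)$ by density arguments using that conditions are externally finite and $n$ is nonstandard (one small imprecision: stability of $[\neg\varphi]$ is not automatic ``by definition'' but uses the inductive hypothesis that $[\varphi]$ is downward closed, which your induction does supply).
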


\begin{theorem}
\label{definability of paris-wilkie forcing}
    The forcing $\Vdash$ is definable for quantifier-free formulas.
\end{theorem}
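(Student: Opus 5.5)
Throughout, write $|q|$ for the number of pairs in a finite matching $q$. The aim is to show that for every condition $p$ and every quantifier-free $L^*(M)$-formula $\varphi(\overline{x})$, the set $\{\overline{a} \: ; \: p \| \varphi(\overline{a})\}$ is definable in $M$. The plan is to turn "$p \| \varphi(\overline{a})$", i.e. $p \nVdash \neg\varphi(\overline{a})$, into an explicit combinatorial condition on finite matchings extending $p$. That condition can be written in $M$, because finite matchings (with standard-finite size) are coded by elements of $M$.

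First, put $\varphi$ in disjunctive normal form over atoms of two kinds: $L$-atoms, and atoms $R(s,t)$ with $s,t$ $L(M)$-terms. Since the forcing is conservative, each $L$-atom $\psi(\overline{a})$ is forced by every condition exactly when $M\models\psi(\overline{a})$. Likewise, its negation is forced exactly when $M \models \neg\psi(\overline{a})$. So $L$-atoms behave classically and remain definable in $M$.

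For an $R$-atom, $q\Vdash R(s,t)$ iff $\{s^M,t^M\}\in q$. Using the recurrence, $q\Vdash \neg R(s,t)$ iff no extension of $q$ contains $\{s^M,t^M\}$. This happens iff either the pair is not an edge of $K_{n+1,n}$, or one of its endpoints is already matched in $q$ to a different vertex.

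Next, prove the key claim: $p \| \varphi(\overline{a})$ iff some condition $q\preceq p$ with $|q|\le |p|+k$ forces $\varphi(\overline{a})$, where $k$ is the standard number of $R$-atoms in $\varphi$. Unfolding $p\nVdash\neg\varphi$ gives some $q\preceq p$ with $q\Vdash\varphi$. For the forward direction, show that for quantifier-free $\varphi$ in this forcing, $q\Vdash\varphi$ iff every extension of $q$ can be extended to one deciding all $k$ atoms in a way that makes $\varphi$ true. Then check by induction on $\varphi$ that forcing of a quantifier-free sentence depends only on which atoms are decided positively or negatively. Given $q \Vdash \varphi$, restrict $q$ to $p$ together with the at most $k$ edges of $q$ involving vertices mentioned in the atoms. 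The resulting $q'$ has the same decided atom pattern, and since $n$ is nonstandard, $q'$ has the same extension behaviour on those atoms, so $q'\Vdash\varphi$. The backward direction is immediate.

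Finally, with the bound $|q|\le|p|+k$, the property "$\exists q\preceq p$, $|q|\le |p|+k$, $q\Vdash\varphi(\overline{a})$" is first-order over $M$. The quantifier over $q$ ranges over a bounded number of extra pairs of elements below $n+1$. The relation $q\Vdash\varphi(\overline{a})$ for such small $q$ is expressed by a Boolean combination of equalities between term values and membership in $q$, via the DNF and the atom criteria above. The main obstacle is the restriction step: making precise that forcing a quantifier-free sentence depends only on a bounded local piece of the condition. If that turns out messy, I would instead prove by induction on $\varphi$ that $[\varphi]$ is exactly the set of $q$ all of whose "local completions" on the relevant vertices satisfy $\varphi$ propositionally. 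Definability then follows directly, because only the standard-finite set of vertices $s^M,t^M$ matters.
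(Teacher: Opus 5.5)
Your argument is correct, but it takes a different and heavier route than the paper. The paper never quantifies over extensions of $p$. It first uses $p \| (\psi \lor \theta)$ iff $p \| \psi$ or $p \| \theta$ to split the DNF into single disjuncts, and evaluates the $L$-part in $M$. It then gives a closed-form criterion for a conjunction of literals: $p \| \bigwedge_j R(s_j,t_j) \land \bigwedge_j \neg R(s'_j,t'_j)$ iff $p$ together with the pairs $\{s_j^M,t_j^M\}$ is a matching of $K_{n+1,n}$ containing none of the pairs $\{(s'_j)^M,(t'_j)^M\}$. (The negative literals can then be forced by blocking with fresh vertices.) Definability is then immediate.

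You instead prove a small-witness lemma via locality of quantifier-free forcing: if some $q \preceq p$ forces $\varphi(\overline{a})$, then so does a subcondition of $q$ containing $p$ with only standardly many extra edges. You then write the bounded existential in $M$. Your locality claim is true here: for a given pattern of decided atoms, the set of total valuations realizable by extensions does not depend on the rest of the condition, again because $n$ is nonstandard. Your route makes explicit why a standard size bound is needed (standard finiteness is not definable in $M$), and it would transfer to other finite-condition forcings where quantifier-free forcing is local. The cost is the locality lemma, which the paper avoids entirely.

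There are two small slips. First, keeping all edges of $q$ that touch the at most $2k$ mentioned vertices can add up to $2k$ edges, not $k$. This is harmless: any standard bound works, or you can keep one witnessing edge per decided atom. Second, ``$q \Vdash \varphi(\overline{a})$ via the DNF'' is literally correct only for $q$ deciding every atom, since forcing does not distribute over $\lor$: $\varnothing \Vdash R(0,0) \lor \neg R(0,0)$, yet $\varnothing$ forces neither disjunct. To fix this, either restrict your existential quantifier to such total $q$, where forcing is just propositional evaluation, or first push $\|$ through the disjunction, where it does distribute, as the paper does.
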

\begin{proof} The original proof in \cite[Lemma 4.1]{atseriasmuller15} uses a more sophisticated method of \textit{definable antichains}. While not necessary for the purposes of our paper, such method is crucial for establishing definability of the \textit{Ajtai forcing}.

    Let $p$ be a condition, and $\varphi(\overline{x})$ be an open formula. We assume $\varphi(\overline{x})$ is of the form
    \begin{align*}
        \bigvee_{i < k} \varphi_i(\overline{x}) \land \varphi_i^+(\overline{x}) \land \varphi_i^-(\overline{x}), 
    \end{align*}
    where $k$ is finite, $\varphi(\overline{x})$ is a quantifier free $L(M)$-formula, $\varphi^+_i(\overline{x})$ is a conjunction of atomic formulas of the form $R(s(\overline{x}), t(\overline{x}))$, and $\varphi^-_i(\overline{x})$ is a conjunction of atomic formulas of the form $\neg R(s(\overline{x}), t(\overline{x}))$.

    Since $\Vdash$ satisfies universal forcing recurrence \ref{forcing}, it follows that $p \| (\psi \lor \theta)$ if and only if $p \| \psi$ or $p \| \theta$, for arbitrary sentences $\psi, \theta$. This implies
    \begin{align*}
        \{\overline{a} \: ; \: p \| \varphi(\overline{a})\} = \bigcup_{i < k} \{\overline{a} \: ; \: p \| \varphi_i(\overline{a}) \land \varphi_i^+(\overline{a}) \land \varphi_i^-(\overline{a})\}.
    \end{align*}
    
    Let $A_i$ be definable set of tuples $\overline{a}$ satisfying $\varphi_i(\overline{a})$ in $M$. It is then clear
    \begin{align*}
        \{\overline{a} \: ; \: p \| \varphi_i(\overline{a}) \land \varphi_i^+(\overline{a}) \land \varphi_i^-(\overline{a})\} = A_i \cap \{\overline{a} \: ; \: p \| \varphi_i^+(\overline{a}) \land \varphi_i^-(\overline{a})\}.
    \end{align*}

    For an atomic formula $R(s(\overline{x}), t(\overline{x}))$, $p \| R(s(\overline{a}), t(\overline{a}))$ if and only if $p \cup \{\{s^M(\overline{a}), t^M(\overline{a})\}\}$ is a matching of $K_{n+1, n}$, and, for a finite conjunction of such formulas $\bigwedge_{j < r} R(s_j(\overline{x}), t_j(\overline{x}))$, $p \| \bigwedge_{j < r} R(s_j(\overline{a}), t_j(\overline{a}))$ if and only if $p \cup \{\{s^M_0(\overline{a}), t^M_0(\overline{a})\}, \dots, \{s^M_{r-1}(\overline{a}), t^M_{r-1}(\overline{a})\}\}$ is a matching of $K_{n+1, n}$.
    
    For an atomic formula $R(s(\overline{x}), t(\overline{x}))$, $p \| \neg R(s(\overline{a}), t(\overline{a}))$ if and only if \\ $\{s^M(\overline{a}), t^M(\overline{a})\} \notin p$, and, for a finite conjunction $\bigwedge_{j < r} \neg R(s_j(\overline{x}), t_j(\overline{x}))$, \\ $p \| \bigwedge_{j < r} \neg R(s_j(\overline{a}), t_j(\overline{a}))$ if and only if $\{s^M_j(\overline{a}), t^M_j(\overline{a})\} \notin p$ for all $j < r$.

    Finally, for a conjunction $\bigwedge_{j < r} R(s_j(\overline{x}), t_j(\overline{x})) \land \bigwedge_{j < r} \neg R(s'_j(\overline{x}), t'_j(\overline{x}))$, $p \| \bigwedge_{j < r} R(s_j(\overline{a}), t_j(\overline{a})) \land \bigwedge_{j < r} \neg R(s'_j(\overline{a}), t'_j(\overline{a}))$ if and only if \\ $p' := p \cup \{\{s^M_0(\overline{a}), t^M_0(\overline{a})\}, \dots, \{s^M_{r-1}(\overline{a}), t^M_{r-1}(\overline{a})\}\}$ is a matching of $K_{n+1, n}$ so that $\{(s'_j(\overline{a}))^M, (t'_j(\overline{a}))^M\} \notin p'$ for all $j < r$.

    All this combined results in an $L(M)$-definable property of $\overline{a}$ equivalent to $p \| \varphi(\overline{a})$.
\end{proof}

\begin{corollary}
    There exists an $L^*$-expansion $(M, R^M)$ satisfying the principle $\neg \ontoPHP(n, R)$ and the least number principle for all existential $L^*(M)$-formulas.
\end{corollary}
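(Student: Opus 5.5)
The corollary should follow by combining the preceding lemma, Theorem~\ref{definability of paris-wilkie forcing}, Corollary~\ref{strong principal theorem}, Theorem~\ref{conservative forcing leads to forcing expansion} and Theorem~\ref{truth and completeness lemma}; no new combinatorics is needed.

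\textbf{Step 1: obtain a generic filter.} The frame $P$ of finite matchings of $K_{n+1,n}$ is countable, since $M$ is countable. The forcing algebra $\forcingAlgebra$ consists of sets definable in a countable structure in a countable language, so it is countable as well. Hence $\forcingAlgebra$ contains only countably many dense sets. By the Rasiowa--Sikorski lemma there is a generic filter $G$, and we may moreover take $G$ to contain the empty condition $\varnothing$.

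\textbf{Step 2: $M[G]$ is an expansion of $M$ satisfying $\neg\ontoPHP(n,R)$.} By the preceding lemma, $\Vdash$ is a conservative forcing and $M[G]$ is defined. Theorem~\ref{conservative forcing leads to forcing expansion} then says that $M[G]$ is a forcing expansion. So the map $a\mapsto[a]_{\sim_{\Th(G)}}$ identifies the $L(M)$-reduct of $M[G]$ with $M$, and transporting the interpretation of $R$ along this isomorphism gives an $L^*$-expansion $(M,R^M)$. The lemma also gives $\varnothing\Vdash\neg\ontoPHP(n,R)$. Since $\varnothing\in G$, forcing completeness in Theorem~\ref{truth and completeness lemma} yields $(M,R^M)\vDash\neg\ontoPHP(n,R)$.

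\textbf{Step 3: the least number principle.} By Theorem~\ref{definability of paris-wilkie forcing}, $\Vdash$ is definable for the class $\Phi$ of quantifier-free $L^*(M)$-formulas. Corollary~\ref{strong principal theorem} then gives the $<$-least number principle in $M[G]$ for all formulas $\exists\overline{x}\,\varphi(x,\overline{x})$ with $\varphi\in\Phi$, which are exactly the existential formulas. The hypotheses of the Principal Theorem hold in this setting: $M$ is a model of true arithmetic, so $<$ is a linear order and $M$ satisfies the full least number principle for $L(M)$-formulas.

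\textbf{Main obstacle.} The only point needing care is Step~1, namely checking that the countability hypotheses of the framework hold so that generic filters exist. This reduces to the countability of $M$ and of the language, and is routine.
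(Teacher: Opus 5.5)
Your proof is correct and follows essentially the same argument the paper intends. The paper derives the corollary directly by applying Corollary~\ref{strong principal theorem} to the Paris--Wilkie forcing: the preceding lemma supplies conservativity, the definedness of $M[G]$, and $\varnothing\Vdash\neg\ontoPHP(n,R)$, while Theorem~\ref{definability of paris-wilkie forcing} supplies definability for quantifier-free formulas. Your additional steps (the Rasiowa--Sikorski existence of a generic filter and transporting $R$ along the forcing-expansion isomorphism) just spell out details the paper leaves implicit.
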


\begin{remark}
    If one replaces the poset of finite matchings by a poset of definable matchings of $K_{n+1, n}$ of possibly nonstandard sizes, one can get definability of $\Vdash$ for a class of formulas with quantifiers appropriately bounded (\cite[Section 4.2]{atseriasmuller15}).

    It is also clear that there is nothing ``special" about $\neg \ontoPHP(n, R)$, and, in fact, one can apply a very similar construction and get, for example, bijective mapping between the full universe $M$ and the set of all $m < n$, or linear ordering of the set of all $m < n$ with no endpoints (see \cite{riis94} for a general criterion on which combinatorial statements are amenable for such arguments).
\end{remark}

\subsection{Framework K: Boolean-valued models with random variables}\label{sectionkrajicek}\label{sec:32}

Forcing with random variables for bounded arithmetic was developed in~\cite{krajicek11} by  Krajíček. It is conceptually similar to the Boolean-valued presentation of the independence of Continuum Hypothesis by Dana Scott~\cite{scott67}. In contrast to the approach taken in framework AM, it uses uncountable $\aleph_1$-saturated models and complete Boolean algebras to define Boolean-valued models. 

We will review this method in some detail to be able to give a self-contained presentation of the construction of the shall PHP model in Section \ref{sec:shallow}. 

We observe in Section \ref{Kint}  that, formally  speaking,  Krajíček's forcing can be interpreted in the countable framework of AM. In Section \ref{sec:KAM} we comment on the difference between the Cohen forcing and the algebra used in \cite{krajicek11}; we use the fact proved in \cite{RH:bounded} that the measure algebra used in \cite{krajicek11} has (in a fixed cardinality) an alternative description as the unique random measure algebra associated with measure space $2^\kappa$ (which we denote $\B_\kappa$).

\subsubsection{$\aleph_1$-saturation}
We  start by defining the ambient model of arithmetic. This model is a model of true arithmetic in a very rich language and its role is analogous to the role of the ``ground'' model $M$ in the previous section. We will require it to be $\aleph_1$-saturated, which is a model theoretic property stating that every consistent set of formulas in one variable containing at most countably many parameters from the model can be simultaneously satisfied by a single element of the ground model. We will not use the property directly, and we will later state two other properties $\aleph_1$-saturation implies which are used in the framework instead. To distinguish this model from the model $M$ from the previous section, we will use the calligraphic $\Mc$.

\begin{definition}
    Let $L_{all}$ be the language with names for all relations, constants and functions on the natural numbers. The ambient model $\Mc$ is a fixed $\aleph_1$-saturated model of $\{\varphi\text{ an $L_{all}$-sentence}; \N \models \varphi\}.$ We will also assume that $\N$ is equal to the well-ordered initial segment of $\Mc$, so that nonstandard numbers in $\Mc$ can be denoted $\Mc\setminus \N$. 
\end{definition}

First, let us recall an important property of non-standard models.

\begin{proposition}[Overspill]\label{propoverspill}
    The set $\N$ is not definable in $\Mc$.
\end{proposition}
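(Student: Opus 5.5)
We argue by contradiction, using the fact that $\Mc$ satisfies every $L_{all}$-sentence true in $\N$, in particular the least number principle. Suppose $\N$ were definable in $\Mc$, say $\N = \set{a\in\Mc}{\Mc\models\psi(a,\bar b)}$ for some $L_{all}$-formula $\psi(x,\bar y)$ and parameters $\bar b\in\Mc$. The plan is to derive from $\psi$ a contradiction with induction, which holds in $\Mc$ for all $L_{all}$-formulas with parameters.

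First we check that $\Mc$ really satisfies the full least number principle for formulas with parameters. For each formula $\varphi(x,\bar y)$ of $L_{all}$, the sentence
\[(\forall \bar y)\bigl((\exists x)\varphi(x,\bar y)\to(\exists x)(\varphi(x,\bar y)\land(\forall z<x)\neg\varphi(z,\bar y))\bigr)\]
is true in $\N$, since $\N$ is well-ordered. Hence it belongs to the theory of $\Mc$. We apply it to $\varphi(x,\bar y):=\neg\psi(x,\bar y)$ with parameters $\bar b$.

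Next we show that $\neg\psi(x,\bar b)$ has a witness in $\Mc$. Here we need $\Mc\setminus\N\neq\emptyset$. This follows from $\aleph_1$-saturation, or simply from compactness: the type $\{x>n \; ; \; n\in\N\}$ is finitely satisfiable and has only countably many parameters, namely the constants for the numbers $n$ in $L_{all}$. So it is realized by some $c\in\Mc$, and $c\notin\N$. Therefore the least number principle yields a least $d\in\Mc$ with $\Mc\models\neg\psi(d,\bar b)$, that is, a least nonstandard element.

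Finally we rule out such a $d$. Since $d\notin\N$, we have $d\neq 0$, because $0\in\N$ is named by a constant. Hence $d=e+1$ for some $e$, since every nonzero number has a predecessor in $\N$ and this is a true sentence. By minimality of $d$ we get $e\in\N$. But then $d=e+1\in\N$, because $\N$ is closed under successor as an initial segment named by constants. This is a contradiction.

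The only delicate point is the identification of $\N$ with the standard initial segment, as assumed in the definition. It guarantees that $e\in\N$ implies $e+1\in\N$ and that elements below a standard number are standard. Beyond this bookkeeping, the argument is routine.
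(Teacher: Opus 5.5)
Your proof is correct. The paper gives no proof of this proposition; it recalls it as a standard fact about nonstandard models and then uses it in its ``overspill'' form, for instance in Claim~I of the Key Lemma. Your argument is the standard one the authors are implicitly relying on: apply the least number principle, which is true in $\N$ and hence in $\Mc$, to $\neg\psi(x,\bar b)$. This yields a least nonstandard element, whose predecessor would then have to be standard.

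One small correction concerns the phrase ``or simply from compactness''. Compactness only yields \emph{some} nonstandard model of true arithmetic. The standard model $\N$ is itself an $L_{all}$-model of true arithmetic, and there $\N$ is trivially definable. What puts a nonstandard element into $\Mc$ is $\aleph_1$-saturation, and that is what your next sentence actually uses, so the argument stands.

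The form in which the paper applies the proposition follows in one line from what you proved. Suppose a definable $P$ holds for all standard $s$, and $t\in\Mc\setminus\N$. Then $P(s_0)$ holds for some nonstandard $s_0<t$, since otherwise $\N=\{s\in\Mc : s<t\wedge P(s)\}$ would be definable.
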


Any model of true arithmetic, including $\Mc$, can be also understood as a model of finite set theory. Every number can be understood as a set of all indices of ones in its binary expansion. The subset relation $\subseteq$ is also definable, and so are set operations such as intersection, union, complement and cardinality. We say a set of elements of $\Mc$ is codeable, if it is represented by some number in $\Mc$. A form of comprehension is valid in $\Mc$, any definable subset of a codeable set is also codeable. We can similarly define codeable sequences, and a form of replacement also holds: If we have a definable function which maps a number $i$ to the $i$-th element, then any initial segment of this sequence with $t$-many elements, where $t\in \Mc$, is also a codeable sequence. We will sometimes not distinguish between numbers and the sets or sequences they represent, as we can see the set operations as being disjoint from the operations on numbers.

The $\aleph_1$-saturation implies the two properties of $\Mc$ which are important for the development of the forcing:

\begin{enumerate}
    \item Let $(a_i)_{i\in\N}$ be a sequence of elements of $\Mc$, then there is a $t\in \Mc\setminus \N$ and a sequence $(b_i)_{i<t}$ coded by an element of $\Mc$ such that $a_k=b_k$ for all $k\in \N$.
    \item Let $(A_k)_{k\in \N}$ be a sequence of definable subsets of $\Mc$ such that for all $k\in \N$ we have $\bigcap_{i<k}A_i \neq \varnothing$, then $\bigcap_{k\in \N} A_k \neq \varnothing$.
\end{enumerate}

\subsubsection{Boolean algebras}

The structures constructed from $\Mc$ will be two-sorted, Boolean-valued first-order structures. 

\begin{definition}
    A \emph{sample space} $\Omega$ is an infinite set coded by an element of~$\Mc$.
\end{definition}

From now on we will assume a specific sample space $\Omega$ is fixed. To obtain a complete Boolean algebra, we first consider the following Boolean algebra, which is provably incomplete due to the undefinability of $\N$ in non-standard models.

\begin{definition}
    Let $\A = \{A\subseteq \Omega; A\in \Mc\}$, that is $\A$ is the Boolean algebra of subsets of $\Omega$ coded in $\Mc$.
\end{definition}

A key observation in Krajíček's framework is that after taking a quotient of $\A$ by a suitable ideal, the resulting Boolean algebra is complete.

\begin{definition}\label{defibooleanalgebra}
    Let $A\in\A$, we define its \emph{probability} as the nonstandard fraction $\abs{A}/\abs{\Omega}$. We say a nonstandard fraction is \emph{infinitesimal} if its absolute value is less than any $1/k$, $k\in \N$ and we define an ideal on $\A$:
    \[\Ic=\{A \in \A; \text{ the probability of $A$ is infinitesimal}\}\]
    and we also define a Boolean algebra $\B$ as $\A/\Ic$. We also define a function 
    \begin{align*}
        \mu:\quad  \B &\to \RR \\
        [A]_{\Ic} &\mapsto \sup \: \{q \in \mathbb{Q}; q<\abs{A}/\abs{\Omega}\},
    \end{align*} where $[A]_{\Ic}$ is the equivalence class modulo $\Ic$ corresponding to an element $A\in\A$. Note that $\mu$ is well-defined precisely because $\Ic$ consists of elements of infinitesimal probability.

    We will denote the one element of $\B$ by $\1$ and the zero element by $\0$.
\end{definition}

Let us note that the value $\1$ is the only one with measure $1$ and the value $\0$ is the only one with measure $0$.

We will now present a proof of the completeness of the Boolean algebra $\B$ following the proof from~\cite[Lemma 1.2.1]{krajicek11}, with some parts of the proof rewritten in a more explicit manner.

\begin{lemma}[Key Lemma]
    The Boolean algebra $\B$ is complete.
\end{lemma}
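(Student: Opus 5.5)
Since $\B$ is a Boolean algebra and $\mu$ is strictly positive on nonzero elements (only $\0$ has measure $0$), the plan is to first show that $\B$ satisfies the countable chain condition, then show that $\B$ is $\sigma$-complete, and finally combine the two: a $\sigma$-complete ccc Boolean algebra is complete. For the last step, given any family $X\subseteq\B$, choose by Zorn's lemma a maximal antichain $W$ of nonzero elements each lying below some element of $X$. Then $W$ is countable by ccc, its supremum exists by $\sigma$-completeness, and maximality shows that this supremum is $\sup X$.

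\textbf{Countable chain condition.} I will first check that $\mu$ is finitely additive on disjoint elements. This holds because for disjoint $A,B\in\A$ the fractions add exactly, and taking standard parts commutes with addition. If $W$ is an antichain in $\B$, then for each $k\in\N$ the set of $w\in W$ with $\mu(w)>1/k$ has fewer than $k$ elements: finitely many pairwise disjoint such elements would give total measure at most $1$. Note that incompatibility modulo $\Ic$ only yields disjointness up to an infinitesimal set, but on finitely many elements I can subtract the infinitesimal pairwise intersections, which is harmless. Hence $W$ is countable.

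\textbf{$\sigma$-completeness.} This is the main step and uses property (1) of $\aleph_1$-saturation. Given $[A_i]_{\Ic}$ for $i\in\N$, I replace the $A_i$ by $C_k=\bigcup_{i<k}A_i$, which are increasing, and let $r=\lim_k\mu([C_k])\le 1$. By property (1), the sequence $(A_i)_{i\in\N}$ extends to a coded sequence $(B_i)_{i<t}$ with $t$ nonstandard. For each standard $k$, the following holds for all $m\le k$: $B_i=A_i$ for $i<m$, and $|\bigcup_{i<m}B_i|/|\Omega|\le r+1/m$. This is a definable property of $k$ in $\Mc$, and it holds for all standard $k$. By Overspill (Proposition~\ref{propoverspill}), it therefore holds up to some nonstandard $s\le t$. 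Define $D=\bigcup_{i<s}B_i\in\A$, which is coded since the sequence is coded. Then each $[A_i]\le[D]$, and $\mu([D])\le r+1/m$ for every standard $m$, so $\mu([D])=r$.

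To see that $[D]$ is the least upper bound, let $[E]$ be any upper bound. Then $A_i\setminus E\in\Ic$ for all standard $i$, and $\mu([D\setminus E])=\mu([D])-\mu([D\cap E])$. Since $[C_k]\le[D\cap E]$ for all $k$, we get $\mu([D\cap E])\ge r$, so $\mu([D\setminus E])=0$. Therefore $[D]\le[E]$.

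The main obstacle is making the overspill step correct. I need a definable condition whose truth at all standard levels forces an infinite union to have the right measure. I must bound the measure \emph{from above} by $r+1/m$ uniformly, rather than merely ensuring that the union contains all the $A_i$, since otherwise $D$ might be too large to be the least upper bound. A subtlety is that $r$ is a real number, so I should replace it by standard rationals $q_m$ slightly above $r$, for instance $q_m\ge r$ with $q_m-r<1/m$, and I would fold this sequence of rationals into the extension obtained from property (1) as well.
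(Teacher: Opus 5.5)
Your plan matches the paper's three claims: $\sigma$-completeness via property (1) and overspill, the countable chain condition from the measure, and reducing arbitrary suprema to countable ones through a maximal antichain. Your ccc argument (which needs only finite additivity and strict positivity), your Zorn's-lemma step and your least-upper-bound check for $[D]$ are all fine.

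The gap is in the overspill step, exactly where you located the difficulty. Overspill applies only to a formula of $\Mc$ with parameters from $\Mc$. The clause ``$B_i=A_i$'' mentions the external sequence; drop it, since it is not needed inside the formula. Likewise, $r$ is not an element of $\Mc$. Your remedy does not close this. In your condition, the only upper bound on $D=\bigcup_{i<s}B_i$ comes from the instance $m=s$. After substituting, that instance bounds $|D|/|\Omega|$ by $q'_s$ (up to $1/s$), where $q'_s$ is a nonstandard-index term of the internal extension of $(q_m)_{m\in\N}$. Property (1) says nothing about nonstandard terms, so $q'_s$ could be, say, $2$. The standard instances bound only the partial unions $C_m$, not $D$. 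So you get no bound $\mu([D])\le r$, and without it your least-upper-bound argument collapses.

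The repair is to let the standard indices do the bounding. For example, apply overspill to
\[
\Phi(k):\quad (\forall i<k)\,(B_i\subseteq\Omega)\ \wedge\ (\forall m)\Bigl(1\le m\le k\rightarrow \Bigl|\bigcup_{i<k}B_i\Bigr|/|\Omega|\le q'_m+1/m\Bigr).
\]
This holds for every standard $k$, because $|C_k|/|\Omega|$ is infinitely close to $\mu([C_k])\le r\le q_m$. For a nonstandard $s$ with $\Phi(s)$, the standard instances $m$ give $\mu([D])\le q_m+1/m<r+2/m$. The clause $B_i\subseteq\Omega$ also ensures $D\in\A$, which you had not secured. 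Alternatively, you could require $q'$ to be nonincreasing inside the formula, or use property (2) to find a single $\rho\in\Mc$ with standard part $r$ and use it as the parameter.

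The paper avoids the external limit altogether. It first thins the increasing sequence so that $|B_m|/|\Omega|\le|B_l|/|\Omega|+1/l$ for standard $m\ge l$. It then applies overspill to the purely internal Cauchy condition ``$B_i\subseteq B_s$ and $|B_s|/|\Omega|\le|B_i|/|\Omega|+1/i$ for all $i\le s$'', and takes $C=B_s$ for a nonstandard such $s$.
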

\begin{proof}
    The proof proceeds by first establishing three claims.

    \textbf{Claim I:} The algebra $\B$ is a $\sigma$-algebra and $\mu$ is a $\sigma$-additive measure.
    \textit{Proof of claim.} Let $\{b_k\}_{k\in \N}$ be a sequence of elements of $\B$ of the form $b_k = [B_k]_{\Ic}$. Without loss of generality we can assume that $B_i\subseteq B_j$ whenever $i\leq j$ as otherwise we could consider the family \[c_k = \bigcup_{i\leq k} b_i,\]
    this implies that the sequence $\{\mu(b_k)\}_{k\in\N}$ is increasing and bounded, thus convergent and therefore Cauchy, that is for every $k\in \N $ there is $n_k\in \N$ such that 
    for ever $m\geq l \geq n_k$ we have 
    \[\frac{\abs{B_l}}{\abs{\Omega}} \leq \frac{\abs{B_{m}}}{\abs{\Omega}}\leq \frac{\abs{B_l}}{\abs{\Omega}} + \frac{1}{k},\]
    by taking the subsequence $\{b_{n_k}\}_{k\in \N}$ we can assume that $k=n_k$.

    We will find an element $C\in \A$ such that for every  $k\in \N$ we have that $B_k \subseteq C$ and $\mu([C]_{\Ic}) = \lim_{k\to \infty} \mu(b_k)$, these properties imply that $[C]_{\Ic}$ is the supremum of $\{b_k; k\in \N\}$. By the property (1) of $\Mc$ we get a sequence $\{B_i\}_{i<t}\in \Mc,$ $t\in \Mc\setminus \N$. Consider the following definable property $P(s)$:
    \[B_s\in \A \land (\forall i\leq s)\left (B_i\subseteq B_S \land \frac{\abs{B_i}}{\abs{\Omega}} \leq \frac{\abs{B_{s}}}{\abs{\Omega}}\leq \frac{\abs{B_i}}{\abs{\Omega}} + \frac{1}{i}\right),\]
    by the overspill, since it $P$ holds for every number in $\N$ there has to be some nonstandard $s_0 <t$ such that $P(s_0)$. The value $C=B_{s_0}$ then satisfies the required properties.

    \textbf{Claim II:} Any antichain in $\B$ is at most countable, that is $\B$ satisfies the~ccc condition.\\
    \textit{Proof of claim.} Let $C$ be an antichain in $\B$. By the $\sigma$-additivity of $\mu$ we can observe, that the set $C_n = \{b\in C; 1/(n+1) < \mu(b)\leq 1/n\}$ is finite for each $n\geq 1$.

    \textbf{Claim III:} For any $D\subseteq\B$ there is a countable $C\subseteq \B$ such that \[\{b \in \B; \forall c \in C : c \leq b\}= \{b\in \B; \forall d \in D: d\leq b\},\]
    that is the sets of upper bounds of $D$ and $C$ are the same.\\
    \textit{Proof of claim.} Let $D\subseteq \B$ and let $I_D$ be the ideal generated by $D$, that is the least superset of $D$ which is closed downwards and also closed under (finite) unions. It is easy to check that \[\{b\in \B; \forall d \in D: d\leq b\} = \{d\in B; \forall d\in I_D: d\leq b\},\]
    and it is well-known that a set of upper bounds of an ideal in a Boolean algebra is the same as the set of upper bounds of any maximal ideal contained within it, and by Claim II any such antichain is at most countable.

    The statement then follows by combining the claims, as any infinite supremum can be computed as a supremum over a countable set, and these suprema are available by Claim I.
\end{proof}

\subsubsection{Two-sorted Boolean-valued models}\label{sec:BooleanK}

Having defined the Boolean algebra $\B$, we are ready to define the Boolean valued structures $K(F,G)$, which are parametrized by a family of random variables $F$ and a family of random functions $G$. The universe of the structure $K(F,G)$ will consist of the ``number sort'' which is equal to $F$ and a ``set sort'' (more precisely ``function sort'') which is equal to $G$. These structures could be interpreted as generic forcing associates of $\Mc$ and their truth-values take place in $\B$.\footnote{Once a  (generic) filter on $\B$ is chosen, the truth values will collapse it to $\{0,1\}$. See Sections \ref{sec:41} and \ref{sec:42} for more details.}

In what follows all languages $L$ which we will consider are subsets of the ``maximal'' language $L_{all}$ which has (naturally interpreted) symbols for all constants, functions and relations over a fixed model of arithmetics. 

We will now define a two-sorted  extension of any language $L$ such that it will include both a number sort and a set sort. This is done so that we can in principle interpret two-sorted theories of bounded arithmetic such as $V^0_1$.

\begin{definition}
    Let $L$ be given. We define the two-sorted language $L^2$ as the language consisting of every relational and function symbol of $L$, but has two sorts of variables, the number sort (denoted by lowercase letters $x,y,z,\dots$) and the set sort (denoted by upper case letters $X,Y,Z,\dots$), where the set sort variables act like function symbols, whose arguments are from the number sort and are taking values in the number sort.
\end{definition}

Here, the ``set sort'' might be a confusing name, as in our formalism a more fitting name could be the ``function sort'', but the way the variables of the second sort are usually interpreted in theories like $V^0_1$ is as bounded sets of elements of the number sort. This can be achieved by requiring that the functions computed by the set sort variables only output $0$ or $1$.

To continue with the definition of $K(F,G)$, we will first ``truncate'' the model $\Mc$ to falsify the totality of the exponential function: without this step, it would not be possible to reinterpret the set sort because every element of the set sort might be represented by a large enough element of the number sort. To do so, we will fix a nonstandard number $n\in \Mc\setminus \N$.

\begin{definition}
    A \emph{large canonical model} is the structure
    \[\Mc_n = \{m\in \Mc;\ \exists t\in \Mc \setminus \N : m\leq 2^{n^{1/t}}\},\]
    and $L_n\subseteq L_{all}$ is the language containing all relation-symbols of $L_{all}$ and every $L$-function symbol $f$ satisfying $f[\Mc_n]\subseteq \Mc_n$.
\end{definition}

\begin{definition}
    Let $L$ be given. An non-empty set of functions $F$ is called an $L$-closed family of random variables if it satisfies:
    \begin{enumerate}
        \item $F\subseteq \Mc$, that is, every element of $F$ is a function coded by an element of $\Mc$
        \item For every $\alpha\in F$ the domain of $\alpha$ is a subset of $\Omega$ and therefore $\alpha$ is a partial function from $\Omega$ to $\Mc$. If a given $\omega\in\Omega$ is in the domain of $\alpha$, then we say $\alpha$ is defined on $\omega$, otherwise we say its undefined on $\omega$.
        \item For every $\alpha\in F$ the value \[\abs{\{\omega\in \Omega; \text{$\alpha$ is undefined on $\omega$}\}}/\abs{\Omega}\]
        is infinitesimal.
        \item For every $L$-function symbol $f$ of arity $k\in \N$ and every $\alpha_1,\dots,\alpha_k$, the partial function 
         \[f(\alpha_1,\dots,\alpha_k) : \omega \mapsto f(\alpha_1(\omega),\dots,\alpha_k(\omega))\] is in $F$ (it maps $\omega$ to $f(\alpha_1(\omega),\dots,\alpha_k(\omega))$ whenever $\alpha_1(\omega),\dots,\alpha_k(\omega)$ are defined).
    \end{enumerate}
\end{definition}

\begin{definition}
    A non-empty set of functions $G$ is called a family of random functions if every element $\Theta$ of $G$ satisfies $\Theta\in \Mc$ and to every $\omega\in \Omega$ it assigns a function $\Theta_\omega$ whose domain is a subset of $\Mc_n$ and codomain is $\Mc_n\cup\{\text{undefined}\}$. That is, $\Theta_\omega$ is a partial function from $\Mc_n$ which can include some elements of $\Mc_n$ in its domain for which it it explicitly undefined, and it is also undefined on the elements which are not in its domain.
\end{definition}

\begin{definition}
    Let a language $L$ be given. Let $F$ be an $L$-closed family of random variables and $G$ is a family of random functions. We say $G$ is $F$-compatible (or that $F$ is $G$-closed) if for every $\Theta \in G$ and every $\alpha \in F$ we have that the function
    \[\Theta(\alpha): \omega\mapsto \begin{cases}
        \text{undefined} & \text{$\alpha$ is undefined on $\omega$},\\
        \Theta_\omega(\alpha(\omega)) & \text{$\alpha$ is defined on $\omega$ and } \alpha(\omega)\in \text{dom}(\Theta_\omega),\\
        0 & \text{otherwise,}
    \end{cases}\]
    is in $F$. Note that the second case of the definition with output $\Theta_\omega(\alpha(\omega))$ can result in the output being undefined.
\end{definition}

\begin{definition}\label{def:Boolean}
    Let $L$ be a given language and let $F$ be a family of random variables and $G$ be an $F$-compatible family of random functions. We define the structure $K(F,G)$ in the language $L^2$ as the Boolean valued structure whose universe consist of two sorts, the number sort $F$ and the set sort $G$. The valuation of truth values takes place in $\B$ and is defined as follows. Note that we will define the evaluation of atomic sentences only in the case where each closed term is a member of $F$, as the $L$-closedness and $G$-closedness of $F$ makes every closed term equal to some member of $F$.
    \begin{itemize}
        \item $\bbl \alpha = \beta \bbr = \{\omega\in \Omega; \alpha(\omega)=\beta(\omega)\text{ and } \text{$\alpha(\omega)$, $\beta(\omega)$ are defined}\}/\Ic$, where $\alpha,\beta \in F$,
        \item $\bbl R(\alpha_1,\dots,\alpha_k) \bbr$ is defined as
        \[\{\omega\in \Omega; R(\alpha_1(\omega),\dots,\alpha_k(\omega)) \text{ and } \alpha_1(\omega),\dots,\alpha_k(\omega) \text{ are defined}\}/\Ic,\] where $\alpha_1,\dots,\alpha_k \in F$ and $R$ is an $k$-ary relation symbol from $L$
        \item $\bbl-\bbr$ commutes with $\land,\lor$ and $\lnot$,
        \item $\bbl(\forall x)\varphi(x)\bbr=\Land_{\alpha\in F}\bbl \varphi(\alpha)\bbr$ and $\bbl(\forall X)\varphi(X)\bbr=\Land_{\Theta\in G}\bbl \varphi(\Theta)\bbr$,
        \item $\bbl(\exists x)\varphi(x)\bbr=\Lor_{\alpha\in F}\bbl \varphi(\alpha)\bbr$ and $\bbl(\exists X)\varphi(X)\bbr=\Lor_{\Theta\in G}\bbl \varphi(\Theta)\bbr$.
    \end{itemize}

    We call any $L^2$-sentence $\varphi$ with parameters from $F$ and $G$ \emph{valid} if it satisfies $\bbl\varphi\bbr=\1$.
\end{definition}

\subsubsection{An example: The Shallow PHP Model}\label{sec:shallow}

In this section we will present as an example the construction of the shallow PHP model  $K(\FPHP,\GPHP)$ originally from~\cite[Chapter 20]{krajicek11}. Unlike in Krajíček's original book, we prove all the required results about this structure explicitly without referencing earlier parts of the book. The model is constructed so that the bijective PHP principle\footnote{The $\text{F}$ in the name of the formula $\ontoFPHP$ stands for function and is added to contrast with the $\ontoPHP$ formula which is stated for relations.}, which is the universal closure of the following formula
\begin{align*}
    \ontoFPHP(x,X)\equiv &(\exists p \leq x+1)( X(p)\geq x)\lor\\
                     &(\exists p_1, p_2\leq x+1)(p_1\neq p_2 \land X(p_1)= X(p_2))\lor\\
                     &(\exists h \leq x)(\forall p\leq x+1)(X(p)\neq h),
\end{align*}
is assigned the Boolean value $\0$, yet induction and comprehension for open $L_n^2$-formulas is preserved. The bijective PHP principle states that $X$ is not a bijective function from $\{0,\dots,x+1\}$ to $\{0,\dots,x\}$. In the context of the $\PHP$ principle, the inputs of a function are called pigeons and the outputs are called holes.

The main intuition behind the construction is as follows: To falsify the sentence $\PHP(n,\Delta)$ for some set sort object $\Delta$, we will pick $\Delta$ to be a kind of a blurred version of a randomly chosen partial injection from $\{0,\dots,x+1\}$ to $\{0,\dots,x\}$ which maps all but one pigeon injectively into all holes. The whole structure is designed so that the number sort objects in $\FPHP$ fail to uncover the missing pigeon in the domain with probability infinitesimally close to $1$, but since $\Delta$ consists of a random partial injection, they are also unable to witness a collision.

Our goal is to prove that the universal closure of $\ontoFPHP$ obtains the truth-value $\0$ in the to-be-defined structure $K(\FPHP,\GPHP)$ but every axiom from the open comprehension scheme and the open induction scheme obtains the truth-value $\1$. This section is organized as follows:

\begin{itemize}
    \item The structure $K(\FPHP,\GPHP)$ is defined in a sequence of definitions and lemmas starting with Definition~\ref{defiOPHP} and concluding with Lemma~\ref{lemmafphpgphp}.
    \item The validity of open comprehension starts with Definition~\ref{defirepresents} and concludes with Lemma~\ref{lemmaopencomp}.
    \item The validity of open induction is Lemma~\ref{lemmaopenind}.
    \item The failure of $\ontoFPHP$ consists of Definition~\ref{defidelta} and Theorem~\ref{theoremofphp}.
\end{itemize}

\begin{definition}\label{defiOPHP}
    The sample space $\OPHP$ consists of all \[\omega \subseteq \{0,\dots,n+1\}\times \{0,\dots,n\}\] such that
    \begin{enumerate}
        \item $\omega$ is a graph of a partial injection function from $\{0,\dots,n+1\}$ to $\{0,\dots,n\}$
        \item $\abs{\omega}=n$.
    \end{enumerate}

    We will use the notation $\omega(i,j)$ or $\omega(i)=j$ instead of $(i,j)\in\omega$.
\end{definition}

\begin{definition}
    A $\PHP$-tree $T$ is a finite tree whose inner nodes are labeled either by 
    \[i\to\: ?\]
    where $i \leq n+1$ or 
    \[?\to j\]
    where $j\in [n]$. If a node is labeled $i\to \: ?$ it has $n$ outgoing edges labeled by all $j\leq n$: the edge $j$ corresponds to the case when $\omega(i)=j$ holds. If a node is labeled by $?\to j$ it has $n+1$ outgoing edges labeled by all $i\in [n+1],$ with edge $i$ corresponding to the case $\omega(i)=j$.

    Any $\omega\in \OPHP$ determines a unique path in $T$ given by following the labels with edges corresponding to values of $\omega$. The path may not finish in a leaf, this happens if a query $i\to \: ?$ is posed when $i$ is not in the domain of $\omega$. If a leaf is reached we shall denote it $T(\omega)$, otherwise we leave it undefined. The depth of $T$ is equal to the maximal number of edges from root to a leaf.
\end{definition}

By induction on the depth of a $\PHP$-tree $d$, and induction on the number of restricted pigeon--hole pairs one obtains the following.

\begin{lemma}\label{lemmaphpdefined}
    Let $T$ be a $\PHP$-tree of depth $d$, then
    \[\Pr_{\omega\in\OPHP}[T(\omega) \text{ is undefined}]\leq \frac{d+1}{n+1}.\]
\end{lemma}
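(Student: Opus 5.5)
The plan is to prove the bound by following the path of a uniformly random $\omega\in\OPHP$ down $T$ one query at a time, and computing the probability that the walk stops early as a telescoping product. I will work with the reading given in the informal description before Definition~\ref{defiOPHP}: $\omega$ maps \emph{all but one} pigeon bijectively onto \emph{all} holes. Then a hole query $?\to j$ is always answered, and the walk fails only when a pigeon query $i\to\: ?$ hits the unique unmatched pigeon. I flag now that this identification is the step I am least sure of; see the last paragraph.

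\textbf{Step 1: reduce to a single path and answered queries.} I first normalise the tree so that no inner node on any path asks about a pigeon or hole whose partner is already determined by the earlier answers. Such a node can be replaced by the subtree at its forced edge. This does not change $T(\omega)$ or its definedness, and it does not increase the depth. After this, each node at depth $k$ reached by a walk corresponds to a partial matching $\rho$ with exactly $k$ pigeon--hole pairs. The set $E_\rho=\{\omega\in\OPHP;\ \rho\subseteq\omega\}$ is exactly the set of $\omega$ whose walk reaches that node.

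\textbf{Step 2: symmetry given a node.} Fix such a $\rho$ with $|\rho|=k$. The permutations of pigeons and holes that fix $\rho$ pointwise act transitively on $E_\rho$ and preserve the uniform measure. Hence, conditioned on $\omega\in E_\rho$, the missing pigeon is uniformly distributed among the pigeons not in $\mathrm{dom}(\rho)$. There are $n+1-k$ such pigeons, counting so that $\omega$ matches $n$ of the $n+1$ relevant pigeons.

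It follows that a fresh pigeon query at this node leaves the walk undefined with conditional probability exactly $\frac{1}{n+1-k}$. Under the all-holes reading, a fresh hole query is always answered with probability $1$.

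\textbf{Step 3: telescoping induction on depth.} Let $p_k$ be the probability that the walk is still defined after $k$ queries; a walk that ends at a leaf earlier counts as still defined. I will show by induction on $k$ that
\[p_k\ \ge\ \prod_{m<k}\Bigl(1-\frac{1}{n+1-m}\Bigr)=\frac{n+1-k}{n+1}.\]
The inductive step conditions on the depth-$k$ node reached and applies Step~2. The per-node survival probability is at least $\frac{n-k}{n+1-k}$, and it averages correctly because the events $E_\rho$ partition the surviving $\omega$'s. With $k=d$ this gives
\[\Pr[T(\omega)\text{ undefined}]\le \frac{d}{n+1}\le\frac{d+1}{n+1}.\]

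\textbf{Main obstacle.} The substance of the argument is pinning down the sample-space convention so that only one pigeon, and no hole, is unmatched, as the surrounding text intends. If literally two pigeons and one hole are unmatched, the same telescoping gives only roughly $2d/(n+2-d)$. In that case the proof must be adjusted to whatever normalisation of $\OPHP$ the paper actually uses; the slack $+1$ in the stated bound is presumably there to absorb one such off-by-one.
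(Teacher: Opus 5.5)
Your proof is correct under the convention you adopt ($n+1$ pigeons, $n$ holes, each $\omega$ a bijection from all but one pigeon onto all holes), and it is essentially the paper's argument: induction on depth while tracking the restriction $\rho$ (the paper's ``number of restricted pigeon--hole pairs''), with conditional failure probability $\frac{1}{n+1-k}$ at a fresh pigeon query; it even gives the sharper bound $\frac{d}{n+1}$. Your convention is forced, not merely convenient: under the literal reading ($n+2$ pigeons, $n+1$ holes, $\abs{\omega}=n$) the depth-$2$ tree querying pigeons $0$ and $1$ is undefined with probability $\frac{4n+2}{(n+1)(n+2)}>\frac{3}{n+1}$ for $n>4$, so no adjustment of the proof could rescue the stated bound there; moreover, the paper's edge counts for $\PHP$-trees, its remark that only pigeon queries can fail, and the surjectivity used in Theorem~\ref{theoremofphp} all match your reading.
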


\begin{definition}
    A labeled $\PHP$-tree $(T,\ell)$ is a pair of a $\PHP$-tree $T$ and a labeling of leaves of $T$ by elements of $\Mc_n$. A labeled tree then computes a partial function from $\OPHP$ to $\Mc_n$, where the value at $\omega$ is the label of $T(\omega)$ whenever $T(\omega)$ is defined and undefined otherwise.
    
    We will abuse the notation somewhat and sometimes denote a labeled $\PHP$-tree simply by $T$ and its value on $\omega$ simply as $T(\omega)$ keeping the labeling implicit.
\end{definition}

\begin{definition}
    Let $\FPHP$ be the set of all partial functions \[\alpha:\OPHP\to \Mc_n\] such that there is a labeled $\PHP$-tree $T$ satisfying the following:
    \begin{itemize}
        \item For every $\omega\in\OPHP$,  $\alpha(\omega)$ is defined whenever $T(\omega)$ is defined.
        \item For every $\omega\in \OPHP$, if $T(\omega)$ is defined, then $\alpha(\omega)=T(\omega)$.
        \item The depth of $T$ is bounded above by $n^{1/t}$ for some $t\in \Mc\setminus \N$.
    \end{itemize}
\end{definition}

\begin{definition}
    Let $m\in\Mc$ and let $\hat \beta =(\beta_0,\dots,\beta_{m-1})\in \Mc$ be an $m$-tuple of elements of $\FPHP$. For every $\omega\in \OPHP$ we shall define a partial function
    \begin{align*}
        \hat \beta_\omega:\Mc_n &\to \Mc_n \cup \{\text{undefined}\}\\
        i &\mapsto \begin{cases}
        \text{undefined}&i<m\text{ and $\beta_i(\omega)$ is undefined}\\
        \beta_{i}(\omega)&i<m\text{ and $\beta_i(\omega)$ is defined}\\
        0& \text{otherwise.}
        \end{cases}
    \end{align*}

    The family $\GPHP$ consists of all random functions $\Theta$ for which there exists an $m$-tuple $\hat \beta=(\beta_0,\dots,\beta_{m-1})$ of elements of $\FPHP$, where $m\in \Mc$, such that $\Theta$ assigns to $\omega$ the function $\hat \beta_\omega$.
\end{definition}

\begin{lemma}
    The family $\FPHP$ is $L_n$-closed.
\end{lemma}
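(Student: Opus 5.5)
To prove that $\FPHP$ is $L_n$-closed, I will check the four clauses of the definition of an $L$-closed family of random variables in turn. The first three are nearly immediate from the definition of $\FPHP$. The real work is clause (4), closure under $L_n$-function symbols, which I will handle by combining $\PHP$-trees.

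\emph{Clauses (1)--(3).} Clause (1) requires each $\alpha\in\FPHP$ to be coded in $\Mc$. This holds because $\OPHP$ and $\Mc_n$ are coded sets and elements of $\FPHP$ are taken as partial functions in $\Mc$. Clause (2) holds because every $\alpha$ is a partial function on $\OPHP$ by definition. For clause (3), let $T$ be the labeled $\PHP$-tree witnessing $\alpha\in\FPHP$, of depth $d\le n^{1/t}$ with $t$ nonstandard. The set where $\alpha$ is undefined is contained in the set where $T(\omega)$ is undefined. By Lemma~\ref{lemmaphpdefined} the latter has probability at most $(d+1)/(n+1)\le (n^{1/t}+1)/(n+1)$, which is infinitesimal since $n^{1/t}/n\le n^{-1/2}$ for $t\ge 2$.

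\emph{Clause (4).} Fix a $k$-ary $L_n$-function symbol $f$ with $k\in\N$, and $\alpha_1,\dots,\alpha_k\in\FPHP$ witnessed by trees $T_1,\dots,T_k$ of depths $d_i\le n^{1/t_i}$. I will build a composed tree $T$. Start with $T_1$. At each leaf of $T_1$, append a copy of $T_2$, then at each leaf of the result append a copy of $T_3$, and so on through $T_k$. Label each final leaf $v$ by $f(\ell_1,\dots,\ell_k)$, where $\ell_i$ is the label of the leaf of $T_i$ on the path to $v$. This label lies in $\Mc_n$ because $f[\Mc_n]\subseteq\Mc_n$ by the definition of $L_n$. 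The construction is definable in $\Mc$, so $T$ is coded there.

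Two points need care in the composition.
\begin{itemize}
\item Some queries in an appended copy of $T_i$ may already be answered on the path above. If $\omega$'s path reaches a leaf, it answers those queries consistently, so I can either keep the redundant queries (only edges consistent with $\omega$ are ever followed) or prune them.
\item Along the path of an $\omega$, $T(\omega)$ is defined exactly when every $T_i(\omega)$ is defined, and then $T(\omega)=f(T_1(\omega),\dots,T_k(\omega))$.
\end{itemize}
Whenever each $T_i(\omega)$ is defined, each $\alpha_i(\omega)$ is defined and equals $T_i(\omega)$. Hence $f(\alpha_1,\dots,\alpha_k)(\omega)$ is defined and agrees with $T(\omega)$, so the first two conditions in the definition of $\FPHP$ hold.

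\emph{Depth bound.} This is the step I expect to be the main, though mild, obstacle. The depth of $T$ is at most $d_1+\dots+d_k\le k\,n^{1/t}$ with $t=\min_i t_i$, which is nonstandard. I need some nonstandard $t'$ with $k\,n^{1/t}\le n^{1/t'}$. Taking $t'=\lfloor t/2\rfloor$, which is still nonstandard, gives $n^{1/t'}\ge n^{2/t}=(n^{1/t})^2$. It remains to check $(n^{1/t})^2\ge k\,n^{1/t}$, i.e. $n^{1/t}\ge k$. If $n^{1/t}$ is nonstandard this is immediate since $k$ is standard. If instead $n^{1/t}$ is standard, I replace $t$ by a smaller nonstandard $s$ chosen so that $n^{1/s}$ is nonstandard but still small. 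Such an $s$ exists, for instance $s=\lfloor\sqrt{\log n}\rfloor$, which is nonstandard since $n$ is nonstandard, and for which $n^{1/s}=2^{\sqrt{\log n}}$ is nonstandard. Then $d_1+\dots+d_k\le k\,n^{1/s}\le n^{2/s}$, and $\lfloor s/2\rfloor$ is nonstandard, which finishes the bound.
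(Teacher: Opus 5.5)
Your proof is correct and follows essentially the paper's argument: concatenate $T_1,\dots,T_k$, label each leaf by $f$ applied to the leaf labels along its path, and bound the depth by $k\,n^{1/t}\le n^{2/t}$. Your checks of clauses (1)--(3), and your treatment of the case where $n^{1/t}$ is standard (the paper's step $k\,n^{1/t}\le (n^{1/t})^2$ silently needs $n^{1/t}\ge k$), fill in details the paper leaves implicit; the one small slip is that $\Mc_n$ is not a coded set (it is a cut with no maximum), so clause (1) rests only on elements of $\FPHP$ being coded partial functions on the coded set $\OPHP$.
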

\begin{proof}
    Let $f\in L_n$ be a $k$-ary function symbol for some $k\in\N$ and let $\alpha_1,\dots,\alpha_k\in \FPHP$ computed by labeled $\PHP$-trees $T_1,\dots,T_k$ all of depth at most $n^{1/t}$ for some $t\in \Mc\setminus \N$. To compute the function $f(\alpha_1,\dots,\alpha_k)$ we can simply concatenate the labeled $T_i$ for every $1\leq i \leq k$.

    To be precise, let $T$ be the obtained by replacing every leaf of $T_1$ with a copy of $T_2$, every new leaf with a copy of $T_3$ and so on. The labels of leaves of $T$ are then taken to be simply $f(\alpha_1(\omega),\dots,\alpha_k(\omega))$ on every path which is compatible with $\omega$. The depth of $T$ is then
    \[k\cdot n^{1/t} \leq (n^{1/t})^2= n^{1/(t/2)}.\]
    Since $t/2\in \Mc\setminus\N$, we get that $T$ computes a function in $\FPHP$.
\end{proof}

\begin{lemma}\label{lemmafphpgphp}
    The family $\FPHP$ is $\GPHP$-closed.
\end{lemma}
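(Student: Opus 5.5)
We need to show: for $\Theta\in\GPHP$ given by $\hat\beta=(\beta_0,\dots,\beta_{m-1})$ and $\alpha\in\FPHP$, the random variable $\Theta(\alpha)$ lies in $\FPHP$. The plan is to build a single labeled $\PHP$-tree of small depth that agrees with $\Theta(\alpha)$ wherever it reaches a leaf. Let $T_\alpha$ be a labeled $\PHP$-tree witnessing $\alpha\in\FPHP$, of depth at most $n^{1/t_0}$. For each $i<m$ let $T_i$ be a tree witnessing $\beta_i\in\FPHP$, of depth at most $n^{1/t_i}$. Since the tuple $\hat\beta$ is coded in $\Mc$, I would choose the witnessing trees definably in $\Mc$, for instance the $<$-least code of a witnessing tree of depth $n^{1/t}$ for the least suitable representation, so that the sequence $(T_i)_{i<m}$ is itself coded in $\Mc$.

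The construction is a concatenation: start with $T_\alpha$. At a leaf of $T_\alpha$ with label $a$ there are two cases.
\begin{itemize}
\item If $a<m$, attach a copy of $T_a$ and keep its labels.
\item If $a\ge m$, leave the leaf with label $0$.
\end{itemize}
Call the result $T$. Verifying agreement is then direct from the definitions of $\hat\beta_\omega$ and $\Theta(\alpha)$:
\begin{itemize}
\item If $T(\omega)$ is defined, then $T_\alpha(\omega)=a$ is defined, so $\alpha(\omega)=a$.
\item If $a\ge m$, then $\Theta(\alpha)(\omega)=0$, which matches the leaf label.
\item If $a<m$, then $T_a(\omega)$ is defined, so $\beta_a(\omega)=T_a(\omega)$ and $\Theta(\alpha)(\omega)=\beta_a(\omega)$, again matching the label.
\end{itemize}
Hence $\Theta(\alpha)$ is defined wherever $T(\omega)$ is, and agrees with $T$ there, as required by the definition of $\FPHP$.

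The main obstacle is the depth bound, because there are nonstandardly many trees $T_i$. Their exponents $t_i$ need not be bounded below uniformly by a single nonstandard number in an obvious way. Here is how I would handle it.

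First, consider the set $S=\{t\in\Mc : \text{for all } i<m,\ \beta_i \text{ has a witnessing tree of depth} \le n^{1/t}\}$. This set is definable in $\Mc$, since $\hat\beta$ is a parameter and the witnessing relation is coded. It contains every standard $k\in\N$, because each $t_i$ is nonstandard and so $n^{1/t_i}\le n^{1/k}$.

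Next, by overspill (Proposition~\ref{propoverspill}), $S$ contains some nonstandard $t_1$. Using this $t_1$ I pick the trees $T_i$ uniformly and definably, which also settles the coding issue raised above.

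Then the depth of $T$ is at most
\[
n^{1/t_0}+n^{1/t_1}\le 2n^{1/t'}\le n^{1/(t'/2)},
\qquad t'=\min(t_0,t_1),
\]
and $t'/2$ is nonstandard, exactly as in the proof of $L_n$-closedness.

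Finally, the labels of $T$ are in $\Mc_n$ and $T$ is coded in $\Mc$, so $\Theta(\alpha)\in\FPHP$.
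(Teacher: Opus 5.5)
Your proof is correct and follows the paper's argument: graft a copy of $T_a$ onto each leaf of $T_\alpha$ labeled $a<m$, relabel the remaining leaves $0$, and use overspill to get one nonstandard exponent bounding the depth of all the grafted trees. Your definable set $S$ and definable choice of the trees $T_i$ spell out the coding and overspill step that the paper compresses into ``By overspill''.
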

\begin{proof}
    Consider $\alpha\in \FPHP$ computed by the labeled $\PHP$-tree $T$ and $\Theta\in\GPHP$ be computed by an $m$-tuple $\hat\beta=(\beta_0,\dots,\beta_{m-1})$ where each $\beta_i$ is computed by a labeled $\PHP$-tree $T_i$ for every $i <m$. We will show that 
    \[\Theta(\alpha): \omega\mapsto \begin{cases}
        \text{undefined} & \text{$\alpha$ is undefined on $\omega$},\\
        \Theta_\omega(\alpha(\omega)) & \text{$\alpha$ is defined on $\omega$ and } \alpha(\omega)\in \text{dom}(\Theta_\omega),\\
        0 & \text{otherwise,}
    \end{cases}\]
    is in fact in $\FPHP$.

    By overspill, there is a $t\in \Mc\setminus \N$ such that the depth of all $T,T_1,\dots,T_{m-1}$ is bounded by $n^{1/t}$. Consider the labeled $\PHP$-tree $T'$ which is obtained from $T$ by replacing every leaf labeled by $i<m$ with $T_i$ and relabeling every leaf labeled by $i\geq m$ to $0$. Consider $\omega \in \OPHP$, if $\alpha$ is undefined on $\omega$ then $T'(\omega)$ is also undefined. If $\alpha(\omega)$ is defined, then the computation of $T'(\omega)$ continues by evaluating the tree $T_{\alpha(\omega)}$. By the fact that
    \[\Theta_\omega(\alpha(\omega)) = \hat \beta_\omega(\alpha(\omega)) = \beta_{\alpha(\omega)}(\omega)\]
    we get that $T'$ computes $\Theta(\alpha)$ correctly. The depth of $T'$ is bounded by $2\cdot n^{1/t}$ which is obviously bounded above by some $n^{1/t'}$, where $t'\in \Mc\setminus \N$, and thus $\Theta(\alpha)\in \FPHP$.
\end{proof}

\begin{definition} \label{defirepresents}
    Let $f$ be a with domain $\OPHP$ and codomain $\{0,1\}$ and let $T$ be a labeled $\PHP$-tree where each label is either $0$ or $1$. We say $T$ represents $f$ if for every $\omega\in\OPHP$ where $T(\omega)$ is defined we have $T(\omega)=f(\omega).$ We say $f$ is represented by $\FPHP$ if it is represented by some tree $T$ which computes a function from $\FPHP$.
\end{definition}

\begin{lemma}\label{lemmaopenrepresent}
    Let $\varphi(y,\overline \alpha,\overline \Theta)$ be an open $L^2$-formula with parameters from $\FPHP$ and $\GPHP$ and the only free variable $y$. Then for every $i\in \Mc_n$ the function
    \begin{align*}
        f: \OPHP &\to \{0,1\}\\
           \omega & \mapsto \begin{cases}
               1 & \Mc\models \varphi(i,\overline \alpha(\omega),\overline \Theta(\omega))\\
               0 & \text{otherwise,}
           \end{cases}
    \end{align*} 
    is represented by some $\beta_i\in\FPHP$.
\end{lemma}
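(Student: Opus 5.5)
We argue via a single concatenated tree. The idea is that an open formula $\varphi(i,\overline\alpha,\overline\Theta)$ depends on $\omega$ only through finitely many values: the values $\alpha_j(\omega)$ of the parameters, together with the values of the set-sort parameters at finitely many arguments. Those arguments are terms built from $i$, the $\alpha_j$, function symbols of $L_n$ and applications of the $\Theta_k$. Every closed term of $L^2_n$ with parameters from $\FPHP$, $\GPHP$ and the constant $i$ is evaluated by an element of $\FPHP$. This holds because $\FPHP$ is $L_n$-closed (the lemma above) and $\GPHP$-closed (Lemma~\ref{lemmafphpgphp}); the constant $i\in\Mc_n$ is computed by the depth-$0$ tree labeled $i$. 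Consequently $\varphi(i,\overline\alpha,\overline\Theta)$ is a Boolean combination of finitely many atomic formulas $R(\gamma_1,\dots,\gamma_k)$ and $\gamma=\gamma'$, where each $\gamma$ is a member of $\FPHP$ computed by some labeled $\PHP$-tree of depth at most $n^{1/t}$, $t\in\Mc\setminus\N$.

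First, I will check by induction on term complexity that each term $s(i,\overline\alpha,\overline\Theta)$ is computed by a tree $T_s$ with two properties. First, whenever $T_s(\omega)$ is defined, its label equals the value of $s$ evaluated pointwise at $\omega$ in $\Mc$; that is, $f(\dots)$ is applied pointwise, and $\Theta(\beta)$ is given by the case definition in the $F$-compatibility definition. Second, $T_s$ has depth $\le n^{1/t}$. This is exactly what the proofs of the two closure lemmas give, since they produce explicit trees. Only standard-finitely many terms occur in $\varphi$, so only a standard number of depth doublings happen, and the depth stays below $n^{1/t'}$ for some nonstandard $t'$.

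Next, let $s_1,\dots,s_r$ ($r\in\N$) be all terms occurring in atomic subformulas of $\varphi$. I concatenate their trees as in the proof of $L_n$-closedness: every leaf of $T_{s_1}$ is replaced by a copy of $T_{s_2}$, and so on. The resulting tree $T$ has depth $\le r\cdot n^{1/t'}$, which is again of the form $n^{1/t''}$ with $t''$ nonstandard. At a leaf of $T$ reached by $\omega$, the values of all $s_j(\omega)$ are read off from the labels along the path. I label that leaf $1$ or $0$ according to whether $\Mc$ satisfies the quantifier-free matrix of $\varphi$ on these values. This labeling is definable in $\Mc$, since $\varphi$ is open and its symbols are in $L_{all}$, so $T$ is coded in $\Mc$. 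By construction, $T(\omega)=f(\omega)$ whenever $T(\omega)$ is defined, so $T$ represents $f$. The function $\beta_i$ computed by $T$ (defined arbitrarily, say $0$, where $T$ is undefined) lies in $\FPHP$ by the depth bound.

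The main obstacle is the bookkeeping in the inductive step for $\Theta(s)$. There one must verify that the tree substituting $T_{\beta_{\ell}}$ at leaves labeled $\ell<m$, and $0$ at leaves labeled $\ge m$, agrees with the pointwise value including the ``undefined'' cases. One must also check that undefinedness of a term only makes $T$ undefined, never wrong. I also need the trees to be coded uniformly for all $i$, which is unproblematic because $i$ enters only as a leaf label.
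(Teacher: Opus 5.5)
Your proof is correct and is essentially the paper's argument. The paper also uses the closure lemmas to reduce closed terms to single elements of $\FPHP$ and glues standard-finitely many trees, labeling leaves by truth values. It organizes this as an induction on the formula (atomic case, gluing for $\land$, flipping labels for $\lnot$), whereas you flatten it into one concatenation labeled by the whole open matrix; your treatment of $i$ as a depth-$0$ tree and of the undefined cases is, if anything, more explicit than the paper's.
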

\begin{proof}
     By induction on the complexity of $\varphi$. For atomic $\varphi$ we can consider only formulas of the form $R(y,\alpha_1,\dots,\alpha_k)$ for some $\alpha_1,\dots,\alpha_k\in \FPHP$, because by the $\FPHP$-compatibility of $\GPHP$ we can replace any closed term built by elements of $\FPHP$ and $\GPHP$ by a single element of $\FPHP$. Let $i\in \Mc_n$, to decide whether
     \[\Mc\models R(i,\alpha_1(\omega),\dots,\alpha_k(\omega)),\tag{\dag}\]
     it is enough to glue trees computing $\alpha_1,\dots,\alpha_k$ and then labeling each leaf depending on whether (\dag) holds. Analogously to previous proofs, such a tree is still an element of $\FPHP$ because the depth increases only $k$-times for some $k\in \N$.

     If $\varphi(y)$ is of the form $\psi_1(y)\land\psi_2(y)$ it is enough to glue the trees representing the function corresponding to $\psi_1$ and $\psi_2$ and label each leaf $1$ if it was assigned the value $1$ by both of the trees. 

     Finally, if $\varphi(y)$ is of the form $\lnot\psi(y)$, we can simply swap the labels $1$ and $0$ at the leaves of the tree representing the function corresponding to $\psi$.
\end{proof}

\begin{lemma}\label{lemmaopencomp}
    For any open $L^2$-formula $A(y)$  with the only free variable $y$ with parameters from $\FPHP$ and $\GPHP$ we get that
    \[\bbl (\forall x)(\exists X)( \forall y<x)(X(y)\equiv A(y))\bbr=\1.\]
    In other words, open comprehension is valid in $K(\FPHP,\GPHP)$.
\end{lemma}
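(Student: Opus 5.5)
To prove the validity of open comprehension, I will show that for every $\alpha\in\FPHP$ there is a single $\Theta\in\GPHP$ with
\[\bbl(\forall y<\alpha)(\Theta(y)\equiv A(y))\bbr=\1.\]
Then $\bbl(\exists X)(\forall y<\alpha)(X(y)\equiv A(y))\bbr=\1$ for every $\alpha$, and taking the infimum over $\alpha\in\FPHP$ gives the statement.

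\textbf{Construction of $\Theta$.} Fix $\alpha$, computed by a labeled $\PHP$-tree of depth at most $n^{1/t}$. All values of $\alpha$ are labels of leaves, and there are at most $(n+2)^{n^{1/t}}$ leaves. Choose $m\in\Mc_n$ exceeding every label; this is possible since each label lies in $\Mc_n$ and the set of labels is coded in $\Mc$. For each $i<m$, Lemma~\ref{lemmaopenrepresent} gives $\beta_i\in\FPHP$ representing $\omega\mapsto[\Mc\models A(i,\overline\alpha(\omega),\overline\Theta(\omega))]$.

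\textbf{Main obstacle.} The map $i\mapsto\beta_i$ must be uniform enough that $\hat\beta=(\beta_0,\dots,\beta_{m-1})$ is coded in $\Mc$. The depths must also be bounded by a single $n^{1/t'}$ with $t'$ nonstandard. I will handle this by observing that the construction in the proof of Lemma~\ref{lemmaopenrepresent} is explicit. The tree for $\beta_i$ is the gluing of the fixed trees for the finitely many $\FPHP$-elements occurring in the atomic subformulas of $A$. Only the leaf labels depend on $i$, and they are determined by the definable condition $(\dag)$. Terms containing $y$, such as $\Theta_j(y+\gamma)$, are the delicate case. Here I glue the tree of $\gamma$ and then, definably in $i$, the tree of the relevant component of $\Theta_j$. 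The depth is then bounded uniformly by $k\cdot n^{1/s}$ for a standard $k$ depending only on $A$, with $s$ chosen nonstandard by overspill as in Lemma~\ref{lemmafphpgphp}. So the sequence is definable in $\Mc$, hence coded, and all its members lie in $\FPHP$. Let $\Theta\in\GPHP$ be given by $\hat\beta$.

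\textbf{Verification.} Take the $\Omega$-set $S$ of those $\omega$ where the trees for $\alpha$ and for all $\beta_i$ relevant at $\omega$ reach leaves. For $\omega\in S$ and $i<\alpha(\omega)<m$, we have $\Theta_\omega(i)=\beta_i(\omega)$, which equals the truth value of $A(i)$ at $\omega$. Hence $\omega$ lies in the set defining $\bbl\Theta(\gamma)\equiv A(\gamma)\bbr$ whenever $\gamma(\omega)<\alpha(\omega)$.

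To get value $\1$ for the universal statement, I show that for each $\gamma\in\FPHP$,
\[\bbl\gamma<\alpha\bbr\le\bbl\Theta(\gamma)\equiv A(\gamma)\bbr.\]
Both sides are computed pointwise on $\omega$. The complement of the relevant set consists only of those $\omega$ where the glued tree for $\gamma$, $\alpha$ and $\beta_{\gamma(\omega)}$ is undefined. This glued tree has depth bounded by some $n^{1/t''}$. By Lemma~\ref{lemmaphpdefined} the failure probability is at most $(n^{1/t''}+1)/(n+1)$, which is infinitesimal. So the difference lies in $\Ic$, and taking the infimum over $\gamma$ gives $\1$.
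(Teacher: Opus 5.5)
Your proof is correct and follows the paper's argument. You bound $\alpha$ by some $m\in\Mc_n$, take $\Theta\in\GPHP$ given by the tuple $(\beta_0,\dots,\beta_{m-1})$ from Lemma~\ref{lemmaopenrepresent}, and verify each instance $\gamma<\alpha\to(\Theta(\gamma)\equiv A(\gamma))$ pointwise outside an infinitesimal set via Lemma~\ref{lemmaphpdefined}; the paper phrases the same step via $\bbl\alpha<m\bbr=\1$ and quantifies over $y<m$ instead of $y<\alpha$. Your discussion of why $i\mapsto\beta_i$ is coded in $\Mc$ with a uniform depth bound fills in a point the paper leaves implicit. One small inaccuracy: $\alpha$ need not equal its tree's labels where the tree is undefined, so not all values of $\alpha$ are leaf labels, but your verification on $S$ already accounts for this.
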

\begin{proof}
    Note that for any $\alpha \in \FPHP$ we can find $m\in \Mc_n$ such that \[\bbl\alpha<m\bbr=\1,\] by taking any $m$ which is larger than a maximal label of a leaf in the tree computing $\alpha$.

    We will proceed by showing that for every $\alpha \in \FPHP$ and $m\in \Mc_n$ satisfying $\bbl\alpha <m\bbr=\1$ we have
    \[\bbl(\exists X)( \forall y<m)(X(y)\equiv A(y))\bbr=\1,\]
    by predicate logic this implies $\bbl(\exists X)( \forall y<\alpha)(X(y)\equiv A(y))\bbr=\1$ which when proved for every $\alpha\in \FPHP$ implies
    \[\bbl (\forall x)(\exists X)( \forall y<x)(X(y)\equiv A(y))\bbr=\1.\]

    Let $\alpha\in \FPHP$ and $m\in\Mc_n$ be as stated. Take $\Theta\in \GPHP$ computed by an $m$-tuple $(\beta_0,\dots,\beta_{m-1})$ of elements of $\FPHP$ where $\beta_i$ is taken from the statement of Lemma~\ref{lemmaopenrepresent}. We get that
    \begin{align*}
        \bbl(\exists X)( \forall y<m)(X(y)\equiv A(y))\bbr &\geq \bbl (\forall y<m)(\Theta(y)\equiv A(y))\bbr\\
                                                           &= \Land_{\gamma\in\FPHP}\bbl \gamma<m \to(\Theta(\gamma)\equiv A(\gamma))\bbr\\
                                                           &= \Land_{\gamma\in\FPHP} \1 = \1,\tag{\dag}
    \end{align*}
    where $(\dag)$ follows from Lemma~\ref{lemmaopenrepresent} which implies that for every $\gamma\in \FPHP$
    \[
        \bbl \gamma<m \to(\Theta(\gamma)\equiv A(\gamma))\bbr= \1,
    \]
    this is because a truth value of a quantifier free sentence $B$ can be evaluated as the $\Ic$-equivalence class of the event
    \[E_B=\{\omega\in \OPHP; \Mc\models B\}\]
    and here $B$ taken as $\gamma<m \to(\Theta(\gamma)\equiv A(\gamma))$ is valid in $\Mc$ on every sample where all of parameters are defined which by Lemma~\ref{lemmaphpdefined} happens with probability $1-(n^{1/t}+1)/(n+1)$ which is infinitesimally close to $1$. Thus $\mu(E_B/\Ic)=1$ and $E_B/\Ic=\1$.
\end{proof}

\begin{lemma}\label{lemmaopenind}
    For any open $L^2$-formula $A(y)$  with the only free variable $y$ with parameters from $\FPHP$ and $\GPHP$ we get that
    \[\bbl (\forall x)(\lnot A(0)\lor A(x) \lor (\exists y)(A(y)\land A(y+1)))\bbr=\1.\]
    In other words, open induction is valid in $K(\FPHP,\GPHP)$.
\end{lemma}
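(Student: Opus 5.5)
We read the displayed induction instance in its intended form, with the last disjunct $(\exists y)(A(y)\land\lnot A(y+1))$. The plan is to reduce validity to a single random witness for each number-sort parameter. Since $\bbl(\forall x)\psi(x)\bbr=\Land_{\alpha\in\FPHP}\bbl\psi(\alpha)\bbr$, it suffices to fix $\alpha\in\FPHP$ and show
\[\bbl\lnot A(0)\lor A(\alpha)\lor(\exists y)(A(y)\land\lnot A(y+1))\bbr=\1 .\]
For this we construct one $\gamma\in\FPHP$ such that
\[\bbl\lnot A(0)\lor A(\alpha)\lor(A(\gamma)\land\lnot A(\gamma+1))\bbr=\1 .\]
This bound dominates the previous one, since the existential is a supremum over $\FPHP$. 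The last sentence is quantifier free. As in the proof of Lemma~\ref{lemmaopencomp}, its value is the $\Ic$-class of the event on which it holds in $\Mc$. So it suffices that this event contains every $\omega$ on which a certain $\PHP$-tree of depth at most $n^{1/t}$, with $t\in\Mc\setminus\N$, is defined. Lemma~\ref{lemmaphpdefined} then gives the event probability infinitesimally close to $1$, hence value $\1$.

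The construction of $\gamma$ is a binary search carried out by a $\PHP$-tree. By Lemma~\ref{lemmaopenrepresent}, for each $i\in\Mc_n$ there is a tree $T_i$ with $0/1$ leaves representing $\omega\mapsto[\Mc\models A(i,\overline\alpha(\omega),\overline\Theta(\omega))]$. The construction in that lemma is uniform in $i$, since it glues the fixed parameter trees and relabels leaves according to a definable condition. So the map $i\mapsto T_i$ is definable in $\Mc$. By overspill, as in Lemma~\ref{lemmafphpgphp}, there is a single $t\in\Mc\setminus\N$ with $\mathrm{depth}(T_i)\le n^{1/t}$ for all $i$ below a common bound $m\in\Mc_n$ with $\bbl\alpha<m\bbr=\1$.

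The tree $T_\gamma$ is built as follows. First run the tree for $\alpha$ to learn $a=\alpha(\omega)$. Then run $T_0$ and $T_a$. At leaves where $A(0)$ fails or $A(a)$ holds, label $0$. Otherwise we have $A(\mathrm{lo})$ with $\mathrm{lo}=0$ and $\lnot A(\mathrm{hi})$ with $\mathrm{hi}=a$, and we iterate: query $T_{\mathrm{mid}}$ for $\mathrm{mid}=\lfloor(\mathrm{lo}+\mathrm{hi})/2\rfloor$ and replace $\mathrm{lo}$ or $\mathrm{hi}$ accordingly. Stop when $\mathrm{hi}=\mathrm{lo}+1$ and label the leaf $\mathrm{lo}$. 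This is a definable recursive construction, so $T_\gamma$ is coded in $\Mc$. On any $\omega$ where the path through $T_\gamma$ is defined, every queried $T_i$ was defined and correct. Hence in $\Mc$ either $\lnot A(0)$, or $A(a)$, or $A(\gamma(\omega))\land\lnot A(\gamma(\omega)+1)$ holds, evaluated with the parameters at $\omega$.

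The main obstacle is the depth bound, which is exactly where the truncation to $\Mc_n$ enters. The search makes about $\log a\le\log m$ rounds. Since $m\in\Mc_n$, we have $m\le 2^{n^{1/s}}$ for some nonstandard $s$, so there are at most $n^{1/s}+1$ rounds. Each round has depth at most $n^{1/t}$, and the preliminary part has depth at most $3n^{1/t}$. The total depth is therefore at most $(n^{1/s}+4)n^{1/t}\le n^{1/t'}$ for some nonstandard $t'$, so $\gamma\in\FPHP$. One also has to check that the Boolean value of $A(\gamma)$ is computed pointwise from $A(\gamma(\omega))$, which holds because closed terms reduce to elements of $\FPHP$ (Lemma~\ref{lemmafphpgphp}). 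Finally, the event on which all trees used are defined has complement of probability at most $(n^{1/t'}+1)/(n+1)$, which is infinitesimal. This finishes the argument.
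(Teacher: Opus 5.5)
Your proposal is correct and follows the paper's proof essentially step by step. For each $\alpha$ you build a single witness as a binary-search $\PHP$-tree assembled from the trees of Lemma~\ref{lemmaopenrepresent}, and you bound its depth using $\log m\le n^{1/s}$ for $m\in\Mc_n$, exactly as the paper does. You are also right to read the last disjunct as $A(y)\land\lnot A(y+1)$; the paper's display omits the negation. The one detail to tighten is the choice of $m$: take it above every leaf label of the tree for $\alpha$, as the paper does, rather than merely requiring $\bbl\alpha<m\bbr=\1$, so that every queried $T_a$ falls under the uniform depth bound.
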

\begin{proof}
    We will show that for every $\alpha\in \FPHP$ we can find $\beta\in \FPHP$ such that
    \[\bbl (\lnot A(0)\lor A(\alpha) \lor (A(\beta)\land A(\beta+1)))\bbr=\1,\]
    it is not hard to see that this implies the validity of open induction.

    Let $\alpha\in \FPHP$ and let $\alpha_1,\dots,\alpha_k\in \FPHP$ be the parameters of $A$, we again omit parameters from $\GPHP$ as the closed terms they form with elements of $\FPHP$ can be evaluated as elements of $\FPHP$.

    We will construct the tree computing $\beta$ to behave as follows: It first decides whether $\lnot A(0)$ or $A(\alpha)$ are true, if they are, then the tree terminates with an arbitrary output. Otherwise, it performs binary search by keeping track of pairs $(i_t,j_t)$, with $(i_0,j_0)=(0,a)$ where $a$ is the value of $\alpha$ on the given sample and each $(i_{t+1},j_{t+1})$ is either $(i_{t},\lfloor (i_t+j_t)/2\rfloor )$ or $(\lfloor (i_t+j_t)/2\rfloor, j_t)$ depending on which of the pairs satisfy that $A(i_{t+1})$ is true and $A(j_{t+1})$ is not. In $\log(a)$ many steps it finds a pair $(i,i+1)$ such that $A(i)$ is true and $A(i+1)$, making the induction axiom for $A$ true on the given sample.

    To construct the tree $T'$ computing $\beta$ we will use the trees $\beta_i$ from the statement of Lemma~\ref{lemmaopenrepresent}. To decide the validity of $\lnot A(0)$ it is enough to use $\beta_0$. Then, to decide the validity of $A(\alpha)$, we prolong $T'$ along each path with the tree computing $\alpha$ and append to every leaf labeled by $i$ the tree $\beta_i$. Now, for every path where $\lnot A(0)$ is true or $A(\alpha)$ is true, we are done. The rest of the paths are then extended to compute the binary search procedure described above. To do so, we extend each path with the tree $\beta_{(i_0+j_0)/2}$ and depending on its answer we define $(i_1,j_1)$ for the given path and continue with the tree $\beta_{(i_1+j_1)/2}$ and so on until we get to the case where $j_t=i_t+1$, in this case we label the leaf by $i_t$.

    By construction, the tree computes the correct function, it remains to consider the depth of the tree. We know that there is $s\in \Mc\setminus \N$ such that the trees computing $\alpha,\alpha_1,\dotsm,\alpha_k,\beta_0,\dots,\beta_{m-1}$ are all of depth bounded above by $n^{1/s}$, here $m\in \Mc_n$ is picked to be strictly larger than $\alpha(\omega)$ for any $\omega\in \OPHP$. The value $t\in \Mc$ where the binary search terminates is bounded by $\log(a)$ which is bounded by $\log(m)$. The full depth of the tree is then bounded by 
    \[(3+t)\cdot n^{1/s} \leq (3+\log(m))\cdot n^{1/s}\leq n^{1/s'}\cdot n^{1/s}\leq n^{1/s''},\]
    for some $s',s''\in \Mc \setminus \N$, where the inequality $\log(m)\leq n^{1/s'}$ for some $s'$ follows from $m$ being a member of the large canonical model $\Mc_n$,
\end{proof}

\begin{definition}\label{defidelta}
    We define an element $\Delta\in\GPHP$ to be computed by the $(n+1)$-tuple $(\delta_0,\dots,\delta_n)$, where $\delta_i$, $i<n+1$, is computed by the depth $1$ labeled $\PHP$-tree with the root labeled as $i\to\: ?$ and each leaf labeled the same as the edge incident to it. In other words, $\delta_i(\omega)$ is the value $j$ such that $\omega(i)=j$ whenever such a value exists, otherwise it is undefined. 
\end{definition}

\begin{theorem}\label{theoremofphp}
    In $K(\FPHP,\GPHP)$ it holds that 
    \[\bbl\ontoFPHP(n,\Delta)\bbr = \0.\]
    In other words, it is valid in $K(\FPHP,\GPHP)$ that $\Delta$ is a bijective function from $\{0,\dots,n+1\}$ to $\{0,\dots,n\}$.
\end{theorem}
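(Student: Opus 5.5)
The plan is to compute $\bbl\ontoFPHP(n,\Delta)\bbr$ directly from Definition~\ref{def:Boolean}. The formula is a disjunction of three sentences, so its value is the join of their three values, and it suffices to show that each of them is $\0$. The first two are existential, so their values are joins $\Lor_{\alpha}$ (resp.\ $\Lor_{\alpha,\beta}$) over $\FPHP$ of values of open sentences. I will show that each such open sentence has value $\0$. As in the proof of Lemma~\ref{lemmaopencomp}, the value of a quantifier-free sentence $B$ with parameters in $\FPHP,\GPHP$ is the class modulo $\Ic$ of the event $E_B$ of samples on which all parameters are defined and $\Mc\models B$. So the task reduces to showing that each relevant event $E_B$ has infinitesimal probability; in the first two cases it is in fact empty.

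\textbf{First disjunct.} Fix $\alpha\in\FPHP$. By Lemma~\ref{lemmafphpgphp}, $\Delta(\alpha)\in\FPHP$. On any $\omega$ where $\alpha(\omega)\le n+1$ and $\Delta(\alpha)(\omega)$ is defined, Definition~\ref{defidelta} gives $\Delta(\alpha)(\omega)=\omega(\alpha(\omega))$, which is a hole. (If the index falls outside the tuple, the value is instead the default $0$.) In either case the value is in the range of holes. So the event that $\alpha\le n+1$ and $\Delta(\alpha)$ lands out of range is empty, up to the indexing conventions of the definition, which I would check carefully.

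\textbf{Second disjunct.} Fix $\alpha,\beta$. On a sample where $\alpha(\omega)\neq\beta(\omega)$ and both $\Delta(\alpha)(\omega)$ and $\Delta(\beta)(\omega)$ are defined as genuine values $\omega(\alpha(\omega))$ and $\omega(\beta(\omega))$, these values are distinct, because $\omega$ is the graph of a partial injection. Hence the event $E_B$ is empty, up to handling the default-$0$ case.

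\textbf{Third disjunct.} This has the form $(\exists h\le n)(\forall p\le n+1)\,\Delta(p)\neq h$, and it is the main step. Its value is $\Lor_{\alpha}\Land_{\beta}\bbl \alpha\le n\to \Delta(\beta)\neq\alpha\bbr$, read appropriately. For each $\alpha$ I will exhibit a single $\beta\in\FPHP$ that witnesses a preimage, so that $\bbl\alpha\le n\land\Delta(\beta)\neq\alpha\bbr$ is infinitesimal. To build $\beta$, take the labeled $\PHP$-tree $T$ computing $\alpha$, of depth at most $n^{1/t}$. At each leaf with label $h\le n$, append one node $?\to h$, and label each of its outgoing edges $i$ by $i$. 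The resulting tree has depth at most $n^{1/t}+1$, so $\beta\in\FPHP$. Whenever the extended tree reaches a leaf on $\omega$, we have $\omega(\beta(\omega))=\alpha(\omega)$, so $\Delta(\beta)(\omega)=\alpha(\omega)$. By Lemma~\ref{lemmaphpdefined}, the tree is undefined with probability at most $(n^{1/t}+2)/(n+1)$, which is infinitesimal. Note that this failure event includes the case where $\alpha(\omega)$ is exactly the unique hole missed by $\omega$; this is what makes the argument work despite $\abs{\omega}=n$.

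Hence each meet over $\beta$ is $\0$, so each join over $\alpha$ is $\0$, and altogether $\bbl\ontoFPHP(n,\Delta)\bbr=\0$. The main obstacle is the third disjunct, namely building the uniform witness $\beta$ and invoking Lemma~\ref{lemmaphpdefined} to control the probability of the missing hole. The first two disjuncts are routine bookkeeping with the undefined and default-$0$ clauses.
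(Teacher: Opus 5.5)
Your proposal is correct and follows essentially the paper's proof. Showing that each disjunct has value $\0$ is the same as the paper's showing that each negated disjunct has value $\1$. The first two disjuncts are handled samplewise, using injectivity of $\omega$ together with infinitesimal undefinedness. For surjectivity you use exactly the paper's witness: extend the tree for the hole variable by a $?\to h$ query and invoke Lemma~\ref{lemmaphpdefined}.

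One small correction concerns your remark about ``the unique hole missed by $\omega$''. It reads the hole range literally as $\{0,\dots,n\}$, but the paper's proof (and your own first-disjunct argument, which treats values $\geq n$ as out of range) uses the convention in which every $\omega\in\OPHP$ is onto the holes. Under the literal reading the first disjunct would not even be $\0$. Under the consistent reading there is no missed hole, so the remark is harmless but unnecessary.
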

\begin{proof}
    We will prove that the negation of each of the four disjuncts is assigned the value $\1$. We will start with the second disjunct which when negated becomes
    \[(\forall p_1, p_2\leq x+1)(p_1\neq p_2 \to (X(p_1) \neq  X(p_2))\]
    we will show for every $\alpha_1,\alpha_2$ that 
    \[\bbl(\alpha_1<n+1\land \alpha_2 <n+1 \land \alpha_1\neq \alpha_2) \to (\Delta(\alpha_1) \neq \Delta(\alpha_2)) \bbr=\1,\]
    since this is a quantifier free sentence, it is enough to check that the event consisting of all $\omega$ where all of the random variables in the formula are defined and the formula is true in $\Mc$ has probability infinitesimally close to $1$. Since every $\omega \in \OPHP$ is a partial injective function, it maps each pigeon to a different hole, and thus the formula is true on every sample where all of the random variables are defined. By the union bound on the probabilities of $\alpha_1$,$\alpha_2$,$\Delta(\alpha_1)$ and $\Delta(\alpha_2)$ being defined and Lemma~\ref{lemmaphpdefined} we get that this event has probability infinitesimally close to $1$.

    Regarding the first disjunct, whose negation is
    \[(\forall p \leq n+1)(\Delta(p)<n),\]
    we will show that for every $\alpha\in\FPHP$
    \[\bbl\alpha<n+1 \to\Delta(\alpha)<n\bbr=\1,\]
    the sentence we are evaluating is again quantifier free and on every sample $\omega\in\OPHP$ where both $\alpha$ and $\Delta(\alpha)$ are defined, it is true, because $\omega$ is a partial injective function from $\{0,\dots,n+1\}$ to $\{0,\dots,n\}$. Again by union bound and Lemma~\ref{lemmaopenrepresent} we get that the probability of both $\alpha$ and $\Delta(\alpha)$ being defined is infinitesimally close to $1$.

    Regarding the third disjunct, whose negation is 
    \[(\forall h \leq n)(\exists  p\leq n+1)(\Delta(p) = h)\]
    we will show that for every $\beta \in \FPHP$ there is $\alpha\in \FPHP$ such that 
    \[\bbl \beta\leq n\land \alpha\leq n+1\land  \Delta(\alpha) = \beta\bbr=\1,\]
    here $\alpha$ is computed a labeled $\PHP$-tree which starts off as the tree computing $\beta$ whose every leaf with label $j\leq n$ we extend by a query $?\to j$ and label each new leaf with the label of the edge incident to it. By the fact that each $\omega\in \OPHP$ is surjective the quantifier free sentence is true on every sample where $\beta$, $\alpha$ and $\Delta(\alpha)$ are defined, which is by an analogous argument an event of probability infinitesimally close to $1$.
\end{proof}

\subsection{Framework U: Countable restricted ultrapowers}\label{restricted}

Suppose $M$ is a model of arithmetic, $I \sub M$ a set of indexes, $F$ a collection of some functions $f: I \to M$, and $\mathcal{U}$ an ultrafilter on $I$. An ultrapower of $M$ by $\mathcal{U}$ which uses only functions $f \in F$ is called  \emph{an $F$-restricted ultrapower of $M$} and denoted $F/\mathcal{U}$. In the setting of bounded arithmetic, $F$ will always be a subset of functions \emph{definable} over the structure $M$. Moreover, in this section, $M$ and $F$  will always be countable and hence the resulting ultrapowers will  be countable as well. 

While seemingly unrelated to forcing constructions, we will  show in Section \ref{sec:ultrapowerforcing} that (countable) restricted ultrapowers can interpreted as a forcing construction in the sense of framework AM. See Section \ref{sec:42} where we use set theory to interpret  generalizations to uncountable ultrapowers.

In the first Subsection \ref{Skolem} we discuss the situation where $F$ is the set of all definable functions.  A similar construction was used already by Skolem~\cite{skolem1933uber} to give the first explicit countable non-standard model of arithmetic.\footnote{Skolem's model also uses equivalence classes of definable sequences of elements, but the equivalence relation was defined using the concept of cohesive sets. This was generalized to the concept of Cohesive Powers by Dimitrov~\cite{dimitrov2009cohesive}.} While this is not immediately relevant to forcing in bounded arithmetic, it is considered the historically first construction of a non-standard model of arithmetic and nicely ties to the more general setting of restricted ultrapowers.

In the second Subsection \ref{F}, we give an example of a more general $F$-restricted ultrapower where $F$ is the family of polynomial time computable functions. This construction was used by Bydžovský and Müller~\cite{bydvzovsky2020polynomial} to obtain a model of a theory $T_{PV}$.\footnote{They show that a certain computational complexity holds which is not known to hold in the standard model. Their result extends a previous result of Krajíček and Oliveira~\cite{krajicek2017unprovability} which was proven without direct model theoretic constructions.} Another application of restricted ultrapowers to bounded arithmetic was given by Garlík~\cite{garlik2016construction}.

\subsubsection{Ultrapowers with definable functions} \label{Skolem}

\begin{definition}
    Let $L$ be a language, $M$ a countable $L$-structure and let $I\subseteq M$ be countable. Define $L(I)$ to be an extension of $L$ with a unary predicate interpreted as $I$ in $M$. Let $M^{(I)}$ be the set of all functions whose domain is $I$ and whose graph is definable by an $L(I)$-formula with parameters from $M$.
\end{definition}

\begin{definition}
    Let $L$ be a language, $M$ a countable $L$-structure and let $I\subseteq M$ be countable.

    Let $\tilde{\mathcal{P}}(I)$ be the Boolean algebra of all $L(I)$-definable subsets of $I$ with parameters from $M$. Let $\mathcal{U}$ be an ultrafilter on $\tilde{\mathcal{P}}(I)$. The ultrapower $M^{(I)}/\mathcal{U}$ is an $L$-structure defined in the following way:
    \begin{itemize}
        \item the universe is $M^{(I)}/\sim$, where \[\alpha \sim \beta\text{ iff }\{i\in I;M \models \alpha(i)=\beta(i)\} \in \mathcal{U},\]
        \item for every $k$-ary function symbol $f\in L$, $f(x_1,\dots,x_k)$ is interpreted as $([\alpha_1]_{\sim},\dots,[\alpha_k]_{\sim}) \mapsto [f(\alpha_1,\dots,\alpha_k)]_\sim$,
        \item for every $k$-ary relation symbol $R\in L$, $M^{(I)}/\mathcal{U}\models R(\alpha_1,\dots,\alpha_k)$ iff $\{i\in I; M\models R(\alpha_1(i),\dots,\alpha_k(i))\}\in \mathcal{U}$.
    \end{itemize}

    We say $M^{(I)}$ is \emph{full} if for every $L$-formula $\varphi$, every $\beta_1,\dots,\beta_k\in M^{(I)}$ we have $\alpha \in M^{(I)}$ such that for every $i\in I$ 
    \begin{equation}\label{O:full} M\models (\exists x)\varphi(x,\beta_1(i),\dots,\beta_k(i))\to\varphi(\alpha(i),\beta_1(i),\dots\beta_k(i)).
   \end{equation}

    For any $L$-formula $\varphi$ and $\alpha_1,\dots,\alpha_k\in M^{(I)}$ we denote 
    \[\bbl\varphi(\alpha_1,\dots,\alpha_k)\bbr = \{i \in I; M\models \varphi(\alpha_1,\dots,\alpha_k)\}.\]
\end{definition}

The following is the fundamental theorem of ultrapowers and implies the corollary that whenever $M^{(I)}$ is full, ultrapowers composed of definable functions are elementarily equivalent to full ultrapowers. It also implies that whenever $M^{(I)}$ is full, $\bbl-\bbr$ is a truth-valuation of a Boolean-valued model. See Section~\ref{sec:42} for more details on full Boolean models in an uncountable setting.

\begin{theorem}[Łoś]\label{O:Los}
    Let $L$ be a language, $M$ a countable $L$-structure and $I\subseteq M$ such that $M^{(I)}$ is full. Let $\mathcal{U}$ be an ultrafilter on the Boolean algebra $\tilde{\mathcal{P}}(I)$ defined as above. Then for every $L$-formula $\varphi(x_1,\dots,x_k)$ and all $\alpha_1,\dots,\alpha_k \in M^{(I)}$:
    \begin{align*}
        M^{(I)}/\mathcal{U}\models \varphi([\alpha_1]_{\sim},\dots,[\alpha_k]_{\sim}) \iff \bbl \varphi(\alpha_1(i),\dots,\alpha_k(i))\bbr \in \mathcal{U}
    \end{align*}
\end{theorem}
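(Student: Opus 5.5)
I will prove the equivalence by induction on the complexity of $\varphi$, treating $\neg$, $\land$ and $\exists$ as the primitive connectives; the remaining ones are handled by the usual dualities. Throughout, $\bbl \varphi(\alpha_1(i),\dots,\alpha_k(i))\bbr$ is read as the set $\{i\in I; M\models\varphi(\alpha_1(i),\dots,\alpha_k(i))\}$. First I check that this set lies in $\tilde{\mathcal{P}}(I)$. The graphs of the $\alpha_j$ are $L(I)$-definable with parameters from $M$, so the set is defined by an $L(I)$-formula: combine the graphs with $\varphi$, restricted to the predicate $I$. Hence asking whether it belongs to $\mathcal{U}$ makes sense. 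Before the induction I also note that $\sim$ is an equivalence relation and a congruence, so the interpretation in $M^{(I)}/\mathcal{U}$ is well defined. This uses that $\mathcal{U}$ is a filter, and that $f(\alpha_1,\dots,\alpha_k)\in M^{(I)}$ because its graph is definable from the graphs of the $\alpha_j$.

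\textbf{Atomic case.} I first show, by induction on terms, that the value of a term $t(\alpha_1,\dots,\alpha_k)$ in the ultrapower is the class of the function $i\mapsto t^M(\alpha_1(i),\dots,\alpha_k(i))$. This function lies in $M^{(I)}$ by definability. The atomic case $R(t_1,\dots,t_r)$, including equality, then follows directly from the definition of the interpretation of relations.

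\textbf{Propositional steps.} For $\neg\psi$ I use that $\mathcal{U}$ is an ultrafilter on $\tilde{\mathcal{P}}(I)$: exactly one of a definable set and its complement in $I$ lies in $\mathcal{U}$. For $\psi\land\chi$ I use that $\bbl\psi\land\chi\bbr=\bbl\psi\bbr\cap\bbl\chi\bbr$ and that $\mathcal{U}$ is closed under intersections and supersets.

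\textbf{Existential step.} This is the only place fullness is needed, and I expect it to be the main point. Suppose first that $M^{(I)}/\mathcal{U}\models\exists x\,\psi(x,[\bar\alpha])$. Then some $[\beta]$ witnesses it. By the induction hypothesis $\bbl\psi(\beta,\bar\alpha)\bbr\in\mathcal{U}$, and this set is contained in $\bbl\exists x\,\psi(x,\bar\alpha)\bbr$, so the latter is in $\mathcal{U}$ by upward closure. For the converse, assume $\bbl\exists x\,\psi\bbr\in\mathcal{U}$. By fullness (\ref{O:full}) there is $\beta\in M^{(I)}$ with $\bbl\exists x\,\psi(x,\bar\alpha)\bbr\subseteq\bbl\psi(\beta,\bar\alpha)\bbr$. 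So $\bbl\psi(\beta,\bar\alpha)\bbr\in\mathcal{U}$, and the induction hypothesis gives $M^{(I)}/\mathcal{U}\models\psi([\beta],[\bar\alpha])$, hence $\exists x\,\psi$.

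One subtlety must be settled in this step. Fullness is stated for $L$-formulas, whereas the witness is needed with parameters from $M$ and for formulas $\psi$ that may involve the predicate $I$. Since the statement concerns $L$-formulas only, $\psi$ is an $L$-formula and the stated fullness suffices. Parameters from $M$ are absorbed by taking constant functions: these lie in $M^{(I)}$ and can be used among the $\beta_j$.
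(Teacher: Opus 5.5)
Your proof is correct and follows the same route as the paper, which only remarks that the argument is a routine induction on formulas with fullness handling the existential step. You fill in the details the paper omits: definability of the truth sets (so that membership in $\mathcal{U}$ makes sense), the term lemma, and the use of (\ref{O:full}) to obtain $\bbl\exists x\,\psi(x,\bar\alpha)\bbr\subseteq\bbl\psi(\beta,\bar\alpha)\bbr$. All of these steps are sound.
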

\begin{proof}
The proof is routine, using the fullness to deal with the existential step.
\end{proof}

\begin{corollary} \label{O:Losc}
    Let $L$ be a language, $M$ a countable $L$-structure and $I\subseteq M$, such that $M^{(I)}$ is full. Let $\mathcal{U}$ be an ultrafilter on the powerset $\mathcal{P}(I)$ and $\mathcal{U}'$ its restriction to $\tilde P(I)$, let $\sim$ be the equivalence relation on $M^I$ defined by $\mathcal{U}$ and $\sim'$ the equivalence relation $M^{(I)}$ defined by $\mathcal{U}'$. Then there is an elementary embedding $e:M^{(I)}/\mathcal{U}'\to M^I/\mathcal{U}.$
\end{corollary}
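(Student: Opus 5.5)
The plan is to define $e$ as the natural inclusion of definable functions into all functions: set $e([\alpha]_{\sim'}) = [\alpha]_{\sim}$ for $\alpha \in M^{(I)} \subseteq M^I$. The argument then has three steps.

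\textbf{Step 1: well-definedness and injectivity.} For $\alpha,\beta \in M^{(I)}$ the set $\{i\in I; M\models \alpha(i)=\beta(i)\}$ is $L(I)$-definable with parameters from $M$, because the graphs of $\alpha$ and $\beta$ are definable. So it belongs to $\tilde{\mathcal{P}}(I)$. Since $\mathcal{U}' = \mathcal{U}\cap \tilde{\mathcal{P}}(I)$, this set lies in $\mathcal{U}'$ exactly when it lies in $\mathcal{U}$. Hence $\alpha\sim'\beta$ if and only if $\alpha\sim\beta$, which gives both well-definedness and injectivity of $e$. The same observation shows that $\mathcal{U}'$ really is an ultrafilter on $\tilde{\mathcal{P}}(I)$: for definable $A$, either $A$ or $I\setminus A$ is in $\mathcal{U}$, and both are definable.

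\textbf{Step 2: elementarity.} Fix an $L$-formula $\varphi$ and $\alpha_1,\dots,\alpha_k\in M^{(I)}$. By Theorem~\ref{O:Los}, which applies because $M^{(I)}$ is full,
\[
M^{(I)}/\mathcal{U}'\models\varphi([\alpha_1]_{\sim'},\dots,[\alpha_k]_{\sim'}) \iff \bbl\varphi(\alpha_1,\dots,\alpha_k)\bbr\in\mathcal{U}'.
\]
The classical Łoś theorem for the full ultrapower $M^I/\mathcal{U}$ needs no fullness hypothesis, since the axiom of choice supplies witnesses. It gives
\[
M^I/\mathcal{U}\models\varphi([\alpha_1]_{\sim},\dots,[\alpha_k]_{\sim}) \iff \bbl\varphi(\alpha_1,\dots,\alpha_k)\bbr\in\mathcal{U}.
\]
The set $\bbl\varphi(\alpha_1,\dots,\alpha_k)\bbr$ is definable in $L(I)$ with parameters: substitute the definitions of the graphs of the $\alpha_j$ into $\varphi$ and relativize to $I$. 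So it lies in $\tilde{\mathcal{P}}(I)$, and membership in $\mathcal{U}'$ and in $\mathcal{U}$ coincide. Chaining the two equivalences shows that $e$ preserves and reflects every formula, so $e$ is elementary. Taking $\varphi$ atomic recovers that $e$ is a homomorphism; note that $e$ commutes with function symbols because $f(\alpha_1,\dots,\alpha_k)$ is again a definable function.

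\textbf{Main obstacle.} The only delicate point is the step in Theorem~\ref{O:Los} for the restricted ultrapower. There the existential case needs a witness inside $M^{(I)}$, and fullness condition~\eqref{O:full} provides it; this is taken as given. What remains is just the check that all the relevant index sets are definable, so that $\mathcal{U}$ and $\mathcal{U}'$ agree on them.
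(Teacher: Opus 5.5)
Your proposal is correct and is the paper's proof. The paper also sets $e([\alpha]_{\sim'})=[\alpha]_\sim$ and chains the \L{}o\'s theorem for the restricted ultrapower (which needs fullness) with \L{}o\'s for the full ultrapower, using $\bbl\varphi(\alpha_1,\dots,\alpha_k)\bbr\in\tilde{\mathcal{P}}(I)$ so that membership in $\mathcal{U}'$ and in $\mathcal{U}$ coincide; your explicit check of well-definedness and injectivity via the definable equality set just fills in what the paper calls ``clear''.
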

\begin{proof}
    %\textbf{Claim:} For every $\alpha\in M^{(I)}$, $[\alpha]_{\sim'} = \{\beta\in[\alpha]_{\sim};\beta \in M^{(I)}\}$.

    %\emph{Proof of claim.} For any $\alpha,\beta\in M^{(I)}$ we have that 
    %\[\{i\in I; \alpha(i)=\beta(i)\}\in \mathcal{U}' \Leftrightarrow \{i\in I; \alpha(i)=\beta(i)\}\in \mathcal{U},\]
    %simply by the fact that $\{i\in I; \alpha(i)=\beta(i)\}\in \tilde{\mathcal{P}}(I)$. Then
    %\[\alpha \sim' \beta \Leftrightarrow \alpha\sim \beta,\]
    %which implies the claim.

    Clearly, $e:[\alpha_i]_{\sim'}\mapsto [\alpha_i]_{\sim}$ is an embedding. By the Łoś theorem for definable ultrapowers and ultrapowers:
    \begin{align*}
        M^{(I)}/\mathcal{U}'\models \varphi([\beta_1]_{\sim'},\dots,[\beta_k]_{\sim'}) &\Leftrightarrow \bbl\varphi(\beta_1(i),\dots,\beta_k(i))\bbr \in \mathcal{U}'\\
        &\Leftrightarrow\bbl\varphi(\beta_1(i),\dots,\beta_k(i))\bbr \in \mathcal{U}\tag{*}\label{tagdefinable}\\
        &\Leftrightarrow M^{I} \models \varphi([\beta_1]_\sim,\dots,[\beta_k]_\sim),
    \end{align*}
    where (\ref{tagdefinable}) follows from $\bbl\varphi(\beta_1(i),\dots,\beta_k(i))\bbr \in {\tilde{\mathcal{P}}(I)}$.
\end{proof}

In the following theorem we mention a result of Skolem formulated in  the modern framework of ultrapowers. We refer the interested reader to~\cite{Dimitrov2020} for a description of Skolem's original model.

\begin{theorem}[Skolem]
    Consider $\mathbb{N}$ in the language $L_{PA}=\{0,1,+,\cdot ,\leq\}$, let $L=L_{PA}$ and $M=I=\mathbb{N}$. Then $\mathbb{N}^{(\mathbb{N})}$ is full, and for every non-principal ultrafilter $\mathcal{U}$ on $\tilde{\mathcal{P}}(I)$ the structure $\mathbb{N}^{(\mathbb{N})}/\mathcal{U}$ is a countable elementary extension of $\mathbb{N}$ which is not isomorphic to $\mathbb{N}$.
\end{theorem}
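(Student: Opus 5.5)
We prove the three claims in turn: fullness of $\mathbb{N}^{(\mathbb{N})}$, elementarity of the diagonal embedding via Theorem~\ref{O:Los}, and non-isomorphism, which also gives non-standardness. Countability is immediate. There are only countably many $L_{PA}$-formulas with parameters from $\mathbb{N}$, so $\mathbb{N}^{(\mathbb{N})}$ is countable, and so is its quotient by $\sim$.

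\emph{Fullness.} Let $\varphi(x,\overline y)$ be an $L_{PA}$-formula and $\beta_1,\dots,\beta_k\in\mathbb{N}^{(\mathbb{N})}$. We use the well-ordering of $\mathbb{N}$ to choose witnesses uniformly. Define $\alpha(i)$ to be the least $x$ with $\varphi(x,\beta_1(i),\dots,\beta_k(i))$ if such $x$ exists, and $\alpha(i)=0$ otherwise. The graph of $\alpha$ is defined by
\[\bigl(\varphi(x,\overline{\beta}(i))\land(\forall z<x)\lnot\varphi(z,\overline{\beta}(i))\bigr)\lor\bigl(x=0\land(\forall z)\lnot\varphi(z,\overline{\beta}(i))\bigr).\]
Here each $\beta_j(i)$ is replaced by an existentially quantified variable constrained by the definition of the graph of $\beta_j$. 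This is a formula with parameters, so $\alpha\in\mathbb{N}^{(\mathbb{N})}$, and (\ref{O:full}) holds pointwise for every $i$.

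\emph{Elementary extension.} For $m\in\mathbb{N}$ let $c_m$ be the constant function with value $m$; it lies in $\mathbb{N}^{(\mathbb{N})}$ since it is defined by $x=m$. Put $d(m)=[c_m]_\sim$. By Theorem~\ref{O:Los},
\[\mathbb{N}^{(\mathbb{N})}/\mathcal{U}\models\varphi(d(m_1),\dots,d(m_k))\iff\{i;\ \mathbb{N}\models\varphi(m_1,\dots,m_k)\}\in\mathcal{U}.\]
The set on the right is either all of $\mathbb{N}$ or empty, according to whether $\mathbb{N}\models\varphi(\overline m)$. Hence $d$ is an elementary embedding, and we identify $\mathbb{N}$ with its image. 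The set $\{i;\ i\in I\}=\mathbb{N}$ is definable, so the restriction to $\tilde{\mathcal{P}}(I)$ causes no issue.

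\emph{Non-isomorphism.} Consider the identity function $\mathrm{id}\in\mathbb{N}^{(\mathbb{N})}$. For each $m$, the set $\{i;\ i>m\}$ is definable and cofinite. Its complement $\{0,\dots,m\}$ is a finite definable set, and it cannot lie in $\mathcal{U}$: a finite set in an ultrafilter would contain a singleton in $\mathcal{U}$, making $\mathcal{U}$ principal. Here we read ``non-principal on $\tilde{\mathcal{P}}(I)$'' as containing no singletons; singletons are definable, so this reading is legitimate. Thus $\{i;\ i>m\}\in\mathcal{U}$, and Łoś gives $[\mathrm{id}]_\sim>d(m)$ for every $m$. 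So $[\mathrm{id}]_\sim$ is non-standard. A structure containing an element above every numeral $d(m)$ cannot be isomorphic to $\mathbb{N}$, because any isomorphism must send each numeral to itself, while every element of $\mathbb{N}$ equals some numeral.

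\emph{Expected obstacle.} The most delicate step is fullness, specifically making sure the witness function is definable with only the allowed parameters. This works because $\mathbb{N}$ has definable least elements, which is exactly why arithmetic admits this construction. The only other care needed is the interpretation of non-principality in the restricted Boolean algebra $\tilde{\mathcal{P}}(I)$, handled above.
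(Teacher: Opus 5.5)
Your proposal is correct and follows the paper's own route: fullness comes from the definable least-witness function. The parts you add are the standard details the paper leaves as ``standard'': countability, elementarity of $m\mapsto[c_m]_\sim$ via Theorem~\ref{O:Los}, and non-standardness of $[\mathrm{id}]_\sim$ using non-principality, which you correctly read as containing no singleton.
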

\begin{proof}
The fullness follows easily from the definability of the least witness for valid existential formulas. The remaining part of the proof is standard.
\end{proof}

\subsubsection{Restricted Ultrapowers}\label{F}

In the context of this article, restricted ultrapowers are defined by means of families $F$ of functions which are all definable in the underlying model. This context is sufficiently general for the purposes of this article (the number sort in \Krno's forcing with random variables can be analogously, using uncountable structures, see Definition \ref{def:ultrap}).

\begin{definition}\label{def:restricted-ultrapower}
    Let $L$ be a language, $M$ a countable $L$-structure, a countable $I\subseteq M$ and let $F\subseteq M ^{(I)}$ such that it is \emph{$L$-closed}, that is, for every $k$-ary function symbol $f\in L$, and every $\alpha_1,\dots,\alpha_k \in F$, we have $(i\mapsto f(\alpha_1(i),\dots,\alpha_k(i)))\in F$. Let $\mathcal{U}$ be an ultrafilter on $\tilde{\mathcal{P}}(I)$, then we define the restricted ultrapower $F/\mathcal{U}$ as an $L$-structure in the following way: 
    \begin{itemize}
        \item the universe is $F/\sim$, where $\alpha \sim \beta$ if $\{i\in I;\alpha(i)=\beta(i)\} \in \mathcal{U}$,
        \item for every $k$-ary function symbol $f\in L$, $f(x_1,\dots,x_k)$ is interpreted as $([\alpha_1]_{\sim},\dots,[\alpha_k]_{\sim}) \mapsto [f(\alpha_1,\dots,\alpha_k)]_\sim$,
        \item for every $k$-ary relation symbol $R\in L$, $$F/\mathcal{U}\models R(\alpha_1,\dots,\alpha_k) \iff \{i\in I; R(\alpha_1(i),\dots,\alpha_k(i))\}\in \mathcal{U}.$$
    \end{itemize}
 We call $F/\mathcal{U}$ an \emph{$F$-restricted ultrapower}.
\end{definition}

One can check easily that if $F/\mathcal{U}$ is full in the sense of (\ref{O:full}), then Łoś theorem \ref{O:Los} and Corollary \ref{O:Losc} still apply.

Bydžovský and Müller used restricted ultrapowers to obtain models of the theory $T_{PV}$. We will not describe the theory $T_{PV}$ in detail, but it can be understood as the theory of all true universal arithmetical sentences in a language which contain a function symbol for every Turing machine whose computations terminate after polynomially many steps in the length of the input, and such a symbol represent the function computed by this Turing machine. It extends a subtheory of $S^1_2$ denoted $PV_1$.

\begin{theorem}[{\cite[Corollary 1.5]{bydvzovsky2020polynomial}}]\label{pvcomplete}
    Every countable model of $T_{PV}$ is isomorphic to an $F$-restricted ultrapower $F/\mathcal{U}$ for a family $F$ of polynomial time computable functions $\N \to \N$ and an ultrafilter $\mathcal{U}$ on $\mathbb{N}$.
\end{theorem}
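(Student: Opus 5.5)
The natural strategy is to build, for a given countable model $K\models T_{PV}$, an index set, a family $F$ and an ultrafilter $\mathcal{U}$ directly out of $K$. I would fix an enumeration $(a_k)_{k\in\N}$ of the universe of $K$ and take $I=\N$. The family $F$ is the set of all functions $i\mapsto f(i)$, where $f$ ranges over the polynomial time functions, i.e. the interpretations in $\N$ of the function symbols of the language of $T_{PV}$. This family is countable. It is $L$-closed in the sense of Definition~\ref{def:restricted-ultrapower}, because polynomial time functions are closed under composition and every symbol of the language names such a function.

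This choice alone does not reach all of $K$, since each $[f]$ would be the image of a single generator. So I would first reduce to a finitely generated, or rather ``singly generated'', case. Since $T_{PV}$ is universal and its language has a pairing function, every finitely generated substructure $K_0=\langle a_0,\dots,a_k\rangle$ of $K$ is a model of $T_{PV}$. Moreover, every element of $K_0$ has the form $f(c)$, where $c$ codes the tuple $(a_0,\dots,a_k)$ and $f$ is polynomial time. For such a $K_0$ I would define
\[
\mathcal{U}_c=\{\{i\in\N;\ \N\models \psi(i)\}\ ;\ \psi \text{ open},\ K\models\psi(c)\},
\]
which is the type of $c$ restricted to quantifier-free formulas. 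Open formulas in this language are equivalent in $T_{PV}$ to atomic formulas of the form $g(x)=0$ with $g$ polynomial time. Hence $\mathcal{U}_c$ is a filter: it is closed under intersection, it does not contain $\varnothing$, and the relevant sets are decidable in polynomial time. It is in fact an ultrafilter on the algebra of polynomial time decidable subsets of $\N$. I would then extend it to an ultrafilter $\mathcal{U}$ on $\mathcal{P}(\N)$. Sets outside the algebra never matter, because equality and atomic relations between elements of $F$ always produce polynomial time sets.

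The map $[f]_{\mathcal{U}}\mapsto f^K(c)$ is well defined and injective. Indeed, $f(i)=g(i)$ holds on a set in $\mathcal{U}$ exactly when the open formula $f(x)=g(x)$ lies in the type of $c$, that is, exactly when $K\models f(c)=g(c)$. The same argument shows that the map preserves the atomic relations, and it respects function symbols by composition. Its range is $K_0$, so $F/\mathcal{U}\cong K_0$. For this step only universal sentences true in $\N$ are used, since every atomic fact about elements of the form $f(c)$ is checked pointwise. This is the standard Herbrand-style argument.

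The main obstacle is the passage from finitely generated substructures to all of $K$. A single generator $c$ exists only if $K$ is itself finitely generated. For a general countable $K$ I would replace $I=\N$ by a sequence-coding index. Here I would take $F$ to consist of functions that depend polynomially on the first $k$ coordinates of a code, for varying $k$, and $\mathcal{U}$ to be the union of the compatible filters $\mathcal{U}_{c_k}$, where $c_k$ codes $(a_0,\dots,a_k)$. Compatibility should follow from the fact that projections are polynomial time, so that the type of $c_k$ is recovered from the type of $c_{k+1}$. Making this coding precise, while staying inside functions $\N\to\N$ as the statement requires, is where I expect the real work to lie. An alternative route would present $K$ as the direct limit and show that a suitable single restricted ultrapower absorbs it. This is the step for which I would consult \cite{bydvzovsky2020polynomial}.
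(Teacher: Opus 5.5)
The paper gives no proof of this statement. It only cites \cite[Corollary 1.5]{bydvzovsky2020polynomial}, where the result belongs to a Hirschfeld-style analysis: countable models of $T_{PV}$ are realized as existentially closed substructures of polynomial time ultrapowers, and these can be taken to be Keisler limit ultrapowers. So the question is whether your argument stands on its own.

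Your singly generated case is correct and complete. Every implication between open formulas you need is a true universal sentence, hence in $T_{PV}$. So the open type $\mathcal{U}_{c}$ is an ultrafilter on the polynomial time sets, and $[f]\mapsto f^K(c)$ is an isomorphism onto the substructure generated by $c$.

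The actual content of the statement, though, is the case where $K$ is not finitely generated. There you only announce a plan and defer its execution to the reference, so the proof is incomplete at exactly the step you flag. Moreover, the plan read literally fails. The filters $\mathcal{U}_{c_k}$ all live on the same set $\N$ (the range of the generators), and they are mutually incompatible there. Indeed $\{x;\ \mathrm{len}(x)=k+1\}\in\mathcal{U}_{c_k}$ for each $k$, and these sets are pairwise disjoint, so the union of the $\mathcal{U}_{c_k}$ is not even a filter base. What must be united are their pullbacks to the index set.

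With that correction the plan goes through, and no new idea is needed.
\begin{itemize}
\item Read $i\in\N$ as the code of a finite sequence. Let $p_k=\mathrm{tr}_{k+1}$ be truncation to the first $k+1$ entries, so that $p_k=\mathrm{tr}_{k+1}\circ p_l$ holds \emph{exactly} (not just modulo $\mathcal{U}$) for $k\le l$.
\item Let $c_k\in K$ be the code of $(a_0,\dots,a_k)$ built in $K$ by iterated append. True universal identities give $K\models \mathrm{tr}_{k+1}(c_l)=c_k$ and $K\models \mathrm{len}(c_k)=k+1$.
\item Put $\mathcal{F}_k=\{p_k^{-1}(S);\ S\in\mathcal{U}_{c_k}\}$. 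The identity $\mathrm{tr}_{k+1}(c_{k+1})=c_k$ yields $\mathcal{F}_k\subseteq\mathcal{F}_{k+1}$.
\item Since $p_k$ is the identity on codes of length $k+1$, we get $p_k^{-1}(S)\supseteq S\cap\{x;\ \mathrm{len}(x)=k+1\}\neq\varnothing$. Hence $\bigcup_k\mathcal{F}_k$ generates a proper filter; extend it to an ultrafilter $\mathcal{U}$.
\item Because each $\mathcal{U}_{c_l}$ is ultra on polynomial time sets, $p_l^{-1}(S)\in\mathcal{U}$ iff $S\in\mathcal{U}_{c_l}$, for every polynomial time $S$.
\item Let $F=\{f\circ p_k;\ f \text{ polynomial time},\ k\in\N\}$. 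The exact identity $p_k=\mathrm{tr}_{k+1}\circ p_l$ makes $F$ closed under the function symbols. It also turns every equality or atomic set between members of $F$ into $p_l^{-1}$ of a polynomial time set.
\item Therefore $[f\circ p_k]_{\mathcal{U}}\mapsto f^K(c_k)$ is a well-defined embedding, exactly as in your finitely generated case. It is onto, since $a_k=\mathrm{last}^K(c_k)$.
\end{itemize}
Completed this way, your argument proves exactly the stated isomorphism directly and elementarily. Your $F/\mathcal{U}$ is in fact a limit ultrapower for the filter generated by the kernels of the $p_k$. What it does not give is the existential closedness inside the full polynomial time ultrapower that the cited work also obtains.
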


Therefore, while it is not known how to use restricted ultrapowers to obtain unprovability of universal formulas (as all true universal formulas are valid in models of $T_{PV}$), we know that the method is strong enough to yield unprovability over $T_{PV}$ of any formula of higher quantifier complexity.

We will now state the result of Bydžovský and Müller obtained by restricted ultrapowers about a complexity theoretic statement true in an ultrapower but unknown to hold in the standard model. The statement in question is of the following form: `Every set decidable in polynomial time can by decided by a sequence of Boolean circuits with at most $O(n^k)$ gates, for some $k$, where $n$ is the length of the input.' For precise definition of these notions, we refer the interested reader to~\cite{arora2009computational}. The statement of the theorem mentions the notion of a \emph{Herbrand saturated} model (see~\cite{avigad2002saturated}). The precise meaning is not important for our exposition; we note that every model of a universal theory can always be $\exists$-elementarily extended to an Herbrand saturated model. Let $g$ be a function symbol in the language of $T_{PV}$, and let $LB[g](s, n)$ be a formula in the language of $T_{PV}$ with variables $s$ and $n$ that expresses the property ``For all circuits $C$ of size at most $s$ there is $x$ of bit-length $n > 0$ such that $g(x) \neq C(x)$.''

\begin{theorem}[{\cite[Theorem 1.1]{bydvzovsky2020polynomial}}]
    Let $k\in \N, 0<\epsilon<1$ be rational, and $M$ a Herbrand saturated model of $T_{PV}$. Then there is restricted ultrapower $F/\mathcal{U}$, $F\subseteq M^{(M)}$, consists only of polynomial time computable functions, such that
$F/\mathcal{U}\models \forall z \exists \ell LB[g](\abs{z}\cdot \ell ^k,\ell).$
\end{theorem}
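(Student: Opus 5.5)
Let $g$ and the constants $k,\epsilon$ be fixed. The plan is to construct $F$ and $\mathcal{U}$ together, so that the number $z$ of the ultrapower is handled by choosing the index set and the ultrafilter, and the witness $\ell$ is given by a polynomial time function. Take $I = M$ as the index set, viewing $i\in M$ as a length parameter. Let $F\subseteq M^{(M)}$ be the family of functions $i\mapsto f(i,\bar c)$, where $f$ is a $T_{PV}$ function symbol and $\bar c\in M$ are parameters. This family is $L$-closed by composition. Its elements are polynomial time computable in the internal sense of $M$, and by the Herbrand saturation of $M$ the family $F$ is "full" for existential statements of universal type. Concretely, whenever $M\models \forall i\,\exists y\,\psi(i,y)$ with $\psi$ open, Herbrand saturation (via Herbrand's theorem) supplies finitely many PV terms that witness $y$ by cases. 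This is the replacement for the fullness condition (\ref{O:full}): with it, Łoś' theorem \ref{O:Los} holds for $\forall\exists$-formulas with open matrix, at least in the direction needed.

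Next, reduce the target sentence to a statement in $M$ about each index. An element $z$ of $F/\mathcal{U}$ is a class $[\alpha]$, and we need some $\beta\in F$ with $\{i : M\models LB[g](|\alpha(i)|\cdot\beta(i)^k,\beta(i))\}\in\mathcal{U}$. The formula $LB$ is universal: it says that for every circuit $C$ there is an $x$, bounded by $\beta(i)$, with $g(x)\neq C(x)$. So what we must ensure is that this set of indices is large, and that any failure is witnessed in the ultrapower by an element of $F$ which does not lie in $\mathcal{U}$-many coordinates. The idea is to build $\mathcal{U}$ by a diagonal (Rasiowa--Sikorski style) construction over the countably many pairs $(\alpha,\beta)$ and candidate counterexample functions $C\in F$. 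At each stage we maintain a filter base of definable sets $A\subseteq M$ in which the intended lower bound holds. The base is then extended by the sets where the chosen witness $\beta$ succeeds against the given circuit function $C$, which is possible as long as the intersection remains nonempty.

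The central step, and the main obstacle, is showing that these extensions are always possible. Fix $\alpha\in F$ and a nonempty definable $A$. We need a polynomial time $\beta$ and infinitely many $i\in A$ (indeed a definable subset of $A$) on which no circuit of size $|\alpha(i)|\beta(i)^k$ computes $g$ on inputs of length $\beta(i)$. The route I would try first is a Kannan-type diagonalization inside $M$. Take $g$ to be a PV function that, on inputs of length $\ell$, simulates and diagonalizes against the lexicographically first circuit of size $\ell^{k+1}$. Use the parameter $i$ to encode $\ell=\beta(i)$ large relative to $|\alpha(i)|$; for instance, set $\beta(i)=|\alpha(i)|+|i|$ and restrict $A$ to indices with $|\alpha(i)|\le |i|^{\epsilon}$. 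Since $\alpha$ is polynomial time, the density of $A$ can be arranged by moving along the index set.

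If $g$ is arbitrary rather than chosen, the argument must instead use the Herbrand saturation. In that case one shows that a failure of the lower bound on all large indices would yield a PV-definable circuit family computing $g$. Contradicting this with a counterexample built from $\epsilon$ and $k$, using a counting argument formalized in $M$, is where I expect the real difficulty to lie. Once this step is done, a routine bookkeeping argument shows that the limit ultrafilter $\mathcal{U}$ makes $F/\mathcal{U}\models\forall z\,\exists\ell\, LB[g](|z|\ell^k,\ell)$.
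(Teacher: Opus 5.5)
There is a genuine gap, and it is in the step you call central. The paper gives no proof of its own here; it only quotes \cite{bydvzovsky2020polynomial}, so the comparison is against what a proof must accomplish.

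Your reduction needs, for each $\alpha$ and each condition $A$, a nonempty definable $A'\subseteq A$ on which $M$ itself satisfies $LB[g](|\alpha(i)|\beta(i)^k,\beta(i))$. That is a genuine non-uniform circuit lower bound for a polynomial-time $g$ at lengths of $M$, and nothing in the proposal supplies it.
\begin{itemize}
\item A $g$ that diagonalizes against the lexicographically first circuit of size $\ell^{k+1}$ defeats that one circuit and no other.
\item Kannan's argument beats all small circuits by selecting the lexicographically first truth table computed by none of them. That selection quantifies over all circuits and inputs, so it is only known to lie in the polynomial hierarchy, not in $\mathrm{P}$.
\item For no explicit polynomial-time $g$ is a bound of this kind known even in $\mathbb{N}$; this is the open problem $\mathrm{P}\not\subseteq\mathrm{SIZE}[n^k]$ the paper mentions. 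You give no reason why $T_{PV}$ or Herbrand saturation would force such lengths into an arbitrary $M$.
\item Your fallback for arbitrary $g$ is not merely hard but false: for constant $g$ the conclusion fails. So $g$ must be a specific function chosen for $k$.
\end{itemize}
Two smaller points. Requiring $|\alpha(i)|\le|i|^{\epsilon}$ can make $A$ finite (take $\alpha(i)=i$). And a diagonalization over countably many requirements presupposes that $M$, hence $F$, is countable.

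The missing idea is the reason the theorem is only a consistency result. In $F/\mathcal{U}$ a circuit is a class $[\gamma]$ of a polynomial-time function of the index, and a refuting input must be a class $[\delta]$ with $\delta\in F$. So what must be shown is not a coordinatewise lower bound in $M$. It is that circuits of size $|z|\ell^k$ generated in polynomial time from the index can be refuted by some polynomial-time $\delta$ on a $\mathcal{U}$-large set.

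Conversely, suppose $g$ had polynomial-time generated circuits of size $O(n^k)$ in $\mathbb{N}$. That is a universal sentence, so it belongs to $T_{PV}$ and holds in $M$ and in every $F/\mathcal{U}$, which kills the conclusion. Hence $g$ has to be designed to beat uniformly generated circuits. That uniform lower bound is the complexity-theoretic core; Kraj\'{\i}\v{c}ek--Oliveira \cite{krajicek2017unprovability} reach the corresponding statement through KPT witnessing. Herbrand saturation is the tool linking truth in $M$ to polynomial-time witnesses in $F$, playing the role KPT plays there.

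Your outline never uses this uniformity, so the concluding bookkeeping has nothing to rest on.
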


The previous result can be interpreted as the unprovability of the inclusion of complexity classes $\text{P}\subseteq \text{SIZE}[n^k]$ for any fixed $k$ in the theory $T_{PV}$, where $\text{P}$ is the complexity class of polynomial time decidable problems and $\text{SIZE}[n^k]$ is the class of all problems decidable with a family of circuits of size $O(n^k)$. It seems that there is a wide agreement among theorists that indeed $\text{P}\not \subseteq \text{SIZE}[n^k]$, but this is not known to hold in the standard model.

\section{Interpreting frameworks K and U in framework AM}\label{sectionrelationship}\label{sec:33}

\subsection{Complete Boolean algebras and the countable L{\"o}wenheim--Skolem argument}\label{sec:KAM}

As we will observe, both frameworks K and U can be interpreted in AM by first formulating both as Boolean-valued models with respect to certain complete (uncountable) Boolean algebras $\Bk$ and $\CoRO$\footnote{The Boolean completion (the regular open algebra) of the Cohen forcing.}, respectively, and then use the standard set-theoretic method of defining a forcing relation between elements of the Boolean algebra and sentences in the forcing language by postulating $$b \Vdash \varphi \iff b \le \bv{\varphi}.$$ There are two additional tasks to deal with:

\begin{itemize}
\item
Use a L{\"o}wenheim--Skolem argument to collapse the model $\M$ and the algebra $\Bk$ to a countable subalgebra $\Bk'$ for framework K, and collapse $\CoRO$ to a countable subalgebra $\CoRO'$ for framework U. This ensures that the resulting forcing satisfies the countability condition postulated by AM.
\item
To extend the framework AM to accommodate the two-sorted language in framework K.
\end{itemize}

Importantly, while the framework AM can formally interpret Krajíček's forcing and restricted ultrapowers, it does not help with verifying the validity of formulas: the principal theorem \ref{principal theorem} does not apply because \Krno's forcing and restricted ultrapowers are not conservative. Yet, one can interpret this result as an indication that the definition of forcing given by \AMno~is sufficiently general for the current applications; moreover it shows the the forcing framework  is complete with regards to constructing models of $T_{PV}$  because it interprets the ultrapower construction which we know to be sufficiently general (Theorem~\ref{pvcomplete}). 

\textbf{The random algebra $\Bk$.} 
Despite its non-standard definition, the complete Boolean algebra $\B$ from Section \ref{sec:32} is isomorphic to a probability measure algebra $\Bk$ (also a ``random algebra'') defined on the space $2^\kappa$ for some uncountable $\kappa$.\footnote{Let $\kappa$ be an infinite cardinal and let $T$ be the set of all finite functions from $\kappa$ to $2$. Let $\Sigma_\kappa$ be a $\sigma$-algebra generated by the system $\set{S_t}{t \in T}$, where $S_t = \set{f \in 2^\kappa}{t \sub f}$. The product measure $\lambda_\kappa$ is the unique $\sigma$-additive measure on $\Sigma_\kappa$ such that $\lambda_\kappa(S_t) = \frac{1}{2^{|t|}}$. The random algebra $\Bk$ is the quotient $\Sigma_\kappa/\mbox{Null}$, where Null is the ideal of null sets with respect to measure $\lambda_\kappa$. See \cite{RH:bounded} for more details.}  This follows from Maharam theorem \cite{maharam1942homogeneous} and theorems in non-standard analysis in \cite[Lemma 2.2]{jin1992maharam} (see \cite[Section 5.2]{RH:bounded} for more details). The cardinal $\kappa$ in $2^\kappa$ depends on the size of the starting model $\M$ of true arithmetic.\footnote{The exact size of $\M$ is not relevant for the purposes of the calculations in Section \ref{sec:32}, the only requirement is that $\M$ is $\aleph_1$-saturated.} To have a concrete representation, we can start with $\CH$ (the Continuum Hypothesis) in which case $\M$ can be chosen to be  an $\aleph_1$-saturated model of size $\aleph_1$ (and hence fully saturated). By a general theorem,  if $M,N$ are elementarily equivalent first-order structures of size $\kappa$ which are fully saturated, then they are isomorphic (see \cite[Theorem 10.1.8]{MT:Hod}). It follows that $\M$ is up to isomorphism the unique model of true arithmetic which is $\aleph_1$-saturated and of size $\aleph_1$. In this case, $\B$ is isomorphic to the algebra generated by the null ideal on the product measure space $2^{\aleph_1}$ (we denote it $\Bi$). This algebra is well-known and used in set theory to add $\aleph_1$-many new reals (real numbers, or equivalently, subsets of natural numbers). See Section \ref{sec:Maharam} where we compare the use of this algebra in set theory and in arithmetic. 

\textbf{The Cohen algebra $\CoRO$.} It is well-known that there is up to isomorphism just a single countable atomless Boolean algebra. Let $\CoRO$ denote its completion. We call $\CoRO$ the Cohen algebra. More generally, any countable dense suborder of $(\CoRO,<)$, where $<$ is the canonical ordering on $\CoRO$, is called the Cohen forcing. A typical representation of Cohen forcing is the set of all finite sequences of zero and ones, ordered by reverse inclusion. For the purposing of interpreting forcing with countable restricted ultrapowers, we will consider a less common representation which we denote $\Pfin$ for some countable subset $I$ of a countable model $M$ of arithmetic.  $\Pfin$ is an atomless countable Boolean algebras containing equivalence classes of countably many subsets of $I$,  quotioned by the ideal of (standardly) finite sets.

\brm
We find it interesting that the known forcings in arithmetic are derived just from two types of Boolean algebras: $\CoRO$ and $\Bk$. Moreover, after the identification with restricted ultrapowers via generic ultrafilters on $\CoRO$ and $\Bk$, the variability of the forcing arguments seems more restricted still. This contrasts with the much wider range of Boolean algebras used in set theory (see the discussion concerning  the internal and external perspectives in set theory regarding the variability of forcing notions in Section \ref{sec:1}).
\erm

\subsection{Framework K: Boolean-valued models and the random algebra $\B_\kappa$}\label{Kint}
The main goal of this section is to show that if we overcome certain cardinality-related obstacles we can show that the structures $K(F,G)$ define a forcing in the sense of Definition~\ref{forcing}.

The usual way of interpreting a Boolean-valued structure as a forcing notion (in the sense of Section \ref{sec:partially definable forcing}) is to take $\B\setminus\{\0\}$ as the forcing frame and define $b\Vdash\varphi$ iff $b\leq \bbl\varphi\bbr$. However, it is well known that a complete non-atomic Boolean algebra cannot be countable (it has size at least $2^{\aleph_0}$) which is a requirement for a forcing frame of the AM framework. We start by shrinking the $K(F,G)$ construction to a countable setting using the Löwenheim--Skolem theorem.

\begin{lemma}\label{lemmakfgshrink}
    Let  $n\in \Mc\setminus \N$, $L\subseteq L_{n}$ countable, let $\Omega$ be a sample space, let $F$ be an $L$-closed family of random variables, let $G$ be an $F$-compatible family of random functions and let $\B$ be the Boolean algebra as in Definition~\ref{defibooleanalgebra}.
    
    Then there is a countable $F'\subseteq F$, a countable $G'\subseteq G$, a countable Boolean subalgebra $\B'\subseteq \B$ and a valuation $\bbl-\bbr'$ of $L^2$-formulas with parameters from $F'$ and $G'$, such that they satisfy
    \begin{itemize}
        \item for every closed $L^2$-term $t$ with parameters from $F'$ and $G'$ there is a $\alpha\in F'$ such that
        $\bbl t=\alpha\bbr' = \1$,
        \item $\bbl-\bbr'$ commutes with $\lnot$, $\lor$, $\land$,
        \item for every $L^2$-formulas $\varphi(x)$ and $ \psi(X)$ with parameters from $F'$ and $G'$ each of the sets $\{\bbl\varphi(\alpha)\bbr'; \alpha\in F'\}$ and $\{\bbl\varphi(\Theta)\bbr'; \Theta \in G'\}$ have both a supremum and an infimum in $\B'$,
        \item $\bbl(\forall x)\varphi(x)\bbr' = \Land_{\alpha\in F'} \bbl\varphi(\alpha)\bbr'$ and $\bbl(\forall X)\psi(X)\bbr' = \Land_{\Theta\in G'} \bbl\psi(\Theta)\bbr'$, where $\varphi(x)$ and $\psi(X)$ are $L^2$-formulas with parameters from $F'$ and $G'$,
        \item $\bbl(\exists x)\varphi(x)\bbr' = \Lor_{\alpha\in F'} \bbl\varphi(\alpha)\bbr'$ and $\bbl(\exists X)\psi(X)\bbr' = \Lor_{\Theta\in G'} \bbl\psi(\Theta)\bbr'$, where $\varphi(x)$ and $\psi(X)$ are $L^2$-formulas with parameters from $F'$ and $G'$,
        \item $\bbl\varphi \bbr'=\bbl\varphi\bbr$, where $\varphi$ is an $L^2$-sentence with parameters from $F'$ and $G'$.
    \end{itemize}
    We shall denote the Boolean valued $L^2$-structure with the universe consisting of the number sort $F'$ and the set sort $G'$ and the $\B'$-valuation $\bbl-\bbr'$ by $K'(F',G')$.
\end{lemma}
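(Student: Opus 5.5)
The plan is a downward Löwenheim--Skolem closure argument, carried out as a countable iteration of Skolem-function-like choices. I define increasing countable sets $F_0\subseteq F_1\subseteq\cdots\subseteq F$, $G_0\subseteq G_1\subseteq\cdots\subseteq G$ and $B_0\subseteq B_1\subseteq\cdots\subseteq\B$, and take $F'=\bigcup_k F_k$, $G'=\bigcup_k G_k$ and $\B'=\bigcup_k B_k$. The valuation will be $\bbl-\bbr'=\bbl-\bbr$ restricted to sentences with parameters in $F'$ and $G'$. Then the last item holds by definition, and commuting with $\lnot,\lor,\land$ is inherited from Definition~\ref{def:Boolean}. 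Start with $F_0=\{\alpha\}$ and $G_0=\{\Theta\}$ for some $\alpha\in F$ and $\Theta\in G$, and $B_0=\{\0,\1\}$.

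At stage $k+1$, there are only countably many $L^2$-formulas and closed terms with parameters from $F_k\cup G_k$, because $L$ is countable. For each such item I do three things.
\begin{itemize}
\item For each closed term $t$, I add to $F_{k+1}$ the element $\alpha_t\in F$ to which $t$ evaluates. This element exists by $L$-closedness and $G$-closedness of $F$, and then $\bbl t=\alpha_t\bbr=\1$.
\item For each sentence $\varphi$, I add $\bbl\varphi\bbr$ to $B_{k+1}$.
\item For each formula $\varphi(x)$, I need witnesses for the quantifier. By Definition~\ref{def:Boolean}, $\bbl(\exists x)\varphi(x)\bbr=\Lor_{\alpha\in F}\bbl\varphi(\alpha)\bbr$. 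By Claim~III in the proof of the Key Lemma (ccc together with the reduction of suprema to countable ones), there is a countable $C_\varphi\subseteq F$ with $\Lor_{\alpha\in C_\varphi}\bbl\varphi(\alpha)\bbr=\bbl(\exists x)\varphi(x)\bbr$. I add $C_\varphi$ to $F_{k+1}$. The same applies to $\bbl(\forall x)\varphi(x)\bbr$, using a countable $D_\varphi$ realizing the infimum, and to set-sort formulas $\psi(X)$ with countable subsets of $G$ added to $G_{k+1}$.
\end{itemize}
Finally, $B_{k+1}$ is closed under the Boolean operations. Each $B_{k+1}$ then stays countable, so the unions are countable.

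The key verification is the quantifier clauses. Fix $\varphi(x)$ with parameters in $F'\cup G'$; it appears at some stage $k$. The set $S=\{\bbl\varphi(\alpha)\bbr : \alpha\in F'\}$ is contained in $\{\bbl\varphi(\alpha)\bbr : \alpha\in F\}$, so every member of $S$ lies below $\bbl(\exists x)\varphi(x)\bbr$. $S$ also contains the witnesses $\bbl\varphi(\alpha)\bbr$ for $\alpha\in C_\varphi$, whose supremum in $\B$ is exactly $\bbl(\exists x)\varphi(x)\bbr$. Hence the supremum of $S$ in $\B$ equals $\bbl(\exists x)\varphi(x)\bbr$. This value lies in $\B'$, having been added at stage $k+1$. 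An element of $\B'$ that is the supremum in $\B$ is a fortiori the supremum in $\B'$. So sup and inf exist in $\B'$, and $\bbl(\exists x)\varphi(x)\bbr'=\Lor_{\alpha\in F'}\bbl\varphi(\alpha)\bbr'$ with the join computed in $\B'$; the universal and set-sort clauses are identical. The term clause follows from the first bullet above.

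The main obstacle is making the countable witnesses rigorous, since $\bbl-\bbr$ uses arbitrary joins over uncountable $F$. This requires Claim~III in exactly the form ``a supremum of an arbitrary subset of $\B$ equals the supremum of a countable subfamily''. I would state it explicitly: take a maximal antichain below the elements of the family, which is countable by ccc, and choose for each antichain member a family element above it. A minor point is that infima are handled via complements, since $\Land$ equals $\lnot\Lor\lnot$.
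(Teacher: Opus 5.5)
Your proof is correct, but it takes a different route from the paper's. The paper applies the downward L\"owenheim--Skolem theorem once, to the three-sorted structure $S$ with sorts $F$, $G$, $\B$ in the countable language consisting of $\le$ and function symbols $\mathrm{val}_\varphi(\overline{\alpha},\overline{\Theta})=\bbl\varphi(\overline{\alpha},\overline{\Theta})\bbr$. It takes a countable elementary substructure with sorts $F'$, $G'$, $\B'$ and reads off every bullet ``by elementarity''. Each needed fact is first-order over $\le$ and so transfers: closure of $\B'$ under the Boolean operations, term witnesses, and ``$\bbl\exists x\varphi\bbr$ is the least upper bound of the values $\mathrm{val}_\varphi(\alpha,\overline{p})$''. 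You instead build the substructure by hand, closing only under what the lemma actually needs. You obtain countable witness families from the ccc (Claim II) via the maximal-antichain refinement of Claim III. You are right that this refinement is necessary, since Claim III as stated only yields some countable $C\subseteq\B$ with the same upper bounds, not a subfamily of the given values. The trade-off is as follows. The paper's argument never uses the ccc: countability comes solely from the countability of the language, so it works verbatim for any complete Boolean-valued structure over a countable language. However, elementarity only guarantees that $\bbl\exists x\varphi\bbr$ is the supremum of $\{\bbl\varphi(\alpha)\bbr;\alpha\in F'\}$ computed inside $\B'$. Your construction relies on the measure algebra being ccc, but gives the stronger conclusion that this join (and likewise each quantifier meet) is already the supremum (resp.\ infimum) in $\B$. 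So the inclusion $\B'\subseteq\B$ preserves all quantifier joins and meets, and your argument also makes explicit the verifications the paper compresses into one sentence.
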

\begin{proof}
    Consider the structure $S$ whose universe consists of the number sort $F$, the set sort $G$ and the Boolean algebra sort $\B$, and whose language is
    \[L_S =\{\text{val}_\varphi; \varphi\text{ is an $L^2$-formula}\} \cup \{\leq \},\]
    where for each $L^2$-formula $\varphi(\overline x,\overline X)$ we define the interpretation of $\text{val}_\varphi$ to be the function whose arguments are from the number sort and the set sort and whose output is from the Boolean algebra sort
    \[\text{val}_{\varphi}:(\overline \alpha, \overline \Theta) \mapsto \bbl \varphi(\overline \alpha, \overline \Theta)\bbr,\]
    and $\leq$ is the partial order of the Boolean algebra.
    
    Note that $L_S$ is countable and therefore we can use the Löwenheim--Skolem theorem to obtain a countable elementary substructure $S'$ of $S$, whose universe consists of the number sort $F'$, the set sort $G'$ and the Boolean algebra sort $\B'$.
    
    We now define a $\B'$-valued structure $K'$ whose universe consists of the number sort $F'$ and the set sort $G'$ and whose valuation of $L^2$-formulas is defined by    \[\bbl \varphi (\overline \alpha,\overline \Theta)\bbr' = \text{val}_{\varphi}(\overline \alpha,\overline \Theta).\]
    By the elementarity of $S'$ the resulting structure is as required.
\end{proof}

Let us note that this demonstrates that the $\aleph_1$-saturation of $\Mc$, which is used for the completeness of the Boolean algebra $\B$, is not essential for the (un)provability consequences of the structures $K(F,G)$. A countable model is sufficient;\footnote{More formally, this corresponds to constructing a countable elementary submodel $(N,\in) \preceq (H(\kappa),\in)$ of $(H(\kappa),\in)$ for some sufficiently large regular uncountable cardinal $\kappa$, with $N$ containing an uncountable $\aleph_1$-saturated model $M$ and the complete Boolean algebra $\B$ (and other relevant objects). Note that $H(\kappa)$ denotes the set of all sets $x$ with transitive closure of size $<\kappa$; it is known that $(H(\kappa), \in)$ is a transitive model of $\mathrm{ZFC}$ minus the powerset axiom, and thus supports all the usual constructions with fixed objects performed in the ambient set theory.  Let $\pi: N \to \bar{N}$ be the transitive collapse of $(N,\in)$ onto a countable transitive model $(\bar{N},\in)$ of set theory. Then $\pi(M)$ and $\pi(\B)$ in $\bar{N}$ are countable and rich enough to support the required Boolean-valued calculations performed in $H(\kappa)$ (and hence in the ambient set theory). Even more generally, one can start with a countable transitive model $(V,\in)$ of set theory, and choose in $V$ an $\aleph_1$-saturated (in $V$) model of arithmetic $M$ and work entirely inside $V$, bypassing the the L{\"o}wenheim--Skolem construction and the transitive collapse. For the purposes of showing independence of sentences in bounded arithmetic, this is entirely sufficient (see \cite[Remark 2.4]{RH:bounded}).} however, we should like to add that this reduction is purely a formalistic exercise in the L{\"o}wenheim--Skolem theorems, the real intuition behind the construction is founded on saturated models. 

The previous Lemma is enough to obtain a countable poset. The only remaining difficulty is that Krajíček's forcing is defined for two-sorted languages $L^2$ and these are not handled explicitly by Atserias and Müller. In the following we extend the Definition~\ref{forcing} to the two-sorted setting.

\begin{definition}[Universal Forcing for two-sorted structures.]
\label{twosortedforcing}
    Let $L$ be a countable language, $L*$ its countable extension and a non-empty countable partially ordered set $(P,\preceq)$. Let $S=(S_0,S_1)$ be a countable $L^2$-structure and $L^*(M)$ be the extension of $L^*$ by all constants for every member of $M_0$ and $M_1$. A binary relation $\Vdash$ between elements of $P$ and $(L^*(M))^2$-sentences is called a \deff{forcing} if it satisfies the following properties for all sentences $\varphi, \psi$, all formulas $\chi(x)$, and all conditions $p, q$.
    \begin{enumerate}
        \item \deff{Universal pre-forcing recurrence}:
        \begin{itemize}
            \item $p \Vdash \neg \varphi \:$ if and only if  $\: \forall q \preceq p: \: q \nVdash \varphi$;
            \item $p \Vdash \varphi \land \psi \:$ if and only if  $p \Vdash \varphi$ and $p \Vdash \psi$;
            \item $p \Vdash \varphi \lor \psi \:$ if and only if $\: p \Vdash \neg(\neg \varphi \land  \neg \psi)$;
            \item $p \Vdash \forall x \chi(x) \:$ if and only if, for all $a \in S_0$, $\: p \Vdash \chi(a)$;
            \item $p \Vdash \exists x \chi(x) \:$ if and only if $\: p \Vdash \neg \forall x \neg \chi(x)$;
            \item $p \Vdash \forall X \chi(X) \:$ if and only if, for all $A \in S_1$, $\: p \Vdash \chi(A)$;
            \item $p \Vdash \exists X \chi(X) \:$ if and only if $\: p \Vdash \neg \forall X \neg \chi(X)$.
        \end{itemize}

        \item \deff{Extension}: if $p \Vdash \varphi$ and $q \preceq p$, then $q \Vdash \varphi$; i.e. the set $[\varphi]$ is \deff{downwards closed}.

        \item \deff{Stability}: if $\forall q \preceq p \: \exists r \preceq q \: : \: r \Vdash \varphi$, then $p \Vdash \varphi$.
    \end{enumerate}
\end{definition}

Here is the theorem with the extension to two-sorted langauges.\footnote{The theorem was independently proved by Johannes Heil during Radek Honzik's visit in Passau in May 2026.}

\begin{theorem}\label{theoremkrajicekismuller}
    Assume we have a structure $K'(F',G')$ and a Boolean algebra $\B'$ from Lemma~\ref{lemmakfgshrink}. Let $\M'$ be an elementary substructure of $\M$ containing $F'$ and $G'$, whose existence is guaranteed by the downward Löwenheim-Skolem theorem.
    
    The relation $\Vdash$ between elements of $\B'\setminus \{\0\}$ and $L^2$-formulas with parameters from $F'$ and $G'$ defined as
    \[b \Vdash \varphi \iff b\leq \bbl\varphi\bbr'\]
    is a forcing in the sense of Definition~\ref{twosortedforcing}, where $S_0=S_1=\M'$.
\end{theorem}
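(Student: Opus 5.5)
We verify the three clauses of Definition~\ref{twosortedforcing} directly from the properties of $\bbl-\bbr'$ listed in Lemma~\ref{lemmakfgshrink}. The quantifiers range over the number sort $F'$ and the set sort $G'$ of $K'(F',G')$. Although the statement names $S_0=S_1=\M'$, the constants of the forcing language that actually get evaluated are the elements of $F'$ and $G'$; if needed we identify them with elements of $\M'$, since $F',G'\subseteq\M'$. The one fact about Boolean algebras used throughout is the following. For $b\neq\0$ and any $c\in\B'$, if every nonzero $q\leq b$ satisfies $q\not\leq c$, then $b\wedge c=\0$, i.e.\ $b\leq\lnot c$. Indeed, otherwise $q=b\wedge c$ would be a nonzero extension of $b$ with $q\leq c$.

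\textbf{Extension and stability.} Extension is immediate: $q\leq b\leq\bbl\varphi\bbr'$. For stability, suppose $b\not\leq\bbl\varphi\bbr'$. Then $q=b\wedge\lnot\bbl\varphi\bbr'$ is a nonzero extension of $b$, and every nonzero $r\leq q$ satisfies $r\wedge\bbl\varphi\bbr'=\0$. Hence $r\not\leq\bbl\varphi\bbr'$, so no extension of $q$ forces $\varphi$, which proves stability by contraposition.

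\textbf{Propositional clauses.} For negation, if $b\leq\lnot\bbl\varphi\bbr'$ then no nonzero $q\leq b$ lies below $\bbl\varphi\bbr'$. Conversely, by the fact above, if no nonzero $q\leq b$ forces $\varphi$, then $b\leq\lnot\bbl\varphi\bbr'=\bbl\lnot\varphi\bbr'$. This uses that $\bbl-\bbr'$ commutes with $\lnot$. Conjunction follows from $b\leq c\wedge d$ iff $b\leq c$ and $b\leq d$. Disjunction follows because $\bbl\varphi\lor\psi\bbr'=\bbl\lnot(\lnot\varphi\land\lnot\psi)\bbr'$, again by commutation with the connectives.

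\textbf{Quantifier clauses.} For $\forall x$, Lemma~\ref{lemmakfgshrink} gives $\bbl(\forall x)\chi(x)\bbr'=\Land_{\alpha\in F'}\bbl\chi(\alpha)\bbr'$, with the infimum existing in $\B'$. Hence $b$ lies below it iff $b\leq\bbl\chi(\alpha)\bbr'$ for all $\alpha\in F'$; this is precisely why we need the infimum computed in $\B'$ rather than only in $\B$. The set-sort quantifier $\forall X$ is handled identically using $G'$. For $\exists x$, we use $\Lor_{\alpha}\bbl\chi(\alpha)\bbr'=\lnot\Land_\alpha\lnot\bbl\chi(\alpha)\bbr'$, which holds in any Boolean algebra where these suprema and infima exist. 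Together with commutation with $\lnot$ this gives $\bbl\exists x\,\chi\bbr'=\bbl\lnot\forall x\lnot\chi\bbr'$, and similarly for $\exists X$.

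\textbf{Main obstacle.} The only delicate point is bookkeeping rather than mathematics. The forcing must be stated over parameters from $F'$ and $G'$, and every closed term must reduce to a member of $F'$ with value $\1$; the first bullet of Lemma~\ref{lemmakfgshrink} guarantees this. We must also make sure that all suprema and infima are taken in $\B'$, not in $\B$. Once the correspondence of parameters is fixed, every clause reduces to the Boolean identities above.
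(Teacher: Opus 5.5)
Your proof is correct and is essentially the paper's argument. Your ``one fact'' is the paper's Claim (witnessed by $b\wedge\lnot\bbl\varphi\bbr'$), and your contrapositive proof of stability is the paper's supremum argument with $\bigvee\{d\le b;\ d\Vdash\varphi\}$ replaced by the element $b\wedge\bbl\varphi\bbr'\in\B'$ that this supremum equals, which spares you from justifying a supremum in the incomplete algebra $\B'$; your reading of the quantifier clauses as ranging over $F'$ and $G'$, with infima taken in $\B'$, is also what the paper's proof implicitly uses.
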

\begin{proof}
    In this proof the variables $b,c$ and $d$ will only quantify over the poset $P=\B'\setminus\{\0\}$. We will start by proving the following claim.

    \textbf{Claim:} For every $b\in P$, if $b\not \Vdash \varphi$ then $(\exists c\leq b)(c\Vdash \lnot \varphi)$.
    
    \emph{Proof of claim.} In the case that $\bbl\lnot \varphi\bbr'\land b=\0$, we get that $b\leq \bbl\varphi \bbr'$, a contradiction. Otherwise, $b \land \lnot\bbl\varphi\bbr'$ is both under $b$ and under $\bbl\lnot \varphi \bbr '$, thus $c=b\land \bbl\lnot \varphi \bbr'\Vdash \bbl\lnot \varphi\bbr'$ which proves the claim.
    
    In the rest of the proof, we will verify every property a forcing relation has to satisfy.

    \textbf{Extension}: Follows trivially from the transitivity of $\leq$.
    
    \textbf{Stability}: We will assume that for a $b\in P$ and $\varphi$ we have that 
    \[(\forall c\leq b)(\exists d\leq c)(d\Vdash \varphi),\tag{\dag}\]
    where the quantifiers are over $\B'\setminus \{\0\}$. Let $D= \{d\leq b; d\Vdash \varphi\}$. From the properties of suprema it follows that
    \[\Lor_{d\in D} d\leq   \bbl\varphi\bbr',\]
    it remains to show that the supremum $s=\Lor_{d\in D} d$ actually equals $b$. Assume for contradiction that $s<b$, that means that $b\land \lnot s$ is a non-zero element under $b$ and by $(\dag)$ there is some non-zero $d\leq b\land \lnot s$ such that $d\in D$. But then $d$ is both under $s$ and under $b\land \lnot s \leq \lnot s$, which implies that $d=\0$ which is a contradiction.

    \textbf{The negation rule:} We will show that for every $b$: \[b\Vdash \bbl\lnot \varphi\bbr'\text{ if and only if }(\forall c\leq b)(c\not \Vdash \varphi).\]
    The implication from left to right follows by Extension. We will now prove the implication from right to left, assume that $(\forall c\leq b)(c\not \Vdash \varphi)$. By the Claim it follows that $(\forall c\leq b)(\exists d\leq c)(d\Vdash \bbl\lnot \varphi\bbr')$. By Stability, we get that $b\Vdash \bbl\lnot\varphi\bbr'$.

    \textbf{The conjunction rule and the universal quantification rule:} For conjuction it is clear that from $b\leq \bbl\varphi \bbr'$ and $b\leq \bbl\psi\bbr'$ we get that $b\leq \bbl\varphi\bbr'\land \bbl\psi\bbr'$ and since $\bbl-\bbr'$ commutes with conjunctions we get that $b\Vdash \bbl\varphi\land \psi \bbr'$. For universal quantification an analogous argument with infinite suprema holds.

    \textbf{The disjunction rule and the existential quantification rule:} These follow directly from the equations 
    \[\bbl\varphi \lor \psi\bbr' = \bbl\lnot (\varphi \land \psi) \bbr',\]
    and
    \begin{align*}
        \bbl(\exists x)(\varphi(x))\bbr' &= \bbl\lnot (\forall x)(\lnot\varphi(x)) \bbr'\\
        \bbl(\exists X)(\varphi(X))\bbr' &= \bbl\lnot (\forall X)(\lnot\varphi(X)) \bbr'. \qedhere
    \end{align*}
\end{proof}

\subsection{Framework U: Boolean-valued models and the Cohen algebra $\CoRO$}\label{sec:ultrapowerforcing}

Let $M$ and $F$ as defined in Section \ref{F}. In this section, we show that countable $F$-restricted ultrapowers are a forcing construction in the sense of Definition~\ref{forcing}.  

The natural candidate for a complete Boolean algebra is the completion of the countable Boolean algebra $\Pfin$ of definable subsets of the countable index set $I$ modulo the ideal of (standardly) finite sets $\fin$. We denote the completion of this algebra $\CoRO$ (see Section \ref{sec:KAM} for more details).

The details of the interpretation of Framework U are analogous to interpretation of framework K: first a Boolean-valued model is constructed with values of atomic formulas in $\Pfin$, and extended to all formulas with values in the completion $\CoRO$. Then a forcing relation is defined from the Boolean-valued model.

This whole situation can be brought down to a countable size by a L{\"o}wenheim--Skolem theorem, exactly as in Lemma \ref{lemmakfgshrink}. Let $\CoRO'$ be the countable subalgebra of $\CoRO$ obtained by this construction.

\begin{theorem}
     Define a relation between the non-zero elements of $\CoRO'$ and $L$-formulas with parameters $\beta_1, \ldots, \beta_n$ from $F$ by 
    \[p\Vdash \varphi(\beta_1,\dots,\beta_k) \iff p\leq \bbl \varphi(\beta_1,\dots,\beta_k)\bbr.\]
    Then $\Vdash$ is a forcing in the sense of Definition~\ref{forcing} over the set $F$.
\end{theorem}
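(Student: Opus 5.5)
The plan is to follow the proof of Theorem~\ref{theoremkrajicekismuller} almost verbatim, in the one-sorted setting. First I set up the Boolean-valued model. For atomic $\varphi(\beta_1,\dots,\beta_k)$ with $\beta_i \in F$, let $\bbl\varphi\bbr$ be the class in $\Pfin$ of the definable set $\{i\in I; M\models \varphi(\beta_1(i),\dots,\beta_k(i))\}$. This set is definable because the $\beta_i$ are definable and $F$ is $L$-closed, so every closed term reduces to some element of $F$. The valuation is extended to all formulas by letting it commute with $\lnot,\land,\lor$, and by taking infima and suprema over $F$ in the completion $\CoRO$ for the quantifiers.

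Next I apply the L{\"o}wenheim--Skolem construction exactly as in Lemma~\ref{lemmakfgshrink}. Take the three-sorted structure with sorts $F$, $\CoRO$ and the order $\le$, together with the functions $\mathrm{val}_\varphi$, and pass to a countable elementary substructure. This yields a countable $F'$ and a countable subalgebra $\CoRO'$ in which $\bbl-\bbr'$ agrees with $\bbl-\bbr$ on sentences with parameters in $F'$. It also gives that all the relevant families $\{\bbl\varphi(\alpha)\bbr';\alpha\in F'\}$ have their suprema and infima in $\CoRO'$, and that the quantifier clauses hold with $F'$ in place of $F$. Since $M$ and $F$ are already countable, I would simply include all of $F$ in the substructure, so that $F'=F$. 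This matches the phrase ``over the set $F$'' in the statement; the only thing collapsed is the algebra. The substructure can be chosen to contain $\Pfin$, which is countable.

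With this in place I verify the clauses of Definition~\ref{forcing} for $P=\CoRO'\setminus\{0\}$ and $p\Vdash\varphi$ iff $p\le\bbl\varphi\bbr'$, in the same order as before. First comes the Claim: if $p\nVdash\varphi$ then $c=p\land\bbl\lnot\varphi\bbr'$ is nonzero, so $c\Vdash\lnot\varphi$. Extension follows from transitivity of $\le$. For Stability, let $D=\{d\le p; d\Vdash\varphi\}$. If $\Lor D<p$, then $p\land\lnot\Lor D$ is nonzero and has a nonzero extension in $D$, which is a contradiction. The negation rule then follows from the Claim together with Stability. The conjunction rule and the universal quantifier rule follow because the valuation commutes with $\land$ and with infima over $F'$. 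The disjunction and existential rules follow from the De Morgan identities.

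The main obstacle is Stability, which requires the supremum $\Lor D$ to exist in the countable algebra $\CoRO'$. The set $D$ is not obviously of the form $\{\bbl\varphi(\alpha)\bbr'\}$, so elementarity alone does not supply its supremum. I would sidestep the supremum entirely, using only $\bbl\varphi\bbr'$ itself. If $p\not\le\bbl\varphi\bbr'$, then $p\land\lnot\bbl\varphi\bbr'$ is a nonzero element of $\CoRO'$. By the hypothesis of Stability it has a nonzero extension $d\le\bbl\varphi\bbr'$. But $d$ also lies under $\lnot\bbl\varphi\bbr'$, which is a contradiction. This argument needs only that $\CoRO'$ is a Boolean subalgebra containing all the values $\bbl\varphi\bbr'$, which the elementary substructure guarantees.
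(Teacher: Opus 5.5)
Your proposal is correct and follows the paper's route. The paper simply refers back to the proof of Theorem~\ref{theoremkrajicekismuller}: a L{\"o}wenheim--Skolem collapse of the algebra, then $p\Vdash\varphi$ iff $p\le\bbl\varphi\bbr'$, then a clause-by-clause check. Keeping $F'=F$ is a sensible clarification of ``over the set $F$''.

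The Stability obstacle you flag is not real. $D$ is exactly the set of nonzero elements of $\CoRO'$ below $p\land\bbl\varphi\bbr'$, so $\Lor D=p\land\bbl\varphi\bbr'$ does lie in $\CoRO'$. The paper's element $p\land\lnot\Lor D$ is then precisely your $p\land\lnot\bbl\varphi\bbr'$, so your direct argument is the same proof with the supremum made explicit.
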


\begin{proof}
Analogous to the proof of Theorem \ref{theoremkrajicekismuller}.
\end{proof}

Once the restricted ultrapowers are interpreted in Framework AM using the forcing relation $\CoRO'$, one can observe that $F$-restricted ultrapowers $F/G$ via generic ultrafilters $G \sub \CoRO'$ are isomorphic to generic extensions $M[G]$.
    
\begin{theorem}\label{theoremassociateucultrapower}
Suppose $M, I$ and $F$ are as in Section \ref{F}. Then $F/G \cong F[G].$
\end{theorem}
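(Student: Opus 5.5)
The plan is to compare the two structures directly on the common index set $F$, which serves both as the set of names for $F[G]$ and as the set of representatives for $F/G$. Here $F[G]$ denotes the generic associate of Section \ref{sec:partially definable forcing}, built from the forcing of the previous theorem. First I have to make $F/G$ meaningful. A generic filter $G\subseteq \CoRO'\setminus\{\0\}$ meets the dense sets $\{p;\ p\le \bv{\varphi}\ \text{or}\ p\le \bv{\neg\varphi}\}$, so it decides every sentence. Its trace $U_G=\{A\in\Pfin;\ [A]\in G\}$, pulled back to $\tilde{\mathcal P}(I)$, is then an ultrafilter on the definable subsets of $I$. Since $G$ contains no finite-set classes, this ultrafilter is non-principal. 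I take $F/G$ to mean the restricted ultrapower $F/U_G$ of Definition \ref{def:restricted-ultrapower}.

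Next I unwind $\Th(G)$ on atomic sentences. For $\alpha,\beta\in F$, the sentence $\alpha=\beta$ belongs to $\Th(G)$ iff some $p\in G$ satisfies $p\le\bv{\alpha=\beta}$. Since $G$ is a filter, this holds iff $\bv{\alpha=\beta}\in G$, i.e. iff $\{i\in I;\ \alpha(i)=\beta(i)\}\in U_G$. Here I use that atomic values lie in $\Pfin\subseteq\CoRO'$. Hence $\sim_{\Th(G)}$ restricted to $F$ coincides with $\sim_{U_G}$, and the same computation works for every relation symbol $R$. By $L$-closedness of $F$, every closed term $f(\alpha_1,\dots,\alpha_k)$ equals the single element $i\mapsto f(\alpha_1(i),\dots,\alpha_k(i))$ of $F$, and its Boolean value of equality with that element is $\1$. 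This gives both preconditions: $F[G]$ is defined, since $\sim_{\Th(G)}$ is a congruence and every term is congruent to a name in $F$.

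I then define $e:[\alpha]_{\sim_{\Th(G)}}\mapsto[\alpha]_{U_G}$. By the previous step, $e$ is well defined and injective. It is surjective because both universes are indexed by $F$. It respects function symbols by the term computation above, and relation symbols by the atomic computation. So $e$ is an isomorphism of $L$-structures, which gives $F/G\cong F[G]$. If desired, one can add that when $F$ is full, the Truth Lemma (Theorem \ref{truth and completeness lemma}) together with Łoś (Theorem \ref{O:Los}) shows the two agree on all formulas. This extra remark is not needed, since an isomorphism already gives full agreement.

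The main obstacle is the bookkeeping caused by the Löwenheim--Skolem collapse to $\CoRO'$. I must ensure that $\CoRO'$ contains all of $\Pfin$, which is true because $\Pfin$ is countable and can be put into the elementary substructure. I must also ensure that the values $\bv{\alpha=\beta}$ are computed in $\Pfin$ rather than only in the completion, and that the sets used for genericity belong to the forcing algebra $\forcingAlgebra$. Finally, the non-principality and finite-set quotient matter only when identifying $U_G$ with an ultrafilter on $\tilde{\mathcal P}(I)$. Here I pull back along the quotient map; sets differing by a finite set are identified, which does not affect membership for a non-principal $U_G$.
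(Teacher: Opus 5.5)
Your proposal is correct and is essentially the paper's proof: you identify $\sim_{\Th(G)}$ on $F$ with the ultrapower equivalence via $(\alpha=\beta)\in\Th(G)\iff\bv{\alpha=\beta}\in G$, treat relation symbols the same way, use $L$-closedness of $F$ to reduce closed terms to elements of $F$, and map $[\alpha]_{\sim}\mapsto[\alpha]_{\sim_{\Th(G)}}$. Your extra bookkeeping (making $F/G$ precise via the trace of $G$ on $\Pfin$, and checking function symbols) fills in what the paper leaves implicit, with one small slip: deciding every sentence does not by itself make that trace an ultrafilter on all of $\Pfin$, because not every class $[A]_{\fin}$ is the Boolean value of a sentence, so you should instead use the parameter-definable dense sets $\{p;\ p\le[A]_{\fin}\ \text{or}\ p\perp[A]_{\fin}\}$ (this is harmless anyway, since the ultrapower depends only on the sets $\{i;\ \alpha(i)=\beta(i)\}$ and $\{i;\ R(\alpha_1(i),\dots,\alpha_k(i))\}$).
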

\begin{proof}
The universe of $F[G]$ consists of all closed $L$-terms with parameters from $F$ modulo the equivalence given by $\Th(G)$. Since $F$ is $L$-closed, then for every closed $L$-term $t$ there is $\alpha\in F$ such that $(t=\alpha)\in \Th(G)$, moreover for $\alpha,\beta\in F$, we have that $(\alpha=\beta)\in\Th(G)$ if and only if there is $p\in G$ such that $p\leq \bbl\alpha=\beta\bbr$ which is equivalent to $F/G\models ([\alpha]_\sim=[\beta]_\sim)$, thus there is a bijection between the universes of $F[G]$ and $F/G$. Similarly, for a relational symbol $R\in L$, we have that for $\alpha_1,\dots,\alpha_k\in F$: $R(\alpha_1,\dots,\alpha_k)\in \Th(G)$ if and only if there is $p\in G$ such that $p\leq \bbl R(\alpha_1,\dots,\alpha_k)\bbr$ if and only if $F/G\models R([\alpha_1]_\sim,\dots,[\alpha_k]_\sim)$.
Thus, there is an isomorphism of $F/G$ and $F[G]$ by mapping $[\alpha]_\sim\in F/G$ to the equivalence class of $\alpha$ in $F[G]$.
\end{proof}

\brm
Note that not all ultrafilters on $\CoRO'$ are generic, hence the restricted ultrapowers $F/U$ for ultrafilters $U$ on $\CoRO'$ form a strictly bigger family of structures than generic extensions.\footnote{If $\B$ is a Boolean algebra and $D \sub \B$ is a dense open set, then $\bigvee D = 1_\B$. We call $D$ \emph{unavoidable} if there are finitely many $b_1, \ldots, b_n$ from $D$ such that $b_1 \vee \cdots \vee b_n = 1_\B.$ Otherwise $D$ is called \emph{avoidable}. It is easy to see that every ultrafilter $U$ on $\B$ must meet every unavoidable dense open set $D$. If $D$ is avoidable, then the ideal $I_D$ generated by $D$ is proper (it does not contain $1_\B$). It follows that the dual filter $F_D$ is proper, and can be extended in an ultrafilter $U_D$ which avoids every element of $D$. Finally, note that there always exist avoidable dense open sets which are definable from the model $M$ and a Boolean algebra $\B$, and thus they are included in the definition of genericity in Framework AM (see Definition \ref{forcing algebra}). A canonical example for $\Pomega$ (where $\omega$ denotes natural numbers) is the dense open set of infinite density zero subsets of $\omega$, containing all equivalence classes $[X]$ such that $\lim_{n \to \infty} \frac{\vert{}X \cap n\vert{}}{n}$ equals to $0$.}
\erm

Also note that unless $F$ is full, the truth in the ultrapower $F/G$ is not computed by the Łoś theorem. If $F$ is full, then one can look for a reduction of truth in generic extensions $F/G \cong M[G]$ to Łoś theorem. We will not develop this correspondence in this section but we provide details for the \Krno's forcing in Theorems \ref{th:Rmax} and \ref{th:ultra}.

\brm
We chose to show that restricted ultrapowers are interpretable in framework AM by means of Boolean-valued models to accentuate the analogy with framework K. There is a more direct approach which uses just the algebra $\Pfin$ to define the forcing relation on the atomic formulas and extends it by recursion to all formulas, following Definition \ref{forcing}.  It is easy to see that the resulting forcing relations are the same. This follows by the following more general lemma:

Suppose $\B$ is a complete Boolean algebra and let $\B^+$ be the set of positive elements. Assume that $D \sub \B^+$ is a dense partial order in $\B^+$. 

Define the forcing relation for $p \in \B^+$ in the usual way: $$p \Vdash_{\B} \varphi \iff p \le \bv{\varphi}_\B.$$ Suppose $\Vdash^{\mathrm{At}}_D$ is a relation between elements of $D$ and atomic formulas such that  $\Vdash^{\mathrm{At}}_D$ and $\Vdash_\B$ coincide on the atomic formulas. Let  $\Vdash_D$ be the unique extensions to all formulas, defined by recursion following the recursive definition in Definition \ref{forcing}. Then the two forcing relations are the same:

\begin{lemma}
For all $\varphi$ and all $d \in D$:$$d \Vdash_D \varphi \iff d \Vdash_\B \varphi.$$
\end{lemma}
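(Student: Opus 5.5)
We argue by induction on the complexity of $\varphi$, following the clauses of the universal pre-forcing recurrence in Definition~\ref{forcing}. The atomic case is the hypothesis that $\Vdash^{\mathrm{At}}_D$ and $\Vdash_\B$ agree on atomic formulas. Throughout we use that $\Vdash_\B$, restricted to $D$, satisfies the same recurrence. For conjunction this is because $d\le \bv{\varphi}\wedge\bv{\psi}$ iff $d\le\bv{\varphi}$ and $d\le\bv{\psi}$. For the universal quantifier it is because $\bv{\forall x\chi(x)}=\bigwedge_a\bv{\chi(a)}$ and $d$ lies below an infimum iff it lies below each term. With this in hand, the cases for $\wedge$ and $\forall$ are immediate from the induction hypothesis. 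The cases for $\vee$ and $\exists$ are then formal, since both relations treat them as abbreviations ($\neg(\neg\varphi\wedge\neg\psi)$ and $\neg\forall x\neg\chi$), and we have $\bv{\varphi\vee\psi}=\bv{\neg(\neg\varphi\wedge\neg\psi)}$ and the analogous identity for $\exists$. So these reduce to the negation case.

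The only real step is negation. We must show, for $d\in D$:
\[
d\le \neg\bv{\varphi} \iff \text{for all } e\in D \text{ with } e\le d,\ e\not\le\bv{\varphi}.
\]
By the induction hypothesis the right-hand side is exactly $d\Vdash_D\neg\varphi$. This is where the density of $D$ in $\B^+$ is used; note that the $D$-recurrence quantifies only over extensions inside $D$.

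For the forward direction, if $d\le\neg\bv{\varphi}$ and $e\le d$ with $e\le\bv{\varphi}$, then $e\le \bv{\varphi}\wedge\neg\bv{\varphi}=0$. This contradicts $e\in\B^+$.

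For the backward direction, suppose $d\not\le\neg\bv{\varphi}$. Then $c=d\wedge\bv{\varphi}$ is a positive element of $\B$. By density of $D$ there is $e\in D$ with $e\le c$. This $e$ satisfies $e\le d$ and $e\le\bv{\varphi}$, so $e\Vdash_\B\varphi$, and by the induction hypothesis $e\Vdash_D\varphi$. This contradicts the right-hand side.

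One point to check is that the induction is well founded. The recursion defining $\Vdash_D$ refers to $\Vdash_D$ on the subformula $\varphi$ (and on instances $\chi(a)$) at other conditions. We therefore induct on formulas, with the statement quantified over all $d\in D$ simultaneously. The only genuine obstacle is the density argument in the negation case; the remaining cases are bookkeeping.
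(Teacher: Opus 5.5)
Your proof is correct. Like the paper, you argue by induction on formulas with density of $D$ in $\B^+$ as the key ingredient, but you put the weight on a different induction step.

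The paper writes out only the step for $\exists x\varphi$. It unfolds the recurrence ($\exists$ as $\neg\forall\neg$) into the statement that every $x\le d$ in $D$ has some $y\le x$ in $D$ and some $a\in M$ with $y\Vdash_D\varphi(a)$. It then applies the induction hypothesis and calls the equivalence with $d\le\bigvee_{a}\bv{\varphi(a)}$ a routine computation in complete Boolean algebras. That computation actually needs density of $D$ in both directions. It also needs the fact that a positive element below $\bigvee_a\bv{\varphi(a)}$ has nonzero meet with some $\bv{\varphi(a)}$.

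You instead observe that density enters only through the negation clause, the only clause that quantifies over extensions. You prove that clause once, and you get $\vee$ and $\exists$ for free from the identities $\bv{\varphi\vee\psi}=\bv{\neg(\neg\varphi\wedge\neg\psi)}$ and $\bigvee_a\bv{\chi(a)}=\neg\bigwedge_a\neg\bv{\chi(a)}$.

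Your version is more modular. It also explicitly covers the negation step, which the paper leaves implicit even though that is where density is genuinely used. The paper's version goes straight to the quantifier step, which mirrors how Boolean values of existential formulas are computed as suprema, but it leaves the Boolean verification to the reader.

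One minor point: $\neg(\neg\varphi\wedge\neg\psi)$ is not a subformula of $\varphi\vee\psi$. Your reduction should therefore be read as applying your $\neg$ and $\wedge$ (resp.\ $\forall$) steps to formulas built from the immediate subformulas. This is the same convention already needed to make the recursive definition of $\Vdash_D$ well founded.
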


\begin{proof}
Suppose by induction the equivalence holds for $\varphi$. We show it for $\exists x \varphi$. By the definitions of the respective forcing relations we have: $$d \Vdash_D \exists x \varphi \iff (\forall x \le d, x \in D)(\exists  y  \le x, y \in D)(\exists a \in M) \; y \Vdash_D \varphi(a), $$ and  $$d \Vdash_\B \exists x \varphi \iff d \le \bv{\exists x \varphi} = \bigvee \set{\bv{\varphi(a)}}{a \in M}.$$  By the induction assumption, 

$$d \Vdash_D \exists x \varphi  \iff (\forall x \le d, x \in D)(\exists  y  \le x, y \in D)(\exists a \in M) \; y \le \bv{\varphi(a)}.$$

It follows it suffices to show:

\begin{multline}
(\forall x \le d, x \in D)(\exists  y  \le x, y \in D)(\exists a \in M) \; y \le \bv{\varphi(a)} \iff \\ d \le  \bigvee \set{\bv{\varphi(a)}}{a \in M}.
\end{multline}
The equivalence is true by routine computations over complete Boolean algebras.
\end{proof}
\erm

\section{Generalizations to uncountable structures}\label{sec:4}

In this section, we look at models in Framework K in their original uncountable formulations and interpret their restriction to the number sort as restricted ultrapowers existing in set-theoretic generic extensions of transitive models of set theory. Besides saving the true uncountable character of the models and thus making the computations more natural (as compared with computations reduced to a countable case by a L{\"o}wenheim--Skolem argument), we obtain the additional information that the Boolean algebras corresponding to framework K are isomorphic to well-known random algebras in set theory. It follows that, at the moment, two types of complete Boolean algebras stand out as practically relevant for forcing in arithmetic: the completion of the Cohen forcing $\CoRO$, and the random algebras $\mathcal{B}_\kappa$ generated by the product measure spaces $2^\kappa$. 

The exposition in this section builds on the results in \cite{RH:bounded} by the first author; it adds a new description of Boolean-valued models for random variables as restricted ultrapowers via generic ultrafilters  (compare with Sections \ref{restricted} and \ref{sec:33} where we discussed ultrapowers via ultrafilters on countable sets). We provide a probabilistic interpretation of the forcing arguments which is closer to the methods (and terminology) of \cite{krajicek11} to prove some results on the properties of the linear orders in generic extensions of $\aleph_1$-saturated models of arithmetic. 

\brm
To simplify the exposition from the notational point of view, we will limit our attention to one-sorted theories of bounded arithmetic. This is sufficient for discussing the basic ideas in this section; with more effort the discussion can be extended to two-sorted theories. 
\erm

\subsection{Collapsing Boolean-valued models to two-valued models}\label{sec:41}

Even though Boolean-valued models are sufficient from the point of showing (relative) consistency of a theory $T$, it is sometimes useful to have a $\{0,1\}$-valued model of $T$. Let us fix a language $L$ of arithmetic, for concreteness a language appearing in \Krno's forcing in Section \ref{sec:32}.

A Boolean-valued model $\A$ in the language $L$ can be defined in many ways, but to behave well, it always needs to satisfy some minimal requirements to ensure that all axioms of the predicate logic are valid in $\A$ (i.e., their Boolean value is $1$). A general definition of evaluation of formulas in $\A$ (in the language of set theory) is in Jech \cite[(14.9)]{jech2006}. Once the Boolean evaluation of atomic formulas is defined, the recursive definition in \cite[(14.9)]{jech2006} extends it to all formulas.

\begin{definition}\label{def:boolean-valued}
Suppose $\A$ is a Boolean-valued model over a complete Boolean algebra $\B$. Suppose $U$ is an ultrafilter on $\B$.  Let $[a]_U$ be the equivalence class of $a \in \A$ defined by $[a]_U = \set{b \in \A}{\bv{a=b} \in U}$. The quotient $\AU$ has domain $\set{[a]_U}{a \in  \A}$ with relations $R$ defined by $$R([a_1]_U, \ldots, [a_n]_U]) \iff \bv{R(a_1, \ldots, a_n)} \in U.$$
\end{definition}

The quotient $\AU$ is a $\{0,1\}$-valued structure.

\begin{definition}\label{def:faith}
We say that the quotient $\AU$ is \emph{faithful} if for all sentences $\varphi$ in $L_{\A}$ ($L_{\A}$ is $L$ extended by constants for elements of $\A$):
\begin{equation}\label{U} \bv{\varphi(a_1,\ldots, a_n)} \in U \iff \AU \models \varphi([a_1]_U, \ldots [a_n]_U).\end{equation}
\end{definition}

There are two methods how to collapse $\A$ faithfully: one places requirements on $\A$ and the other on $U$.

\subsubsection{Full models}

\begin{definition}\label{full}
We say that a Boolean-valued model $\A$ is \emph{full} if for every formula $\varphi(x_0,\ldots, x_n)$ with free variables shown and all $a_1, \ldots, a_n \in \A$, there is some $a_0 \in \A$ such that
\begin{equation}\label{eq:full}
\bv{\exists x_0 \varphi(x_0, a_1, \ldots, a_n)} = \bv{\varphi(a_0, \ldots, a_n)}.
\end{equation}
\end{definition}

Since $\bv{\exists x_0 \varphi(x_0, a_1, \ldots, a_n)}$ is by definition equal to $$\bigvee_{a \in \A}\bv{\varphi(a,a_1, \ldots, a_n)},$$ fullness ensures that the supremum is attained by an element in $\A$.

Fullness ensures a faithful transformation of Boolean-valued models:

\begin{theorem}\label{th:U}
Suppose $\B$ is a complete Boolean algebra and $\A$ is a full Boolean-valued model in $\B$. Suppose $U$ is an ultrafilter on $\B$.  Then the quotient structure $\AU$ is a $\{0,1\}$-valued model satisfying for all sentences $\varphi$ in $L_{\A}$:
\begin{equation}\label{U1} \bv{\varphi(a_1,\ldots, a_n)} \in U \iff \AU \models \varphi([a_1]_U, \ldots [a_n]_U).\end{equation}
\end{theorem}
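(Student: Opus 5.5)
We argue by induction on the complexity of $\varphi$, essentially as in the proof of the Łoś theorem (Theorem~\ref{O:Los}), with fullness playing the role that choice of witnesses plays there. First we check that $\AU$ is well defined. The relation $a \sim_U b \iff \bv{a=b}\in U$ is an equivalence relation, because the equality axioms are valid in $\A$: $\bv{a=a}=1_\B$, $\bv{a=b}=\bv{b=a}$, and $\bv{a=b}\wedge\bv{b=c}\le\bv{a=c}$. Since $U$ is a filter, these inequalities pass to membership in $U$. Likewise the validity of the substitution axioms, $\bv{R(\bar a)}\wedge\bigwedge_i\bv{a_i=b_i}\le\bv{R(\bar b)}$ (and the analogous axiom for function symbols), shows that the relations of Definition~\ref{def:boolean-valued} do not depend on the choice of representatives. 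This is the standing assumption that all axioms of predicate logic have value $1$ in $\A$ (Jech's (14.9)).

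For the inductive argument, note that atomic formulas satisfy (\ref{U1}) by the definition of $\AU$. If closed terms are present, we first reduce them to elements of $\A$, using the validity of the function axioms to obtain $a$ with $\bv{t=a}=1_\B$. For the propositional steps, $\bv{\neg\varphi}=\neg\bv{\varphi}$ and $U$ is an ultrafilter, so exactly one of $\bv{\varphi}$, $\neg\bv\varphi$ lies in $U$. This gives $\bv{\neg\varphi}\in U\iff \bv\varphi\notin U\iff \AU\not\models\varphi\iff\AU\models\neg\varphi$. Similarly, $\bv{\varphi\wedge\psi}=\bv\varphi\wedge\bv\psi$ lies in $U$ iff both conjuncts do, since $U$ is a filter. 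Disjunction and $\forall$ then follow by the usual dualities, $\bv{\forall x\varphi}=\neg\bv{\exists x\neg\varphi}$.

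The only nontrivial step is the existential quantifier, where we use fullness (Definition~\ref{full}). Suppose first that $\AU\models\exists x_0\,\varphi(x_0,[a_1]_U,\dots)$. Then some $[a_0]_U$ witnesses it, so by the induction hypothesis $\bv{\varphi(a_0,\bar a)}\in U$. Since $\bv{\varphi(a_0,\bar a)}\le\bigvee_{a}\bv{\varphi(a,\bar a)}=\bv{\exists x_0\varphi}$ and $U$ is upward closed, we get $\bv{\exists x_0\varphi}\in U$. Conversely, suppose $\bv{\exists x_0\varphi(x_0,\bar a)}\in U$. Without fullness this would be the obstacle, because an ultrafilter may contain a supremum without containing any of the joinands; this is exactly why faithfulness can fail in general. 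Fullness supplies $a_0\in\A$ with $\bv{\varphi(a_0,\bar a)}=\bv{\exists x_0\varphi(x_0,\bar a)}\in U$. The induction hypothesis then gives $\AU\models\varphi([a_0]_U,[\bar a]_U)$, and hence $\AU\models\exists x_0\varphi$.

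This completes the induction, and (\ref{U1}) holds for all sentences of $L_\A$. We expect the only delicate bookkeeping to be the well-definedness check and the remark that the induction hypothesis must be applied to formulas with arbitrary parameters from $\A$, not just to sentences built from the original $\bar a$. Both points are routine once the logical axioms are known to be valid in $\A$.
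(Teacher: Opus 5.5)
Your proposal is correct and follows the same route as the paper: induction on the complexity of $\varphi$, with the ultrafilter properties handling the propositional steps and fullness supplying a single witness $b$ with $\bv{\exists x\,\varphi}=\bv{\varphi(b)}$ in the existential step. The paper only sketches this step and defers the remaining routine details, including the well-definedness check you spell out, to Jech's Lemma 14.14.
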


\begin{proof}
One can prove the theorem by induction on the complexity of $\varphi$. The existential quantifier step goes through by fullness:
\begin{equation}
\bv{\exists x \varphi} = \bigvee_{a \in \A}\bv{\varphi(a)} = \bv{\varphi(b)}, \mbox{ for some $b \in \A$},
\end{equation}
hence
\begin{equation}
\bv{\exists x \varphi} \in U \Then \bv{\varphi(b)} \in U \iff \AU \models \varphi([b]_U).
\end{equation}
See Jech \cite[Lemma 14.14]{jech2006} for a complete proof. 
\end{proof}

\textbf{Example 1.} An important example of a full Boolean-valued model is the class $V^{\B}$ of forcing names, where $\B$ a complete Boolean algebra is viewed as a forcing notion (see Jech \cite[Lemma 14.19]{jech2006}). We therefore have the following corollary (where $\in_U$ is the realization of the membership in $V^{\B}/U$ defined by $[a]_U \in_U [b]_U \iff \bv{a \in b} \in U$):

\begin{corollary}
Suppose $V^{\B}$ is a Boolean-valued forcing extension in set theory and $U$ is an ultrafilter on $\B$. Then the quotient $V^{\B}/U$ satisfies $$\bv{\varphi(a_1,\ldots, a_n)} \in U \iff (V^{\B}/U, \in_U) \models \varphi([a_1]_U, \ldots [a_n]_U).$$
\end{corollary}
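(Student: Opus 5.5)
The corollary follows by applying Theorem~\ref{th:U} to the Boolean-valued model $\A = V^{\B}$ in the language $\{\in\}$. Equality is treated as in Jech's definition (14.9), which fixes the Boolean values $\bv{a=b}$ and $\bv{a\in b}$ for names $a,b\in V^{\B}$. Two hypotheses of Theorem~\ref{th:U} must be checked. First, $\B$ is a complete Boolean algebra, which we assume. Second, $V^{\B}$ is full in the sense of Definition~\ref{full}.

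\textbf{Fullness.} This is supplied by Jech \cite[Lemma 14.19]{jech2006}, the Maximal Principle. For any formula $\varphi(x_0,a_1,\dots,a_n)$ there is a name $a_0\in V^{\B}$ with
\[\bv{\exists x_0\,\varphi(x_0,a_1,\ldots,a_n)}=\bv{\varphi(a_0,a_1,\ldots,a_n)}.\]
The proof of that lemma is the only step with real content, and it is where I expect the main work to lie. One takes $b=\bigvee_{a}\bv{\varphi(a,\bar a)}$. Since $V^{\B}$ is a proper class, one first uses a reflection argument to find a set-sized family of names witnessing this supremum. One then refines it, using the axiom of choice, to an antichain $\{b_i\}_{i\in I}$ with witnesses $c_i$ such that $b_i\le\bv{\varphi(c_i,\bar a)}$ and $\bigvee_i b_i=b$. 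Finally, the Mixing Lemma glues these into a single name $a_0$ with $b_i\le\bv{a_0=c_i}$ for all $i$, and hence $\bv{\varphi(a_0,\bar a)}\ge b$. Completeness of $\B$ is used both to form the supremum and to carry out the mixing. Since the paper allows citing earlier results and the text invokes Jech directly, I would cite this lemma rather than reprove it.

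\textbf{Conclusion.} Next I check that the quotient of Definition~\ref{def:boolean-valued} matches the notation in the corollary. In $V^{\B}/U$ the equivalence classes are $[a]_U=\set{b}{\bv{a=b}\in U}$, and the relation $\in$ is interpreted by
\[[a]_U\in_U[b]_U \iff \bv{a\in b}\in U.\]
To see this is well defined, note that if $\bv{a=a'}$ and $\bv{b=b'}$ are in $U$, then $\bv{a\in b}\wedge\bv{a=a'}\wedge\bv{b=b'}\le\bv{a'\in b'}$, which holds because the equality axioms have Boolean value $1$. Applying Theorem~\ref{th:U} then gives, for every sentence $\varphi$ with parameters,
\[\bv{\varphi(a_1,\ldots,a_n)}\in U \iff (V^{\B}/U,\in_U)\models\varphi([a_1]_U,\ldots,[a_n]_U).\]

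One formal caveat remains. Because $V^{\B}$ is a class, the supremum in the existential clause ranges over a class, and Theorem~\ref{th:U} was phrased for a set-like $\A$. However, its inductive proof uses fullness only at the existential step, and fullness is exactly what the Maximal Principle provides. So the same induction goes through verbatim for $V^{\B}$.
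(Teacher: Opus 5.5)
Your proposal is correct and matches the paper's argument: the paper derives the corollary by noting that $V^{\B}$ is full (citing Jech's Maximal Principle, Lemma 14.19) and then applying Theorem~\ref{th:U}. The rest of your write-up is correct and goes beyond what the paper states explicitly: the sketch of the Maximal Principle via an antichain and the Mixing Lemma, the well-definedness check for $\in_U$, and the remark about the supremum over a class-sized model.
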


If $U$ is an ultrafilter in the universe $V$ over some non-atomic Boolean algebra $\B$, then $(V^{\B}/U, \in_U)$ is necessarily an \emph{ill-founded} model of set theory in $V$: if the binary relation $\in_U$ were well-founded, the Mostowski transitive collapse of $(V^{\B}/U, \in_U)$ would be a proper extension of $V$.

\medskip

\textbf{Example 2.} Another example of a full model is the is the Boolean-valued model from Section \ref{sec:32} constructed with the set $\XiM$ of  all random variables (see Theorem \ref{th:Rmax}). As we will discuss in Section \ref{sec:full} below in more detail, models for families of random variables $\Xi$ with restricted complexity are full only for certain classes of bounded formulas (``restricted fullness''). This is essential for obtaining non-trivial independence results because full fullness implies that the respective model is an elementary extension of the model of true arithmetic $\M$ fixed at the beginning (and thus has the same theory); see Theorem \ref{th:ultra}. Compare with Section \ref{restricted} where we discussed these concepts in the countable setting.

\subsubsection{Model-complete and generic ultrafilters}

Suppose $\A$ is a Boolean-valued model with respect to a complete Boolean algebra $\B$ that is not full according to Definition \ref{full}. One can still transform $\A$ faithfully into a $\{0,1\}$-valued model by using an ultrafilter $U$ with some additional properties.

\begin{definition}\label{def:complete}
An ultrafilter $U$ is called \emph{$\A$-complete} if for all formulas $\exists x \varphi(x,a_1, \ldots, a_n)$,
$$\bigvee_{a \in \A} \bv{\varphi(a)} \in U \iff \exists b \in \A,  \bv{\varphi(b)} \in U.$$
\end{definition}

$\A$-complete ultrafilters exist in $V$ if $\A$ is countable: then the number of formulas $\psi$ of the form $\exists x \varphi(x, a_1, \ldots, a_n)$ is also countable, and for each one there is a set $A_{\psi} \sub \A$ such that
\begin{equation}
\label{max}
\bigvee_{a \in \A} \bv{\varphi(a)} = \bigvee_{a \in A_{\psi}} \bv{\varphi(a)}
\end{equation}
 and the family $B_\psi = \set{\bv{\varphi(a)}}{a \in A_{\psi}}$ is a maximal antichain in $\B$. By Rasiowa--Sikorski lemma (equivalently, Baire category theorem), there is an ultrafilter $U \sub B$ which meets every such maximal antichain in $\B$.

\begin{definition}\label{def:generic}
If $\A$ is a forcing model $V^{\B}$, or if $\A$ is analyzed from the point of forcing, an $\A$-complete ultrafilter $U$ is usually called \emph{generic}. We will denote generic ultrafilters by $U_g$.\footnote{It usual to use the letter $G$ but since this might be confused with the set of random functionals in Section \ref{sec:32}, we will use a more descriptive notation $U_g$.}
\end{definition}

If $\A$ is uncountable, a $\{0,1\}$-valued faithful transformation may not exist. However, in applications, the usual set of tricks is used in forcing to have $\A$-complete ultrafilters: most frequently, one starts with a countable transitive model $(V,\in)$ of set theory and defines $\A \in V$ inside $V$. Since $V$ is countable, Rasiowa--Sikorsky lemma ensures the existence an $\A$-complete ultrafilter (in the ambient universe). The application of this method to uncountable Boolean-valued models of arithmetic is described in \cite{RH:bounded}.

\brm
If $U_g$ is a generic ultrafilter for a Boolean-valued model $V^{\B}$, then its completeness ensures that the quotient structure $(V^{\B}/U_g, \in_{U_g})$ is well-founded, and hence it is isomorphic to a transitive model usually denoted $(V[U_g],\in)$. $V[U_g]$ has a more constructive description in which the interpretation $\sigma^{U_g}$ of names $\sigma \in V^{\B}$ is defined by recursion: $\sigma^{U_g}$ is the transitive collapse of the equivalence class $[\sigma]_{U_g}$ (see Jech \cite[Definition 14.27]{jech2006}).
\erm

\subsection{Uncountable structures in Framework K and generic ultrafilters}\label{sec:42}

Suppose $\M$ is a countably saturated model of true arithmetic and $\Omega \in \M$ is a non-standard number. Suppose $\Xi$ is a collection of random variables $\alpha: \Omega \to \M$, $\alpha \in \M$, used to define a Boolean-valued model in Section \ref{sec:32} (the domain is composed of functions in $\Xi$ and the Boolean evaluation of formulas is defined recursively from atomic formulas; see  Definition \ref{def:Boolean}). We will denote the model by $\Km$ using the calligraphic type to emphasize it is a Boolean-valued model (this is the same model as  $K(F,G)$ in Section \ref{sec:32} restricted to the first-sort $F$).

\subsubsection{The maximal model $\Kmax$}\label{sec:full}

In this section we observe that $\Kmax$ is a full Boolean-valued model. While this is not explicit in~\cite{krajicek11} a similar idea appears in the proof of \cite[Lemma 5.5.1]{krajicek11}.

\begin{theorem}\label{th:Rmax}
The Boolean-valued model $\Kmax$ is full.
\end{theorem}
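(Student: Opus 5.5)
The plan is to show that the supremum defining $\bv{\exists x\,\varphi(x,\overline\alpha)}$ is attained by a single random variable, obtained by ``gluing'' countably many witnesses inside $\M$. I recall that $\XiM$ consists of all random variables $\alpha:\Omega\to\M$ coded in $\M$, with undefinedness on an infinitesimal set. Fix a formula $\varphi(x_0,\overline x)$ and parameters $\alpha_1,\dots,\alpha_n\in\XiM$, and put $b=\bv{\exists x_0\varphi(x_0,\overline\alpha)}=\bigvee_{\beta\in\XiM}\bv{\varphi(\beta,\overline\alpha)}$.

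\textbf{Step 1: countable reduction.} By Claim III in the proof of the Key Lemma (ccc plus the countable-supremum property), there are countably many $\beta_k\in\XiM$, $k\in\N$, with $b=\bigvee_k\bv{\varphi(\beta_k,\overline\alpha)}$. Since $\mu$ is $\sigma$-additive (Claim I), we have $\mu(b)=\lim_k\mu\bigl(\bigvee_{i<k}\bv{\varphi(\beta_i,\overline\alpha)}\bigr)$.

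\textbf{Step 2: gluing finitely many witnesses.} For a finite stage $k$, I would define $\gamma_k(\omega)=\beta_i(\omega)$ for the least $i<k$ with $\omega\in E_i$, and $\gamma_k(\omega)=\beta_0(\omega)$ otherwise. Here $E_i$ is a representative in $\A$ of $\bv{\varphi(\beta_i,\overline\alpha)}$. The function $\gamma_k$ is coded in $\M$, so $\gamma_k\in\XiM$. The key fact needed is a locality property: if $\gamma$ and $\beta$ agree on a set representing $c\in\B$, then $c\wedge\bv{\varphi(\gamma,\overline\alpha)}=c\wedge\bv{\varphi(\beta,\overline\alpha)}$. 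For formulas this is proved by induction on $\varphi$. The quantifier step uses that, for any $\delta\in\XiM$ and any $\delta'\in\XiM$, the variable that is $\delta$ on one coded set and $\delta'$ off it again lies in $\XiM$; this is exactly where maximality of $\XiM$ is used. Applying locality gives $\bv{\varphi(\gamma_k,\overline\alpha)}\ge\bigvee_{i<k}\bv{\varphi(\beta_i,\overline\alpha)}$.

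\textbf{Step 3: passing to the limit via saturation.} This step is the main obstacle. By property (1) of $\M$, I extend the sequences $(\beta_k)$ and $(E_k)$ to coded sequences of nonstandard length $t$. I would then consider the definable condition on $s$ that, for all $i\le s$, the glued set $\bigcup_{j<s}E_j$ has size within $1/i$ of the limit, in the manner of Claim I. Overspill then yields a nonstandard $s_0$ such that the glued $\gamma=\gamma_{s_0}$, defined in $\M$ by the same least-index recipe, lies in $\XiM$. The delicate point is that the $E_j$ with nonstandard $j$ are not known to represent $\bv{\varphi(\beta_j,\overline\alpha)}$, so $\gamma$ is only guaranteed to be correct on $E_j$ for standard $j$. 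This suffices: by locality, for every standard $k$ we get $\bv{\varphi(\gamma,\overline\alpha)}\ge\bv{\varphi(\beta_k,\overline\alpha)}$, because $\gamma$ agrees with $\beta_k$ on $E_k\setminus\bigcup_{j<k}E_j$. Taking the supremum over $k$ gives $\bv{\varphi(\gamma,\overline\alpha)}\ge b$, and the reverse inequality is trivial. In fact this argument only needs, for each standard $k$, that $\gamma$ equals $\beta_k$ on $E_k$ minus earlier sets, so the overspill is used just to obtain a coded gluing. Once $\gamma$ is coded in $\M$, it automatically lies in $\XiM$, with undefinedness controlled by taking the glued default value to be defined wherever $\beta_0$ is defined.
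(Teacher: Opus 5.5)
Your argument is correct in substance, but it takes a different route from the paper. The paper proves, by induction on $\varphi$, the pointwise identity
\[
\bv{\varphi(\alpha_1,\dots,\alpha_n)}=\set{\omega\in\Omega}{\M\models\varphi(\alpha_1(\omega),\dots,\alpha_n(\omega))}/\Ic .
\]
In the existential step it uses the internal least-witness function: $\gamma(\omega)$ is the $\M$-least $m$ with $\M\models\varphi(m,\alpha(\omega))$, and $0$ if there is none. By the induction hypothesis, the internal set $S$ of those $\omega$ that have a witness represents an upper bound of all $\bv{\varphi(\beta,\alpha)}$. At the same time $\bv{\varphi(\gamma,\alpha)}=S/\Ic$, so the supremum is exhibited as a maximum. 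This uses neither the ccc nor property (1) of $\aleph_1$-saturation, and the pointwise identity obtained along the way is exactly what is reused for the \L{}o\'s theorem (Theorem~\ref{th:ultra}).

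Your proof is instead a mixing argument of the kind used to show that $V^{\B}$ is full. You reduce the supremum to countably many witnesses by the ccc, prove a locality lemma, and glue the witnesses along a coded sequence supplied by property (1). This uses more of $\M$ and $\B$ and does not deliver the pointwise identity for free. In exchange, it mirrors the set-theoretic mixing lemma, and it locates the source of fullness in closure of $\XiM$ under gluing along coded sequences, where the paper locates it in the availability of internally defined least witnesses.

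Three details need adjusting.
\begin{itemize}
\item Locality does not need maximality. In the quantifier step, the induction hypothesis for each $\delta\in\XiM$, together with the distributive law $c\wedge\bigvee_{\delta}x_\delta=\bigvee_\delta(c\wedge x_\delta)$ in the complete algebra $\B$, already gives $c\wedge\bv{\exists y\,\psi(y,\gamma,\overline\alpha)}=c\wedge\bv{\exists y\,\psi(y,\beta,\overline\alpha)}$. Maximality is used only to place $\gamma_k$ and $\gamma$ in $\XiM$.
\item The overspill condition ``within $1/i$ of the limit'' is not definable in $\M$, since the limit is a real number. As you note yourself, it is superfluous: gluing along the whole coded sequence of length $t$ already works.
\item For nonstandard $j$, the entry $\beta_j$ need not be a function on $\Omega$ with infinitesimal undefined part. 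The nonstandard pieces $E_j\setminus\bigcup_{i<j}E_i$ can also have non-infinitesimal total measure. So defaulting to $\beta_0$ off $\bigcup_{j<t}E_j$ does not control the undefinedness of $\gamma$. Make $\gamma$ total instead, setting $\gamma(\omega)=0$ whenever the recipe produces no value. On each standard piece, $\gamma$ then differs from $\beta_k$ only inside the infinitesimal undefined set of $\beta_k$, which is invisible modulo $\Ic$. The final step $\bv{\varphi(\gamma,\overline\alpha)}\ge\bigvee_{k\in\N}[E_k\setminus\bigcup_{j<k}E_j]_{\Ic}=b$ then goes through unchanged.
\end{itemize}
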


\begin{proof}
To show fullness, we prove by induction on the complexity of formulas that for all $\alpha_1, \ldots, \alpha_n \in \XiM$, and every formula $\varphi$, 

\begin{equation}\label{more}
\bv{\varphi(\alpha_1, \ldots, \alpha_n)} = \set{\omega \in \Omega}{\M \models \varphi(\alpha_1(\omega), \ldots, \alpha_n(\omega))}/\Ic.
\end{equation}

The identity (\ref{more}) will be useful in defining an existential witness for fullness.

Clearly,  (\ref{more}) is true for atomic formulas and for the negation and the conjunction step. We need to verify it holds for the existential step. Let us write only $\alpha$ instead of $\alpha_1, \ldots, \alpha_n$ in the formula $\varphi$ for simplicity of notation. 
By the induction hypothesis, assume 
\begin{equation}\label{ind}
\bv{\varphi(\beta,\alpha)} = \set{\omega \in \Omega}{\M \models \varphi(\beta(\omega), \alpha(\omega))}/\Ic,
\end{equation} for every $\alpha, \beta \in \XiM$.  To show fullness for $\exists x \varphi(x,\alpha)$, we need to find $\gamma \in \XiM$ such that 
$$\bv{\exists x \varphi(x, \alpha)} = \bv{\varphi(\gamma,\alpha)}.$$
By the definition, $$\bv{\exists x \varphi(x, \alpha)} = \bigvee_{\beta \in \XiM}\bv{\varphi(\beta,\alpha)}.$$
The internal set $$S = \set{\omega \in \Omega}{\M\models \exists x \varphi(x, \alpha(\omega))}$$ in $\M$ is modulo $\Ic$ an upper bound of $\set{\bv{\varphi(\beta,\alpha)}}{\beta \in \XiM}$ by (\ref{ind}). Define $\gamma$ internally in $\M$ so that $\gamma(\omega)$ is the $\M$-least integer $m \in \M$ such that $\M\models \varphi(m, \alpha(\omega))$ (if there is one; if not let $m = 0$). Clearly, $$\bv{\varphi(\gamma,\alpha)} = \set{\omega \in \Omega}{\M \models \varphi(\gamma(\omega),\alpha(\omega)}/\Ic = S/\Ic = \bv{\exists x \varphi(x, \alpha)}.$$
This shows that the induction works for $\exists x \varphi(x,\alpha)$, and also ensures fullness for this existential formula.
\end{proof}

If $\Xi$ is not maximal and is restricted by some complexity requirement, the internal witnessing described in the proof of Theorem \ref{th:Rmax} can still be done, but the resulting witness $\gamma \in \XiM$ may fail to have the desired complexity to be in $\Xi$ as well. 

Krajíček \cite{krajicek11} develops methods to determine a \emph{feasible} or \emph{stratified} fullness guaranteeing existential witnesses for specific bounded quantifier classes that match the complexity of $\Xi$. These include decision trees and switching lemmas in which random variables are represented via small rudimentary decision trees or bounded-depth circuits. For formulas $\varphi(x)$ within the targeted syntactic fragment of bounded arithmetic (such as open or $\Sigma^{1,b}_0$-formulas), Krajíček uses the PHP-switching lemmas and reflection principles to prove that the truth-value of $\varphi(x)$ can be partitioned by low-complexity decision trees. Because the partition boundaries and the local witnesses both remain within the restricted complexity class, one can glue them together without jumping out of $\Xi$. In this sense, these models satisfy \emph{restricted fullness}: if $\varphi(x)$ is a formula of appropriate bounded complexity, there exists a random variable $\alpha \in \Xi$ that witnesses $\bv{\exists x \varphi(x)}$, i.e., $\bv{\exists x \varphi(x)} = \bv{\varphi(\alpha)}$.

A faithful transformation from Theorem \ref{th:U} can still be achieved for these restricted models  if a generic ultrafilter $U$ is used.

\subsubsection{A generic ultrapower $\UltMMaxG$ and its submodels}\label{sec:ultrapower}

In order to have faithful transformation as in Theorem \ref{th:U} for all Boolean-valued models in \cite{krajicek11}, we formulated an interpretation of Krajicek's forcing in  \cite{RH:bounded} in a generic extension $V[U_g]$ of a countable transitive model of set theory, where $U_g$ is a generic filter for the complete Boolean algebra $\B$ from Section \ref{sec:32}.  

Identifying $U_g$ with an ultrafilter on $\Omega$ for ease of notation, it is immediate by the definitions that quotient $\UKmaxG$ defined by means of the maximal family of random variable $\XiM$ is actually an ultrapower, which we will denote $\UltMMaxG$, of the starting true model of arithmetic $\M$. All other models $\UKmG$ are its submodels, in fact restricted Boolean ultrapowers $\UltMMG$ themselves according to Definition \ref{def:ultrap}. Note that $\UltMMG$ are not elementary submodels of $\UltMMaxG$ because Łoś theorem does not hold for all formulas as evaluated in the structure $\UltMMG$ (see Remark \ref{rm:complete}).\footnote{It is not difficult to define a two-sorted (``second-order'') generalization of the ultrapower construction. Note that while set theory is one-sorted, the powerset operation makes the theory able to work with many-sorted concepts. In particular, the generic ultrapower construction in set theory over the whole universe $V$ (see \cite[Section 22]{jech2006})  can be seen as the ultimate ultrapower construction for (transfinitely) many-sorted theories, and thus generalizes the ultrapower construction in Section \ref{sec:ultrapower} to more sorts.}

The Boolean ultrapower is a special form of a Boolean quotient caused by the fact that the domain of $\UKmaxG$ is composed of functions with domain $\Omega$, and the Boolean evaluation of atomic formulas is defined by a coordinate-wise validity on $\Omega$. 

This observation highlights the apparent similarity of the number sort of Krajíček's structures (denoted just $K(F)$ in~\cite{krajicek11}) and the restricted ultrapowers, they are essentially the same construction but over a different Boolean algebra.

The following definition is a specific example of restricted ultrapowers already discussed in Section \ref{F}. Note that if $U$ is an ultrafilter on $\B$ we can naturally identify it with an ultrafilter on the internal subsets of $\Omega$ (it will be clear from the context which ultrafilter we use).\footnote{The ultrafilter contains the internal subsets $A$ of $\Omega$ whose equivalence classes $[A]_\Ic$ are in $U$.}

\begin{definition}\label{def:ultrap}
Suppose $\Xi$ is a family of random variables and $U$ is an ultrafilter over $\B$. We say that $\UltMM$ is \emph{an ($\Xi$-restricted) Boolean ultrapower of $M$ by the ultrafilter on $\B$} if:
\bce[(i)]
\item
The domain of $\UltMM$ is composed of equivalence classes $[\alpha]_U$, where $\alpha \in \Xi$ and $[\alpha]_U = [\beta]_U$ for $\alpha,\beta \in \Xi$ if $\set{\omega \in \Omega}{\alpha(\omega) = \beta(\omega)} \in U$.
\item
The interpretation of relations and functions is defined naturally.
\ece
\end{definition}

\brm
Notice that $\UltMMG$ is the model $\Mg$ defined in \cite[Section 5.3]{RH:bounded}.
\erm

We now observe that the fullness of $\Kmax$ proved in Theorem \ref{th:Rmax} readily implies that the Łoś theorem applies to $\UltMMax$, and hence $\UltMMax$ is an elementary extension of the model $\M$. Note that this does not require that $U$ be generic because the model $\Kmax$ is full. Let $U$ be an ultrafilter on the Boolean algebra $\B$ in what follows.

\begin{theorem}\label{th:ultra}
Łoś theorem holds for $\UltMMax$: for every formula $\varphi$ and $\alpha_1,\ldots,\alpha_n \in \XiM$, the following holds: \begin{multline}\UltMMax \models \varphi[[\alpha_1]_U, \ldots, [\alpha_n]_U] \iff \\ 
\bv{\varphi(a_1, \ldots, a_n} \in U \iff
\\ \set{\omega \in \Omega}{\M \models \varphi(\alpha_1(\omega), \ldots, \alpha_n(\omega))} /\Ic \in U.\end{multline}

In particular the canonical embedding $$j: \M \to \UltMMax$$ is elementary.
\end{theorem}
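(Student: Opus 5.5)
The plan is to combine two earlier results: Theorem \ref{th:Rmax}, together with the identity (\ref{more}) established in its proof, and the faithful collapse of full Boolean-valued models, Theorem \ref{th:U}. The statement is a chain of two equivalences, and I would prove them separately before deducing elementarity of $j$.

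\textbf{Second equivalence.} This is exactly the identity (\ref{more}). For every formula $\varphi$ and every $\alpha_1,\ldots,\alpha_n \in \XiM$ it gives
\[\bv{\varphi(\alpha_1,\ldots,\alpha_n)} = \set{\omega\in\Omega}{\M\models\varphi(\alpha_1(\omega),\ldots,\alpha_n(\omega))}/\Ic .\]
So membership of one side in $U$ is literally membership of the other. The identification of $U$ with an ultrafilter on internal subsets of $\Omega$ is harmless here, since it is defined exactly by passing to $\Ic$-classes.

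\textbf{First equivalence.} Theorem \ref{th:Rmax} says that $\Kmax$ is full, so Theorem \ref{th:U} applies to the quotient $\Kmax/U$ for an arbitrary ultrafilter $U$; genericity is not needed. It remains to identify $\Kmax/U$ with $\UltMMax$ of Definition \ref{def:ultrap}.
\begin{itemize}
\item The elements match. By Definition \ref{def:Boolean}, $\bv{\alpha=\beta}$ is the class of $\set{\omega}{\alpha(\omega)=\beta(\omega)}$, because undefinedness sits on an $\Ic$-null set. Hence $[\alpha]_U$ in the Boolean sense coincides with $[\alpha]_U$ in Definition \ref{def:ultrap}.
\item The relations match. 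Relations are interpreted by $\bv{R(\bar\alpha)}\in U$, which is coordinatewise truth on a set in $U$.
\item The functions match. $L$-closedness of $\XiM$ places $f(\bar\alpha)$, computed pointwise, in $\XiM$.
\end{itemize}
So the two structures are literally the same, and Theorem \ref{th:U} gives the first equivalence.

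\textbf{Elementarity of $j$.} Define $j(m)=[c_m]_U$, where $c_m\in\XiM$ is the constant random variable with value $m$; it lies in $\M$ because it is definable from $m$ and $\Omega$. Apply the chain to $\varphi(c_{m_1},\ldots,c_{m_n})$. The internal set of $\omega$ where $\M\models\varphi(m_1,\ldots,m_n)$ is either all of $\Omega$ or empty, so its class is $\1$ or $\0$. Therefore $\UltMMax\models\varphi(j(m_1),\ldots,j(m_n))$ if and only if $\M\models\varphi(m_1,\ldots,m_n)$.

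The only real work is the existential step of (\ref{more}), namely the least-witness construction of $\gamma$. That step was already carried out in the proof of Theorem \ref{th:Rmax}, so here I expect the main obstacle to be only the bookkeeping needed to check that the Boolean quotient and the ultrapower are the same structure.
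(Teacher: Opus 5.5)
Your proposal is correct and is essentially the paper's argument. The paper proves the statement by induction on formulas, with the existential step handled by fullness (Theorem \ref{th:Rmax}) and the identity (\ref{more}); invoking Theorem \ref{th:U} together with (\ref{more}), as you do, packages that same induction, and your checks that $\Kmax/U$ coincides with $\UltMMax$ and that $j$ (via constant random variables) is elementary fill in details the paper leaves implicit.
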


\begin{proof}
The proof is by induction on the complexity of formulas. The existential step is a consequence of fullness of the model and the identity (\ref{more}). \end{proof}

Thus obtaining consistency results in bounded arithmetic via Boolean-valued models in \cite{krajicek11}, it is essential to use families of random variables $\Xi$ which are not closed under the construction of existential witnesses in the \Los theorem. This ensures that $\UKm$ is a submodel of $\UKmax$, but not an elementary one.\footnote{An analogous claim can be made if the Boolean-valued models $\Km$ are not collapsed to $\{0,1\}$-valued models; the difference is just terminological.}

\brm\label{rm:complete}
If the ultrafilter $U$ is not generic (i.e., not $\Km$-complete),  the restricted Boolean ultrapowers $\UKm = \UltMM$ are still submodels of $\UKmax = \UltMMax$, but they may fail to have the property \begin{equation}\label{UR} \bv{\varphi(a_1,\ldots, a_n)} \in U \iff \UKm \models \varphi([a_1]_U, \ldots [a_n]_U).\end{equation} for formulas of higher complexity. The collapse will thus not be faithful for all formulas according to Definition \ref{def:faith}. While unfaithfulness is mostly a cosmetic drawback---because the Boolean values of more complex formulas in Krajíček \cite{krajicek11} are not computed and are not interesting for consistency claims---having $U$ generic over a transitive model $V$ of set theory does provide a more tractable $\{0,1\}$-valued interpretation of all Boolean-valued models $\Km$.
\erm

In the next section we will consider faithful transformations and work with a generic ultrafilter $U_g$.

\subsection{Random extensions of set theory and connections to Framework K}\label{sec:set}

The hard task in forcing in bounded arithmetic is to compute that the inductions scheme holds for a particular class of formulas, while a given principle which holds in true arithmetic is false (such as $\ontoFPHP$ from Section \ref{sec:32}). The methods for achieving this are specific to bounded arithmetic. However, the general setup of forcing allows one to view some arguments in bounded arithmetic from a more general perspective of set-theoretic forcing and discuss model-theoretic properties of the resulting models $\UKmG$.

As we will review in Section \ref{sec:Maharam}, the complete Boolean algebra $\B$ in Section \ref{sec:32} is isomorphic to a well-known algebra used extensively in set-theoretic forcing (the `random' algebra). This means that the models $\UKmG$ exist in standard well-explored models of set theory $V[U_g]$ and can be analyzed there.

In Section \ref{sec:order} we illustrate the analysis by determining some properties of the linear order $\LUKmG$.

\subsubsection{Uniqueness of the probability measure algebra $\B$ and the random integer}\label{sec:Maharam}

As we already mentioned in Section \ref{sec:KAM}, the algebra from framework K is isomorphic to the random algebra $\Bk$ for some uncountable $\kappa$. Let us mention some comparison between the use of $\Bk$ in set theory and in arithmetic. We will focus on $\Bi$. This algebra is well-known in set theory and is used to add $\aleph_1$-many new reals with certain specific properties (quite different from Cohen reals added by the Cohen forcing). \Kr \cite{krajicek11} uses $\Bi$ and a novel way:

\begin{itemize}
\item
In set theory, $\Bi$ is typically used as an iteration of length $\aleph_1$ of the forcing $\Bo$,\footnote{It is known that for every $\omega\alpha < \aleph_1$, $\Bi$ is forcing-equivalent to the iteration $\B_{\omega\alpha} * \dot{\B}_{[\omega\alpha,\aleph_1)}$ (actually, a more general fact is true; but we will not go into details here, see Fremlin \cite[552P]{fremlin5} for more details).} a probability algebra on $2^\omega$ which is isomorphic to forcing Borel subsets of the unit interval on the reals of positive measure. In this interpretation, $\Bi$ adds $\aleph_1$-many new ``random'' reals, with the $\alpha$-th real added by the $\alpha$-th copy of $\Bo$: The generic object for $\Bo$ can be identified with a Cauchy sequence of rationals converging to a new real.

Coincidentally, this is the way Scott \cite{scott67} used $\B_{\omega_2}$ (defined as $\Bi$ on the space $2^{\omega_2}$) to violate the Continuum Hypothesis.
\item
In bounded arithmetic,  the whole algebra $\Bi$ is used to add one new integer. In this reading, adding a new non-standard integer requires a more complex algebra than adding a new standard real.

The claim that $\Bi$ adds a single new integer should be qualified as follows: 
In the notation of Theorem \ref{th:ultra}, $(\M,<)$ can be identified with a suborder of $(\UltMMaxG,<) = \LUKmaxG$, where  integers in $\UKmaxG - \M$ can be viewed as ``generic''. One of these generic integers has the elegant description as the equivalence class of the identify function $\idU$: we call $\idU$ the ``random integer''.\footnote{The name a ``random integer'' is motivated by the fact that $\idU$ defines the whole generic ultrafilter $U_g$ (see Theorem \ref{th:int}), and hence $\Bi$ can be viewed as adding as its generic object the integer $\idU$. Another reason is the analogy with the ``random real" in set theory (which can be identified with $\idU$ as well, where $U_g$ is  now $B_\omega$-generic, essentially by the same argument as in Theorem \ref{th:int}).}

Since the measure on $\M$ does not lead to a complete metric, it is not the case that the generic object for $\Bi$ converges to  $\idU$---the interpretation of the random integer $\idU$ is more complicated: for every $m \in \M$ which is infinitesimally small, the whole interval $(\idU - m, \idU + m)$ is composed of generic integers (see the next Section \ref{sec:order} for more details).
\end{itemize}

One can show that the random integer $\idU$ codes the whole generic filter $U_g$:

\begin{theorem} \label{th:int}
Suppose $V$ is a transitive model of set theory satisfying $\CH$ and $U_g$ is $\Bi$-generic over $V$. Then:
$$V[U_g] = V[\idU],$$
where $V[\idU]$ is the least transitive model of set theory which has the same ordinals as $V$ and contains $\idU$.
\end{theorem}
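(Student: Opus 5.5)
The plan is to prove both inclusions directly, treating $\idU$ as the set-theoretic object it literally is. By Definition \ref{def:ultrap}, applied to $\XiM$ and $U_g$ (viewed as an ultrafilter on the internal subsets of $\Omega$ via the footnote there),
$$\idU=\set{\alpha\in\XiM}{\set{\omega\in\Omega}{\alpha(\omega)=\omega}\in U_g}.$$
This is a subset of $\XiM\subseteq\M$, and $\M$, $\Omega$, $\Ic$, $\B\cong\Bi$ all lie in $V$. The inclusion $V[\idU]\subseteq V[U_g]$ is then immediate: $\idU$ is defined in $V[U_g]$ from $U_g$ and the parameters $\XiM,\Omega\in V$. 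Since $V[U_g]$ is a transitive model with the same ordinals as $V$ containing $\idU$, minimality of $V[\idU]$ gives the inclusion.

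The substantive direction is $U_g\in V[\idU]$, i.e.\ decoding the ultrafilter from the single equivalence class. I would do this by encoding each internal set as a random variable whose agreement set with the identity is exactly that set. Work internally in $\M$ and fix a fixed-point-free internal permutation $s$ of $\Omega$. For instance, identify $\Omega$ with $\{0,\dots,|\Omega|-1\}$ via an internal bijection and let $s$ be the cyclic shift $\omega\mapsto\omega+1 \bmod |\Omega|$; this works because $|\Omega|\ge 2$, as $\Omega$ is infinite. For each internal $A\subseteq\Omega$ (that is, $A\in\A$) define internally
$$\alpha_A(\omega)=\begin{cases}\omega & \omega\in A,\\ s(\omega) & \omega\notin A.\end{cases}$$
Then $\alpha_A$ is coded in $\M$, is total on $\Omega$, and so belongs to the maximal family $\XiM$. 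Moreover $\set{\omega}{\alpha_A(\omega)=\omega}=A$ exactly, since $s$ has no fixed points.

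Consequently, for every $A\in\A$ we get $[A]_\Ic\in U_g\iff \alpha_A\in\idU$. Hence
$$U_g=\set{[A]_\Ic}{A\in\A,\ \alpha_A\in\idU}.$$
Here the map $A\mapsto\alpha_A$ and the quotient map $A\mapsto[A]_\Ic$ lie in $V$, so this is a definition by separation in $V[\idU]$ from $\idU$ and parameters in $V$. Thus $U_g\in V[\idU]$. Since $V[U_g]$ is the least transitive model with the ordinals of $V$ containing $V$ and $U_g$, we get $V[U_g]\subseteq V[\idU]$, which combined with the first inclusion yields the theorem.

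The only point needing care, which I expect to be the main (if modest) obstacle, is making sure the encoding is genuinely internal and lands in $\XiM$, and that "$V[\idU]$" contains $V$. For the first, $s$, $A$ and hence $\alpha_A$ are all in $\M$. For the second, I would note that $V[\idU]$ is understood as $L(V\cup\{\idU\})$-style closure, or equivalently use that $\idU\subseteq\M\in V$ so $V[\idU]$ is the usual extension by a subset of a ground set. The hypothesis $\CH$ is only used to identify $\B$ with $\Bi$ (Section \ref{sec:KAM}); the decoding argument itself does not depend on it.
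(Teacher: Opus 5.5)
Your proof is correct and follows the paper's argument: decode $U_g$ from $\idU$ by asking, for each internal $A\subseteq\Omega$, whether a random variable that agrees with the identity exactly on $A$ belongs to $\idU$. The difference is only cosmetic. The paper takes $\alpha^A$ to be the identity on $A$ and $0$ elsewhere, so its agreement set with $\mathrm{id}$ is $A$ only modulo $\Ic$ (up to the single point $0$), whereas your fixed-point-free shift makes it exactly $A$. You also spell out two things the paper leaves implicit: the easy inclusion $V[\idU]\subseteq V[U_g]$, and the necessary reading of $V[\idU]$ as containing $V$.
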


\begin{proof}
This is Theorem 5.22 in \cite{RH:bounded}, but let us give a self-contained proof using a probabilistic terminology which is more in line with the motivation behind \cite{krajicek11}. The forcing $\Bi$ is equivalently described as forcing with subsets $A \sub \Omega$ of positive measure (Definition \ref{defibooleanalgebra}), and therefore $U_g$ is a generic ultrafilter consisting of certain subsets of $\Omega$. Two random variables $\alpha, \beta$ are identified if the probability (a Boolean-value in $\Bi$) of $\alpha= \beta$ is in $U_g$, i.e., $$\set{\omega \in \Omega}{\alpha(\omega) = \beta(\omega)} \in U_g.$$ Thus if we know $\idU$ and we are given $A \sub \Omega$, we can define a random variable $\alpha^A$ which is the identical function on $A$ and 0 otherwise. Then we can conclude that $A \in U_g$ if and only the probability of $id = \alpha^A$ is in $U_g$; but the last claim is equivalent to $\alpha^A \in \idU$, and hence the whole ultrafilter $U_g$ is definable in $V[\idU]$ as required.
\end{proof}

\subsubsection{Properties of the linear orders $\LUKmG$} \label{sec:order}

To illustrate some properties of the models $\UKmG$, we prove two facts related to the linear order on $\UKmG$. Since $\M$ is saturated, it follows that the linear order is \emph{quasi-$\aleph_1$-dense}: for $m < n$ in $\M$, if the interval $(m,n)=\{i\in \Mc; n< i < m\}$ is externally infinite, it contains externally $\aleph_1$-many integers. If the family $F$ of random integers contains constant functions $\alpha_m: \Omega \to \{m\}$ for every $m\in \M$, the linear order $(\M,<)$ can be identified with a suborder of $\LUKmG$. It is natural to ask whether $\UKmG$ is \emph{quasi-dense} in $\M$, i.e., whether for every $m,n\in M$ with $(m,n)$ externally infinite there is $\alpha \in F$ such that $[\alpha]_{U_g}$ is generic and $m < [\alpha]_{U_g} < n$ in $\LUKmG$, and conversely. 

\begin{theorem}\label{th:q}
With the notation as above:
\bce[(i)]
\item Suppose $m,n \in \M$ and the interval $(m,n)$ is externally infinite. Then there is $\alpha \in \XiM$ with $[\alpha]_{U_g} \not \in \M$ such that $$m < [\alpha]_{U_g} < n$$ in $\LUKmaxG$. It follows that $\UKmaxG$  is quasi-dense in $\M$. 
\item We say that $m \in \M$ is infinitesimally small if $[0,m]$ has measure zero. Then for every infinitesimally small $m \in \M$, the interval $$(\idU - m,\idU + m)$$ is disjoint from $\M$, i.e., it is composed only from generic (new) integers. It follows that $\M$ is not quasi-dense in $\UKmG$ (for any $F$ containing all constant functions).
\ece
\end{theorem}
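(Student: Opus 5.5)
For (i), we work inside $\UltMMaxG$ and use the Łoś theorem (Theorem \ref{th:ultra}). By Łoś, $[\alpha]_{U_g}$ equals the image of a constant $k \in \M$ exactly when $\set{\omega}{\alpha(\omega)=k}/\Ic \in U_g$. So the task is to find an internal $\alpha:\Omega\to(m,n)$ whose level sets all have measure zero. Genericity then puts $[\alpha]_{U_g}$ outside $\M$.

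We will choose $\alpha$ to be an internal injection, or nearly so. Since $(m,n)$ is externally infinite, $n-m$ is nonstandard.

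If $n-m \ge \abs{\Omega}$, we take $\alpha$ injective into $(m,n)$. Then each fibre has size at most $1$, hence probability $1/\abs{\Omega}$, which is infinitesimal.

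Otherwise we let $\alpha$ split $\Omega$ into $n-m-1$ nearly equal blocks, one for each point of $(m,n)$. Each block then has probability about $1/(n-m)$, again infinitesimal. Every fibre $[\alpha=k]$ is therefore in $\Ic$, i.e.\ it is $\0$ in $\B$.

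For each $k$ the set $D_k=\set{b}{b\le \bv{\alpha\neq k}}$ is dense, since $\bv{\alpha\ne k}=\1$. Hence $U_g$ avoids $\bv{\alpha=k}$, and this already holds for every ultrafilter. Thus $[\alpha]_{U_g}\notin\M$. Since $\bv{m<\alpha<n}=\1$, Łoś gives $m<[\alpha]_{U_g}<n$. Quasi-density then follows directly from this.

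For (ii), fix an infinitesimally small $m$ and $k\in\M$. We must show $\abs{\idU-k}\ge m$ in the ultrapower. By Łoś, $\abs{\idU - k}<m$ holds iff the internal set $\set{\omega}{\abs{\omega-k}<m}$ lies in $U_g$.

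This set is contained in the translate $(k-m,k+m)\cap\Omega$, which has at most $2m$ elements. Its probability is then at most $2m/\abs{\Omega}$. This is twice the measure of $[0,m]$, so it is measure zero.

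Here we identify $\Omega$ with an interval of $\M$ of the form $[0,\abs\Omega)$, as the statement about $[0,m]$ presupposes. Measure-zero sets are $\0$ in $\B$, so they are never in an ultrafilter. Hence no standard $k$ lies in $(\idU-m,\idU+m)$.

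For the non-quasi-density consequence, take $m$ nonstandard but infinitesimally small, for instance $m=\lfloor\sqrt{\abs\Omega}\rfloor$. The interval around $\idU$ is then externally infinite, since it contains $\idU\pm j$ for all standard $j$, yet it contains no element of $\M$. The step $\idU\pm j\in\UKmG$ uses that $F$ contains constants and is $L$-closed, and that $\idU\in\UKmG$.

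The main obstacle is this last point. It is the only part requiring attention to the bookkeeping: that the identity $id$ belongs to $F$, that the endpoints are expressible, and that the notion of the measure of $[0,m]$ is normalised by $\abs{\Omega}$. I would simply assume $id\in F$, as the statement implicitly does.
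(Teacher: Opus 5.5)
Your proposal is correct and follows the paper's proof. For (i) you internally partition $\Omega$ into blocks sent to distinct points of $(m,n)$ so that every fibre is null; your case split on whether $n-m\ge\abs{\Omega}$ replaces the paper's preliminary shrinking of $(m,n)$ to an infinitesimally small subinterval. For (ii) you bound $\set{\omega\in\Omega}{\abs{\omega-k}<m}$ by an interval of size $2m$, exactly as the paper does, and your remark that the final non-quasi-density claim tacitly needs $id\in F$ (not just the constant functions) is a fair point the paper glosses over.
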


\begin{proof}
We will give a brief self-contained proof, with a ``probabilistic'' terminology (compare with Theorems 5.25 and 5.28(ii) in \cite{RH:bounded} where the arguments are formulated in the forcing terminology).

\textbf{Regarding (i)}: Let us assume that $(m,n)$ is infinitesimally small (if not, choose a subinterval of $(m,n)$ which is infinitesimally small). Denote $p = |(m,n)|$. Since $p$ is infinitesimally small,  $$|\Omega| = p \cdot l + r,$$ for some non-standard infinitesimally small $l$ and some remainder $r$ (for simplicity assume $r = 0$). 

It follows that $\Omega$ can be internally partitioned into a sequence $\seq{I_i}{i<p}$ of intervals of size $l$. Choose internally a sequence $\seq{m_i}{i<p}$ of distinct integers in the interval $(m,n)$. Let $\alpha$ be a random variable which assigns to every element of $I_i$ the integer $m_i$. Then with probability 1, $[\alpha]_{U_g} \in (m,n)$, yet probability of $\alpha$ being identical to $\bar{m}$ for any $\bar{m} \in (m,n)$ is 0. It follows $\alpha$ is as required. 

\textbf{Regarding (ii)}: Let $m$ be fixed. We want to argue that if $$\aU \in (\idU - m, \idU +m) \iff \set{\omega \in\Omega}{\omega - m < \alpha(\omega) < \omega +m} \in U_g,$$ then the probability of $\alpha$ being identical to any $m^* \in \M$ is $0$. This follows easily: for any fixed $m^*$, the set $\set{\omega \in \Omega}{\omega - m < m^* < \omega +m}$ is contained in the interval $(m^* - m, m^* + m)$ of size $2m$, which is an infinitesimally small number.
\end{proof}

One can check that the $\alpha$ constructed in the proof of (i) also exists in $\UKmPHP$ and can be computed by a depth 1 tree querying where a single pigeon is mapped.

We note that Theorem \ref{th:q}(ii) can be generalized to more random variables (see Theorem 5.28 in \cite{RH:bounded}).

\section{Open questions}\label{sec:open}

\Kr \cite{krajicek11} studies various theories (e.g., open comprehension, $V^0_1$, $Q_2V^0_1$, etc.), verifying the induction/comprehension axioms for a specific class of formulas by variants of quantifier elimination. It would be interesting to see whether there is a definability criterion  for the validity of comprehension in models $K(F,G)$. Since Krajíček's forcing introduces new integers in the number-sort it is not conservative and thus we cannot directly use our inclusion of Krajíček's forcing in the framework of~\AMno.

\begin{problem}
    Does a variant of the Principal theorem (Theorem~\ref{principal theorem}) hold for Krajíček's forcing with random variables?
\end{problem}

We saw in Theorem \ref{pvcomplete} that every countable model of $T_{PV}$ is isomorphic to a restricted ultrapower. Thus all non-provability results for $T_{PV}$ can be obtained by restricted ultrapowers. Is \Krno's forcing similarly complete for theories $V^0_1$ or $V^1_1$? 

\begin{problem}
    Is Krajíček's forcing complete for theories $V^0_1$ or $V^1_1$? 
\end{problem}

The final and the most general problem is the broadening of our knowledge of useful forcing relations in the context of arithmetic, as the current results are quite limited compared to the situation in set theory. It seems that asking for interesting models of stronger theories is currently beyond our reach unless there is a fundamental breakthrough in our understanding. We feel that a more achievable progress is obtaining new unprovability results for $V^0_1$ and its subsystems. 

\begin{problem}
    Can the analogies with set theory, in particular as regards better understanding of uncountable partial orders, be used to find good candidates for new forcing arguments for $V^0_1$ or any of its subsystems?
\end{problem}

\bibliographystyle{plain}

\end{document}